\numberwithin{equation}{section}
\newtheorem{thm}{Theorem}
\newtheorem{thmbis}{Theorem}
\newtheorem{proposition}{Proposition}[section]
\newtheorem{corollary}[proposition]{Corollary}
\newtheorem{lemma}[proposition]{Lemma}
\newtheorem{property}[proposition]{Property}
\newtheorem*{strongMarkov}{Strong Markov property}
\begin{document}

\title[Loop clusters, random interlacements and GFF]{From loop clusters and random interlacements to the free field}
\author{Titus Lupu}
\address{Laboratoire de Mathématiques, Université Paris-Sud, Orsay}
\email{titus.lupu@math.u-psud.fr}
\thanks{This research was funded by the PhD contract \textnumero
2013-125 with Université Paris-Sud, Orsay.}
\subjclass[2010]{60K35, 60G15, 60G60, 60J25}
\keywords{Gaussian free field, loop soup, Poisson ensemble of Markov loops, percolation by loops,  random interlacements}

\begin{abstract}
It was shown by Le Jan that the occupation field of a Poisson ensemble of Markov loops ("loop soup") of parameter $\frac{1}{2}$ associated to a transient symmetric Markov jump process on a network is half the square of the Gaussian free field on this network. We construct a coupling between these loops and the free field such that an additional constraint holds: the sign of the free field is constant on each cluster of loops. As a consequence of our coupling we deduce that the loop clusters of parameter $\frac{1}{2}$ do not percolate on periodic lattices. We also construct a coupling between the random interlacement on $\mathbb{Z}^{d}$, $d\geq 3$, introduced by Sznitman, and the Gaussian free field on the lattice such that the set of vertices visited by the interlacement is contained in a one-sided level set of the free field. We deduce an inequality between the critical level for the percolation by level sets of the free field and the critical parameter for the percolation of the vacant set of the random interlacement. 
\end{abstract}

\maketitle

\section{Introduction}
\label{SecInro}

Here we introduce our framework, some notations, state our main results and outline the layout of the paper.

We consider a connected undirected graph $\mathcal{G}=(V,E)$ where the set of vertices $V$ is at most countable and every vertex has finite degree. We do not allow multiple edges nor loops from a vertex to itself. The edges are endowed with positive conductances $(C(e))_{e\in E}$ and vertices endowed with a non-negative killing measure 
$(\kappa(x))_{x\in V}$. $\kappa$ may be uniformly zero. 
$(X_{t})_{0\leq t<\zeta}$ is a continuous-time sub-Markovian jump process on $V$. Given two neighbouring vertices $x$ and $y$, the transition rate from $x$ to $y$ equals the conductance $C(x,y)$. Moreover there is a transition rate $\kappa(x)$ from $x\in V$ to a cemetery point outside $V$. Once such a transition occurs, the process $X$ is considered to be killed. Moreover we allow $X$ to blow up in finite time, i.e. leave all finite sets. $\zeta$ is either $+\infty$ or the first time $X$ gets killed or blows up. We assume that $X$ is transient, which is a condition on $C$ and $\kappa$. 
In particular if $\kappa$ is not uniformly zero, $X$ is transient. 
$(G(x,y))_{x,y\in V}$ denotes the Green's function of $X$,
\begin{displaymath}
G(x,y)=\mathbb{E}_{x}\bigg[\int_{0}^{\zeta}1_{X_{t}=y} dt\bigg],
\end{displaymath}
and $G$ is symmetric.

Let $\left(\mathbb{P}^{t}_{x,y}(\cdot)\right)_{x,y\in V, t>0}$
be the bridge probability measures of $X$, conditioned on $\zeta >t$ and let $(p_{t}(x,y))_{x,y\in V, t\geq 0}$ be the transition probabilities of $X$. The measure $\mu$ on time-parametrized loops associated to $X$ is, as defined in \cite{LeJan2011Loops},
\begin{equation}
\label{IntroDefmu}
\mu(\cdot)=\sum_{x\in V}\int_{0}^{+\infty}
\mathbb{P}^{t}_{x,x}(\cdot)p_{t}(x,x)\dfrac{dt}{t} .
\end{equation}
Let $\alpha >0$. $\mathcal{L}_{\alpha}$ is defined to be the Poisson point process in the space of loops on $\mathcal{G}$ with intensity $\alpha\mu$. It is sometimes called loop-soup of parameter $\alpha$. The occupation field $(\widehat{\mathcal{L}}^{x}_{\alpha})_{x\in V}$ of $\mathcal{\mathcal{L}}_{\alpha}$ is
\begin{displaymath}
\widehat{\mathcal{L}}^{x}_{\alpha}
=\sum_{\gamma\in\mathcal{L}_{\alpha}}
\int_{0}^{T(\gamma)}1_{\gamma(t)=x}dt,
\end{displaymath}
where $T(\gamma)$ is the duration of the loop $\gamma$.
The loops of $\mathcal{L}_{\alpha}$ may be partitioned into clusters: if $\gamma,\gamma'\in\mathcal{L}_{\alpha}$ belong to the same cluster if there is a chain $\gamma_{0},\dots,\gamma_{n}$ of loops in $\mathcal{L}_{\alpha}$ such that $\gamma_{0}=\gamma$, $\gamma_{n}=\gamma'$ and for all $i\in \lbrace 1,\dots,n\rbrace$
$\gamma_{i-1}$ and $\gamma_{i}$ visit a common vertex
(\cite{LeJanLemaire2012LoopClusters}). A cluster $\mathcal{C}$ is a set of loops, but it also induces a sub-graph of 
$\mathcal{G}$. Its vertices are the vertices of $\mathcal{G}$ visited by at least one loop in $\mathcal{C}$ and its edges are those that join two consecutive points of a loop in $\mathcal{C}$. Therefore we will also consider $\mathcal{C}$ as a subset of vertices and a subset of edges and use the notations $\gamma\in\mathcal{C}$, $x\in\mathcal{C}$ and
$e\in\mathcal{C}$ where $\gamma$ is a loop, $x$ is a vertex and $e$ is an edge. $\mathfrak{C}_{\alpha}$ will be the random set of all clusters of $\mathcal{L}_{\alpha}$. It induces a partition of $V$.

Let $(\phi_{x})_{x\in V}$ be the Gaussian free field on $\mathcal{G}$, i.e. the mean-zero Gaussian field with 
$\mathbb{E}[\phi_{x}\phi_{y}]=G(x,y)$. In \cite{LeJan2011Loops}, Section $5$, Le Jan showed that at intensity parameter
$\alpha=\frac{1}{2}$ the occupation field 
$(\widehat{\mathcal{L}}^{x}_{1/2})_{x\in V}$ has the same law as
$(\frac{1}{2}\phi^{2}_{x})_{x\in V}$. This equality in law may be seen as an extension of Dynkin's isomorphism (\cite{Dynkin1984Isomorphism},
\cite{Dynkin1984IsomorphismPresentation}) and in turn enables an alternative derivation of some version of Dynkin's isomorphism through the use of Palm's identity for Poisson point processes (\cite{LeJanMarcusRosen2012Loops},\cite{FitzsimmonsRosen2012LoopsIsomorphism} and \cite{Lupu20131dimLoops}, Section $4.3$). However the question of relating the sign of $\phi$ to
$\mathcal{L}_{1/2}$ remained open. In this paper we show the following:

\begin{thm}
\label{ThmCoupling}
There is a coupling between the Poisson ensemble of loops 
$\mathcal{L}_{1/2}$ and the Gaussian free field $\phi$ such that the 
following two constraints hold:
\begin{itemize}
\item for all $x\in V$, 
$\widehat{\mathcal{L}}^{x}_{1/2}=\frac{1}{2}\phi_{x}^{2}$;
\item for all $\mathcal{C}\in\mathfrak{C}_{1/2}$ the sign of $\phi$ is constant on the vertices of $\mathcal{C}$.
\end{itemize}
\end{thm}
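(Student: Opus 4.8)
The plan is to lift the whole picture to the \emph{metric graph} (cable system) $\widetilde{\mathcal{G}}$ associated to $\mathcal{G}$: replace each edge $e=\{x,y\}\in E$ by a real segment $I_{e}$ whose length is inversely proportional to $C(x,y)$, and run on $\widetilde{\mathcal{G}}$ a Brownian motion, with killing encoding $\kappa$, chosen so that its trace on $V$ is exactly the jump process $X$. On $\widetilde{\mathcal{G}}$ one disposes of a metric-graph Gaussian free field $\widetilde{\phi}$, which is a \emph{continuous} function on $\widetilde{\mathcal{G}}$ restricting to $\phi$ on $V$ and which, conditionally on its values at the two endpoints of an edge, is an independent Brownian bridge along that edge; and of a metric-graph loop soup $\widetilde{\mathcal{L}}_{\frac{1}{2}}$ at intensity one-half, whose projection to $\mathcal{G}$ is $\mathcal{L}_{\frac{1}{2}}$, and whose occupation field $\widehat{\widetilde{\mathcal{L}}}_{\frac{1}{2}}$ restricts on $V$, via local times, to $\widehat{\mathcal{L}}_{\frac{1}{2}}$. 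The gain of working on $\widetilde{\mathcal{G}}$ is that $\widetilde{\phi}$ is continuous, so by the intermediate value principle its sign is constant on each connected component of $\widetilde{\mathcal{G}}\setminus\{\widetilde{\phi}=0\}$; moreover any discrete cluster $\mathcal{C}\in\mathfrak{C}_{\frac{1}{2}}$, being a chain of loops sharing vertices, lifts to a connected subset of the trace $\{\widehat{\widetilde{\mathcal{L}}}_{\frac{1}{2}}>0\}$ of $\widetilde{\mathcal{L}}_{\frac{1}{2}}$.

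The core step is the metric-graph refinement of Le Jan's isomorphism: there is a coupling of $\widetilde{\mathcal{L}}_{\frac{1}{2}}$ and $\widetilde{\phi}$ such that $\widehat{\widetilde{\mathcal{L}}}_{\frac{1}{2}}=\tfrac{1}{2}\widetilde{\phi}^{2}$ on $\widetilde{\mathcal{G}}$ \emph{and} the connected components of the trace of $\widetilde{\mathcal{L}}_{\frac{1}{2}}$ coincide with the connected components of $\{\widetilde{\phi}\neq 0\}$; in particular $\widetilde{\phi}$ has constant sign on each loop cluster. Granting this, Theorem \ref{ThmCoupling} follows by restriction to $V$: the projection of $\widetilde{\mathcal{L}}_{\frac{1}{2}}$ is $\mathcal{L}_{\frac{1}{2}}$, the restriction of $\widetilde{\phi}$ is $\phi$, the occupation-field identity on $V$ is the restriction of $\widehat{\widetilde{\mathcal{L}}}_{\frac{1}{2}}=\tfrac{1}{2}\widetilde{\phi}^{2}$, and any $\mathcal{C}\in\mathfrak{C}_{\frac{1}{2}}$ sits inside a single trace component of $\widetilde{\mathcal{L}}_{\frac{1}{2}}$, on which $\widetilde{\phi}$, hence $\phi$, is of constant sign.

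To prove the metric-graph statement I would use the spatial Markov property of $\widetilde{\phi}$ and the restriction property of the loop soup to condition on the configuration at the vertices $V$ (the values $|\widetilde{\phi}|$ there, equivalently the occupation times of $\widetilde{\mathcal{L}}_{\frac{1}{2}}$, together with the combinatorics of which bridges and loops meet at each vertex), after which the edges become conditionally independent. On a single edge $I_{e}\cong[0,r]$ with fixed endpoint data this is a one-dimensional identity: conditionally on its endpoint values the restriction of $\widetilde{\phi}$ to $I_{e}$ is a Brownian bridge, whose square is, up to the factor $\tfrac12$, the occupation field of the one-dimensional loop soup inside the segment together with the excursions into $I_{e}$ of the vertex-loops, by a Ray--Knight / Le Jan type identity on the interval; and the sign input is the classical fact that a reflected Brownian bridge becomes a Brownian bridge by attaching an independent fair sign to each of its excursions away from $0$, which matches the absence of any intrinsic sign on the loop-soup side. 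Reassembling, one constructs $\widetilde{\phi}$ directly from $\widetilde{\mathcal{L}}_{\frac{1}{2}}$ by setting $|\widetilde{\phi}|=\sqrt{2\,\widehat{\widetilde{\mathcal{L}}}_{\frac{1}{2}}}$ and assigning an independent fair sign to each connected component of $\{|\widetilde{\phi}|>0\}$ — i.e.\ to each loop cluster — and checks that the field so obtained is the metric-graph free field.

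The main obstacle is this metric-graph isomorphism, and inside it the single-edge analysis: one must show that the event that a Brownian bridge between two endpoint values of the \emph{same} sign nevertheless hits $0$ (so that the edge is cut and its two endpoints may receive different signs) matches exactly, with the right conditional law, the event that no cluster of $\widetilde{\mathcal{L}}_{\frac{1}{2}}$ crosses the edge; this calls for careful excursion theory of Brownian bridges together with the description of the one-dimensional loop soup, and then for a gluing argument ensuring that the independent signs chosen edge by edge do assemble into one globally well-defined sign on each connected cluster of $\widetilde{\mathcal{L}}_{\frac{1}{2}}$. Minor additional care is needed near infinity, handled by exhausting $\mathcal{G}$ by finite subgraphs and passing to the limit using transience.
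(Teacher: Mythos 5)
Your overall route is the paper's: lift to the metric graph $\widetilde{\mathcal{G}}$, couple the metric-graph loop soup with a continuous free field so that the occupation field equals $\frac{1}{2}\phi^{2}$ and the sign clusters match the loop clusters, then restrict to $V$ (this is exactly Proposition \ref{PropCouplingCont} and the deduction of Theorem \ref{ThmCoupling} from it). For the core metric-graph statement, your edge-by-edge plan (condition on the data at the vertices, analyse on each unvisited edge the zero set of a squared bridge plus two squared Bessel-$0$ processes, attach independent fair signs per component, exhaust by finite subgraphs) is essentially what the paper carries out in sections \ref{SecAltDesc}--\ref{SecAltProof} (Property \ref{PropertyIndep}, Lemmas \ref{LemProbInt}--\ref{LemIntegral}, and Lemma \ref{LemSignGFF}, the latter proved via the strong Markov property of the field and a finite-graph limit), so that part of the strategy is sound, though the paper's primary proof instead quotes the metric-graph isomorphism of \cite{FitzsimmonsRosen2012LoopsIsomorphism} and only uses the edge computations for the alternative description.

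The genuine gap is that you treat the occupation field of $\widetilde{\mathcal{L}}_{\frac{1}{2}}$ as an obviously continuous function on $\widetilde{\mathcal{G}}$. There are infinitely many loops above every point, and the fact that the field $(\widehat{\mathcal{L}}^{y}_{\frac{1}{2}})_{y\in\widetilde{\mathcal{G}}}$ admits a continuous version in law does not imply that an actual realisation as the sum of the loops' local times can be made continuous; the paper points to a counterexample and devotes Lemma \ref{LemContOccupField} to this, splitting the loops into those visiting at least two vertices, exactly one vertex, or none, and invoking the one-dimensional results of \cite{Lupu20131dimLoops} for the last class — calling this point ``all the subtlety'' of the lemma. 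Without it, setting $\vert\widetilde{\phi}\vert=\sqrt{2\widehat{\mathcal{L}}_{\frac{1}{2}}}$ and invoking the intermediate value principle is not justified. A second, smaller omission: your asserted identity ``components of the trace of $\widetilde{\mathcal{L}}_{\frac{1}{2}}$ coincide with components of $\lbrace\widetilde{\phi}\neq 0\rbrace$'' needs an argument, since a single loop's occupation field vanishes at the extreme points of its range; the paper's Lemma \ref{LemClustPosComp} handles this with a Palm-formula argument showing such boundary points are a.s. covered by interiors of other loops' ranges. For Theorem \ref{ThmCoupling} itself only the easier inclusion (positivity of the occupation field at visited vertices and along traversed closed edges) is needed, but you should say which inclusion you use and justify it.
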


In Section \ref{SecCouplComplex} we will construct the coupling that satisfies the constraints of Theorem \ref{ThmCoupling}. To this end we will introduce the metric graph $\widetilde{\mathcal{G}}$ associated to the graph $\mathcal{G}$ and interpolate the loops in
$\mathcal{L}_{1/2}$ by continuous loops on $\widetilde{\mathcal{G}}$. In Section \ref{SecAltDesc} we will show that the same coupling can be described without using the metric graph $\widetilde{\mathcal{G}}$ and the interpolation of loops:

\begin{thmbis}
\label{ThmAltDescCoupl}
\textbf{\emph{bis}}.
Consider the following construction:
\begin{itemize}
\item First sample the Poisson ensemble of loops 
$\mathcal{L}_{1/2}$ with $(\widehat{\mathcal{L}}^{x}_{1/2})_{x\in V}$ being its occupation field and 
$\mathfrak{C}_{1/2}$ the set of its clusters.
\item For any edge $\lbrace x,y\rbrace$ not visited by any loop in 
$\mathcal{L}_{1/2}$, choose to open it with probability 
$1-\exp\Big(-2C(x,y)\sqrt{\widehat{\mathcal{L}}^{x}_{1/2}
\widehat{\mathcal{L}}^{y}_{1/2}}\Big)$. By doing so some cluster of $\mathfrak{C}_{1/2}$ may merge and this induces a partition 
$\mathfrak{C}'$ of $V$ in larger clusters.
\item For all clusters $\mathcal{C}'\in\mathfrak{C}'$ sample independent uniformly distributed in $\lbrace -1,+1\rbrace$ r.v.'s 
$\sigma(\mathcal{C}')$.
\item Set $\phi_{x}:=\sigma(\mathcal{C}'(x))
\sqrt{2\widehat{\mathcal{L}}^{x}_{1/2}}$ where $\mathcal{C}'(x)$ is the cluster in $\mathfrak{C}'$ containing the vertex $x$.
\end{itemize}
$(\phi_{x})_{x\in V}$ is then a Gaussian free field on $\mathcal{G}$. Moreover the obtained coupling between $\mathcal{L}_{1/2}$ and 
$\phi$ is the same, in law, as the one constructed in Section \ref{SecCouplComplex}.
\end{thmbis}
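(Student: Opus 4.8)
The plan is to realise the coupling of section \ref{SecCouplComplex} on the metric graph and then read it off at the level of $\mathcal{G}$. Recall that that coupling produces, on $\widetilde{\mathcal{G}}$, a loop soup $\widetilde{\mathcal{L}}_{\frac{1}{2}}$ and the metric‑graph free field $\widetilde{\phi}$, jointly such that the occupation field of $\widetilde{\mathcal{L}}_{\frac{1}{2}}$ is $\frac{1}{2}\widetilde{\phi}^{2}$ and, by the intermediate value property on $\widetilde{\mathcal{G}}$, such that $\widetilde{\phi}$ has constant sign on each connected component (cluster) of $\widetilde{\mathcal{L}}_{\frac{1}{2}}$; restricting everything to $V$ yields the pair $(\mathcal{L}_{\frac{1}{2}},\phi)$ of Theorem \ref{ThmCoupling}. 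Two facts from the metric‑graph formalism will be used. First, the vertex traces of the loops of $\widetilde{\mathcal{L}}_{\frac{1}{2}}$ form exactly the discrete loop soup $\mathcal{L}_{\frac{1}{2}}$, and the metric occupation field evaluated at a vertex equals the discrete one, so $\widehat{\mathcal{L}}^{x}_{\frac{1}{2}}=\frac{1}{2}\phi_{x}^{2}$ for all $x\in V$. Second — and this is the geometric content — the partition of $V$ induced by the clusters of $\widetilde{\mathcal{L}}_{\frac{1}{2}}$ is obtained from $\mathfrak{C}_{\frac{1}{2}}$ by declaring two of its classes merged whenever they are joined by an edge $\{x,y\}$ on whose interior $\widetilde{\phi}$ does not vanish; an edge already visited by $\mathcal{L}_{\frac{1}{2}}$ joins its endpoints inside $\mathfrak{C}_{\frac{1}{2}}$ anyway, so only unvisited edges are relevant. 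Call $\mathfrak{C}'$ this partition of $V$. It therefore suffices to show that, conditionally on $\mathcal{L}_{\frac{1}{2}}$, the random set of unvisited edges on whose interior $\widetilde{\phi}$ does not vanish is a family of independent Bernoulli variables with parameters $1-\exp\!\big(-2C(x,y)\sqrt{\widehat{\mathcal{L}}^{x}_{\frac{1}{2}}\widehat{\mathcal{L}}^{y}_{\frac{1}{2}}}\big)$, and that conditionally on $(\mathcal{L}_{\frac{1}{2}},\mathfrak{C}')$ the signs of $\phi$ are i.i.d.\ uniform over the classes of $\mathfrak{C}'$.

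The statement about signs is the soft part. In the construction of section \ref{SecCouplComplex} the sign of $\widetilde{\phi}$ on a cluster of $\widetilde{\mathcal{L}}_{\frac{1}{2}}$ is an independent fair coin, so conditionally on $\widetilde{\mathcal{L}}_{\frac{1}{2}}$ the cluster signs are i.i.d.\ uniform in $\{-1,+1\}$; since this conditional law depends on $\widetilde{\mathcal{L}}_{\frac{1}{2}}$ only through the cluster partition, which is a function of the pair $(\mathcal{L}_{\frac{1}{2}},\mathfrak{C}')$, the tower property gives that conditionally on $(\mathcal{L}_{\frac{1}{2}},\mathfrak{C}')$ the signs are still i.i.d.\ uniform over the classes of $\mathfrak{C}'$; combined with $|\phi_{x}|=\sqrt{2\widehat{\mathcal{L}}^{x}_{\frac{1}{2}}}$ this is exactly the last two bullets. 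For the independence of the edge indicators one uses the spatial Markov property of $\widetilde{\phi}$, equivalently of $\widetilde{\mathcal{L}}_{\frac{1}{2}}$, along the edges of $\widetilde{\mathcal{G}}$: conditionally on $\mathcal{L}_{\frac{1}{2}}$, the portions of the loops of $\widetilde{\mathcal{L}}_{\frac{1}{2}}$ lying strictly inside the metric edges — together with the loops of $\widetilde{\mathcal{L}}_{\frac{1}{2}}$ which never meet $V$ — are reconstructed independently from one edge to the next, so the events ``$\widetilde{\phi}$ does not vanish on the interior of $\{x,y\}$'' are conditionally independent, and the only remaining point is to compute, for one unvisited edge, the probability of that event given $\mathcal{L}_{\frac{1}{2}}$.

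So fix an unvisited edge $\{x,y\}$, realised in $\widetilde{\mathcal{G}}$ as a segment of length proportional to $C(x,y)^{-1}$. Conditionally on $\mathcal{L}_{\frac{1}{2}}$ the loop configuration inside this segment depends only on the two occupation times $\widehat{\mathcal{L}}^{x}_{\frac{1}{2}}$, $\widehat{\mathcal{L}}^{y}_{\frac{1}{2}}$ and on the fact that no loop crosses the edge (which is what ``unvisited'' means); writing $a=\sqrt{2\widehat{\mathcal{L}}^{x}_{\frac{1}{2}}}$ and $b=\sqrt{2\widehat{\mathcal{L}}^{y}_{\frac{1}{2}}}$, the field $\widetilde{\phi}$ along the segment is, conditionally, a Brownian bridge between the endpoint values (of absolute values $a$ and $b$) further conditioned to carry no crossing loop, and the event we want is that this bridge has no zero in the interior. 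When the two endpoint signs disagree this forces a zero and the probability is $0$, consistently with the sign constraint; when they agree, one combines the probability that a Brownian bridge between same‑sign endpoints avoids $0$ — which in the metric‑graph normalisation equals $1-\exp(-2C(x,y)ab)=1-\exp\!\big(-4C(x,y)\sqrt{\widehat{\mathcal{L}}^{x}_{\frac{1}{2}}\widehat{\mathcal{L}}^{y}_{\frac{1}{2}}}\big)$ — with the explicitly known conditional law, given the field profile on the edge, of the number of loops of the loop soup crossing that edge, and the ``no crossing loop'' conditioning exactly halves the exponent, producing $1-\exp\!\big(-2C(x,y)\sqrt{\widehat{\mathcal{L}}^{x}_{\frac{1}{2}}\widehat{\mathcal{L}}^{y}_{\frac{1}{2}}}\big)$. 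Plugging this back into the previous two paragraphs shows that restricting the coupling of section \ref{SecCouplComplex} produces $(\mathcal{L}_{\frac{1}{2}},\phi)$ by precisely the four‑step recipe of the statement, whence the two couplings agree in law and, in particular, $\phi$ is a free field on $\mathcal{G}$. The main obstacle is this last single‑edge computation together with its normalisation bookkeeping: one has to pin down the exact conditional law, given $\mathcal{L}_{\frac{1}{2}}$, of $\widetilde{\phi}$ restricted to an unvisited edge (the Brownian bridge carrying the ``no crossing loop'' tilt), match the relation between the edge length, the conductance $C(x,y)$ and the variance of the bridge, and verify that the tilt halves the exponent as claimed; a secondary point is a clean statement of the edge‑wise Markov property of the metric‑graph loop soup used for the conditional independence.
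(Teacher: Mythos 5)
Your overall architecture is the right one and matches the paper's: reduce the theorem to (a) identifying the conditional law of the signs of $\phi$ given $\vert\phi\vert$ on $\widetilde{\mathcal{G}}$ as independent fair coins per positive component, and (b) computing, conditionally on $\mathcal{L}_{\frac{1}{2}}$ and independently over the unvisited edges, the probability that the metric-graph field has no zero inside an edge. But the decisive step (b) is not actually proved. You replace it by the assertion that the unconditioned bridge avoidance probability $1-e^{-2C(x,y)\vert\phi_{x}\phi_{y}\vert}=1-e^{-4C(x,y)\sqrt{\widehat{\mathcal{L}}^{x}_{\frac{1}{2}}\widehat{\mathcal{L}}^{y}_{\frac{1}{2}}}}$ gets its exponent ``exactly halved'' by conditioning on the absence of a crossing loop. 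That halving is precisely the content of the theorem on a single edge, and nothing in your argument establishes it: to run your Bayes-type computation you would need the explicit conditional law, given the field profile on the edge, of the number of loop-soup crossings of that edge, which you neither state nor derive (and which is not available elsewhere in this paper). The paper instead identifies the conditional law of the occupation field on an unvisited edge, given $\mathcal{L}_{\frac{1}{2}}$, as the sum of three independent processes --- the square of a standard Brownian bridge (internal loops) plus two squared Bessel-$0$ processes started from $\widehat{\mathcal{L}}^{x}_{\frac{1}{2}}$, resp. $\widehat{\mathcal{L}}^{y}_{\frac{1}{2}}$, conditioned to die before $\rho(e)$ (the non-crossing excursions) --- and then computes the zero probability by an explicit calculation with the hitting-time and last-zero densities, yielding $e^{-2C(x,y)\sqrt{\widehat{\mathcal{L}}^{x}_{\frac{1}{2}}\widehat{\mathcal{L}}^{y}_{\frac{1}{2}}}}$ (Property \ref{PropertyIndep}, Lemmas \ref{LemProbInt}--\ref{LemIntegral}, Corollary \ref{CorProbAbsEdge}). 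Also note that, conditionally on the discrete loops, the field on an unvisited edge is not literally ``a Brownian bridge conditioned to carry no crossing loop'' of the GFF; making your formulation precise already requires the excursion decomposition above, so the single-edge computation cannot be bypassed.

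A secondary gap concerns (a): you claim that in the construction of section \ref{SecCouplComplex} the cluster signs are independent fair coins, but that construction samples the sign from the conditional law of the GFF given $\vert\phi\vert$; the identification of this conditional law as i.i.d.\ uniform signs per positive component is exactly Lemma \ref{LemSignGFF}, which the paper proves via a strong Markov property of the metric-graph field (Lemma \ref{LemChangeSignComponent}) together with a finite-volume approximation when $V$ is infinite. Asserting it by reference to the construction is circular; on an infinite metric graph it genuinely needs this argument. The conditional independence of the edge events given $\mathcal{L}_{\frac{1}{2}}$, which you obtain from the excursion-wise reconstruction, is fine and is how the paper argues as well.
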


In Section \ref{SecAltProof} we will give an alternative, direct, proof that the coupling holds using its description given in Section \ref{SecAltDesc}. 

In Section \ref{SecPerco} we will apply Theorem \ref{ThmCoupling} to the loop percolation problem. The loops of $\mathcal{L}_{1/2}$ are said to percolate if there is an unbounded cluster of loops. This question percolation was studied in \cite{LeJanLemaire2012LoopClusters} and \cite{ChangSapozhnikov2014PercLoops}. Obviously from Theorem \ref{ThmCoupling} follows that the loops do not percolate if the sign clusters of $\phi$ are all bounded. But we will show that even in some situations where $\phi$ is known to have some (two) infinite sign clusters, the loops of $\mathcal{L}_{1/2}$ still do not percolate:

\begin{thm}
\label{ThmPerco}
Consider the following networks:
\begin{itemize}
\item $\mathbb{Z}^{2}$ with uniform conductances and a nonzero uniform killing measure;
\item the discrete half-plane $\mathbb{Z}\times\mathbb{N}$ with instantaneous killing on the boundary $\mathbb{Z}\times\lbrace 0\rbrace$
and no killing elsewhere;
\item $\mathbb{Z}^{d}$, $d\geq 3$, with uniform conductances and no killing measure.
\end{itemize}
On all above networks $\mathcal{L}_{1/2}$ does not percolate.
\end{thm}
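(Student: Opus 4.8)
\emph{Proof plan.} The starting point is Theorem \ref{ThmCoupling}: under the coupling each cluster $\mathcal{C}\in\mathfrak{C}_{1/2}$ is connected and the sign of $\phi$ is constant on it, so $\mathcal{C}$ lies inside a single sign cluster of $\phi$. In fact more is available once one works on the metric graph $\widetilde{\mathcal{G}}$ used to build the coupling: the loops of $\mathcal{L}_{1/2}$ are realized as continuous loops living in the open set $\{\widetilde{\phi}\neq 0\}$, so after restriction to $V$ every loop cluster is contained in the vertex set of a single connected component of $\{\widetilde{\phi}\neq 0\}\subset\widetilde{\mathcal{G}}$. Consequently, \emph{if every connected component of $\{\widetilde{\phi}\neq 0\}$ is bounded then $\mathcal{L}_{1/2}$ does not percolate}; this is the mechanism for the two planar cases.

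For $\mathbb{Z}^{2}$ with a non-zero mass and for the half-plane $\mathbb{Z}\times\mathbb{N}$ with killing on the boundary, I would therefore prove that the metric-graph free field $\widetilde{\phi}$ has a.s.\ only bounded sign clusters. I expect this to follow from planarity together with the symmetry $\widetilde{\phi}\overset{d}{=}-\widetilde{\phi}$: the components of $\widetilde{\phi}^{-1}(0)$ separate $\{\widetilde{\phi}>0\}$ from $\{\widetilde{\phi}<0\}$, and a square-lattice argument in the spirit of Zhang's theorem (using the FKG inequality for the cable-graph field and a box-crossing estimate, which for a positively associated field with fast enough decay of correlations comes from a standard renormalization) shows that $\{\widetilde{\phi}>0\}$ and $\{\widetilde{\phi}<0\}$ cannot both percolate, hence by symmetry neither does; on the half-plane one also exploits that $\widetilde{\phi}$ vanishes on the boundary row, and the exponential decay of correlations in the massive $\mathbb{Z}^{2}$ case only makes the crossing input easier. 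Boundedness of the sign clusters of $\widetilde{\phi}$ then yields non-percolation of $\mathcal{L}_{1/2}$ through the inclusion above.

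The case $\mathbb{Z}^{d}$, $d\geq 3$, is the real difficulty, since there $\{\widetilde{\phi}>0\}$ \emph{does} percolate on $\widetilde{\mathbb{Z}^{d}}$: the sign clusters of $\phi$ are unbounded, yet an unbounded loop cluster must still be ruled out. Here I would use the alternative description of Theorem \ref{ThmAltDescCoupl}: the cluster of $x$ in $\mathfrak{C}_{1/2}$ is the connected component of $x$ in the subgraph of $\mathcal{G}$ whose edges are exactly those traversed by some loop, and, conditionally on the occupation field $\widehat{\mathcal{L}}_{1/2}$ (equivalently on $|\phi|$), these edges are opened independently. The key point is that an edge $\{x,y\}$ is traversed only when $\widetilde{\phi}$ genuinely crosses the corresponding cable without touching $0$ --- an event strictly less likely, given same-sign endpoints, than the mere non-vanishing event of probability $1-\exp\bigl(-2C(x,y)\sqrt{\widehat{\mathcal{L}}^{x}_{1/2}\widehat{\mathcal{L}}^{y}_{1/2}}\bigr)$ that already creates the percolating sign cluster. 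One then has to show that this traversed-edge percolation is subcritical on $\mathbb{Z}^{d}$ for \emph{every} $d\geq 3$: I would bound $\mathbb{P}(0\leftrightarrow ne_{1}$ in $\mathfrak{C}_{1/2})$ by a sum over self-avoiding paths and push through a renormalization, the coarse good-block event being that no loop cluster crosses a block, with the block-probability tending to $1$ as the block grows \emph{uniformly in the dimension}, using the decay $G(x,y)\to 0$ to supply the necessary sprinkling. This uniform multi-scale estimate --- not anything in the coupling itself --- is where I expect the bulk of the work to lie; alternatively one might combine the coupling with the existing loop-percolation bounds of Le Jan--Lemaire and Chang--Sapozhnikov to reach the threshold $\alpha=\tfrac12$.
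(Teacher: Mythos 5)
There is a genuine gap, and it is concentrated exactly where you place ``the bulk of the work'': the case $\mathbb{Z}^{d}$, $d\geq 3$, is not proved in your proposal, only deferred to a ``uniform multi-scale estimate'' that you never establish. Moreover the premise on which you build that part is wrong: on the metric graph $\widetilde{\mathbb{Z}}^{d}$ the level-zero set $\lbrace\widetilde{\phi}>0\rbrace$ does \emph{not} percolate for any $d\geq 3$. You are conflating the discrete field (whose sign clusters are indeed unbounded for large $d$) with the cable-graph field; the whole point of the paper is that passing to the metric graph destroys this percolation, and the boundedness of the cable sign clusters in every $d\geq 3$ is precisely Proposition \ref{PropFiniteClusters}, proved by the same mechanism as Theorem \ref{ThmPerco} itself. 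So the dichotomy you set up (``planar cases via bounded sign clusters, $d\geq 3$ via a separate subcriticality argument for traversed edges'') is not the right structure, and the second half of it is an open-ended renormalization sketch rather than a proof; the suggestion to instead ``combine with existing loop-percolation bounds of Le Jan--Lemaire and Chang--Sapozhnikov'' does not work either, since those bounds do not reach $\alpha=\tfrac12$.

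What the paper actually does is much lighter and uniform over all three lattices, and none of it appears in your proposal. The coupling gives the quantitative two-point bound of Proposition \ref{PropMajClust}: if $x$ and $y$ lie in the same cluster of $\mathfrak{C}'$ (hence in particular of $\mathcal{L}_{\frac12}$) then $\operatorname{sign}(\phi_{x})=\operatorname{sign}(\phi_{y})$, so
\begin{displaymath}
\mathbb{P}\big(x\leftrightarrow y~\text{in}~\mathcal{L}_{\frac12}\big)\leq\mathbb{E}\big[\operatorname{sign}(\phi_{x})\operatorname{sign}(\phi_{y})\big]=\dfrac{2}{\pi}\arcsin\big(g(x,y)\big),
\end{displaymath}
a purely Gaussian computation. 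Then one combines uniqueness of the infinite loop cluster (Burton--Keane, quoted from Chang--Sapozhnikov for $\mathbb{Z}^{2}$ with mass and $\mathbb{Z}^{d}$, and adapted to the half-plane in Proposition \ref{PropUniqHalfPlane}) with the Harris--FKG inequality for the loop ensemble to get $\theta(x)^{2}\leq\frac{2}{\pi}\arcsin(g(x,x+ne_{1}))$, and lets $n\to\infty$ using only the decay of the normalized Green's function. No box-crossing estimates, no Zhang-type planar argument, no renormalization are needed; in particular your planar sketch, even if it could be completed, solves only the easier two cases, and your $d\geq 3$ argument as proposed would have to be replaced by the two-point-function route above (or something equally quantitative) to close the proof.
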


We will also give a bound for the probability that two vertices belong to the same cluster of loops.

In Section \ref{SecInterlacement} we consider random interlacements on $\mathbb{Z}^{d}$ introduced by Sznitman 
(\cite{Sznitman2010RandomInterlacement}). We consider that the edges of $\mathbb{Z}^{d}$ have conductances equal to $1$ and that 
$(G(x,y))_{x,y\in\mathbb{Z}^{d}}$ and $(\phi_{x})_{x\in\mathbb{Z}^{d}}$ are the corresponding Green's function and Gaussian free field. Given $K$ a finite subset of $\mathbb{Z}^{d}$, let $e_{K}$ be the equilibrium measure of $K$ (supported on $K$):
\begin{displaymath}
\forall x\in K, e_{K}(\lbrace x\rbrace)=\mathbb{P}_{x}(\forall j\geq 1, Y_{j}\not\in K),
\end{displaymath}
where $(Y_{j})_{j\geq 0}$ is the simple random walk on
$\mathbb{Z}^{d}$.
The capacity of $K$ is
\begin{displaymath}
\operatorname{cap}(K)=e_{K}(K).
\end{displaymath}
Let $Q_{K}$ be the measure on doubly infinite trajectories on $\mathbb{Z}^{d}$, $(x_{j})_{j\in\mathbb{Z}}$ parametrized by discrete time $j\in\mathbb{Z}$, of total mass $\operatorname{cap}(K)$, such that
\begin{itemize}
\item the measure on $x_{0}$ induced by $Q_{K}$ is $e_{K}$;
\item conditional on $x_{0}$, $(x_{j})_{j\geq 0}$ and 
$(x_{j})_{j\leq 0}$ are independent;
\item conditional on $x_{0}$, $(x_{j})_{j\geq 0}$ is a nearest neighbour random walk on $\mathbb{Z}^{d}$ starting from $x_{0}$;
\item conditional on $x_{0}$, $(x_{-j})_{j\geq 0}$ is a nearest neighbour random walk on $\mathbb{Z}^{d}$ starting from $x_{0}$ conditioned not to return in $K$ for $j\geq 1$.
\end{itemize}
There is an (infinite) measure $\mu_{il}$ on right continuous doubly infinite trajectories $(w(t))_{t\in\mathbb{R}}$ on $\mathbb{Z}^{d}$, parametrized by continuous time, considered up to a translation of parametrization 
($(w(t))_{t\in\mathbb{R}}$ same as $(w(t))_{t+t_{0}\in\mathbb{R}}$) such that
\begin{itemize}
\item $\lim_{t\rightarrow+\infty}\vert w(t)\vert = \lim_{t\rightarrow-\infty}\vert w(t)\vert=+\infty$ $\mu_{il}$-almost everywhere;
\item for any finite subset $K$ of $\mathbb{Z}^{d}$, 
by restricting $\mu_{il}$ to trajectories visiting $K$, choosing the initial time $t=0$ to be the first entrance time in $K$ and taking the skeleton (the doubly infinite sequence of successively visited vertices) we get the measure $Q_{K}$;
\item under $\mu_{il}$, conditional on the skeleton, the doubly infinite sequence of holding times of the trajectory (times spent at vertices before jumping to neighbours) is i.i.d with exponential distribution of mean $(2d)^{-1}$. 
\end{itemize}
See \cite{Sznitman2010RandomInterlacement} and \cite{Sznitman2012Isomorphism}.

The random interlacement $\mathcal{I}^{u}$ of level $u>0$ is the Poisson point process of intensity $u\mu_{il}$.
The vacant set $\mathcal{V}^{u}$ of $\mathcal{I}^{u}$ is the set of vertices not visited by any of trajectories in $\mathcal{I}^{u}$. There is $u_{\ast}\in (0,+\infty)$ such that for $u<u_{\ast}$, 
$\mathcal{V}^{u}$ has a.s. infinite connected components and for $u>u_{\ast}$ $\mathcal{V}_{u}$ has a.s. only finite connected components (\cite{Sznitman2010RandomInterlacement}, \cite{SidoraviciusSznitman2009VacantSet}).

The occupation field $(L^{x}(\mathcal{I}^{u}))_{x\in\mathbb{Z}^{d}}$ of the interlacement $\mathcal{I}^{u}$ is defined as
\begin{displaymath}
L^{x}(\mathcal{I}^{u}):=\sum_{w\in \mathcal{I}_{u}}\int_{-\infty}^{+\infty}1_{w(t)=x} dt.
\end{displaymath}
In \cite{Sznitman2012Isomorphism} Sznitman showed the following isomorphism between 
$(L^{x}(\mathcal{I}^{u}))_{x\in\mathbb{Z}^{d}}$ and the Gaussian free field: Let 
$(\phi'_{x})_{x\in\mathbb{Z}^{d}}$
be a copy of the free field independent of $(L^{x}(\mathcal{I}^{u}))_{x\in\mathbb{Z}^{d}}$. Then
\begin{equation}
\label{SecIntro: EqIsoSz}
\Big(L^{x}(\mathcal{I}^{u})+\dfrac{1}{2}\phi'^{2}_{x}\Big)_{x\in\mathbb{Z}^{d}}\stackrel{(d)}{=}
\Big(\dfrac{1}{2}(\phi_{x}-\sqrt{2u})^{2}\Big)_{x\in\mathbb{Z}^{d}}.
\end{equation}
This isomorphism can be used to relate the random interlacement to the level sets of the Gaussian free field. There is $h_{\ast}\in[0,+\infty)$ such that for $h<h_{\ast}$, the set $\lbrace x\in \mathbb{Z}^{d}\vert \phi_{x}>h\rbrace$ has an infinite connected components and for $h>h_{\ast}$ only finite connected components (\cite{RodriguezSznitman2013PercGFF},
\cite{BricmontLebowitzMaes1987PercGFF}). $h_{\ast}$ is positive if the dimension $d$ high enough (\cite{RodriguezSznitman2013PercGFF}). In Section \ref{SecInterlacement} we will prove:

\begin{thm}
\label{ThmLevelSets}
For all $u>0$, there is a coupling between $\mathcal{I}^{u}$ and $\phi$ such that a.s.
\begin{displaymath}
\lbrace x\in \mathbb{Z}^{d}\vert \phi_{x}>\sqrt{2u}\rbrace\subseteq\mathcal{V}^{u}.
\end{displaymath}
In particular
\begin{displaymath}
h_{\ast}\leq \sqrt{2u_{\ast}}.
\end{displaymath}
\end{thm}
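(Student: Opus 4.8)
The plan is to transfer the problem to the metric graph $\widetilde{\mathbb{Z}^{d}}$, where, as in the proof of Theorem \ref{ThmCoupling}, occupation fields and the free field become continuous functions and the intermediate value principle is available. First I would construct the metric graph interlacement $\widetilde{\mathcal{I}}^{u}$ at level $u$ by interpolating the discrete trajectories of $\mathcal{I}^{u}$ along the cables; its continuous occupation field $(\widetilde{L}^{x}(\mathcal{I}^{u}))_{x\in\widetilde{\mathbb{Z}^{d}}}$ is strictly positive exactly on the trace $\widetilde{\mathcal{I}}^{u}$ and vanishes on the open metric graph vacant set $\widetilde{\mathcal{V}}^{u}=\widetilde{\mathbb{Z}^{d}}\setminus\widetilde{\mathcal{I}}^{u}$, and restriction to $\mathbb{Z}^{d}$ recovers $\mathcal{I}^{u}$, $\mathcal{V}^{u}$ and the discrete free field. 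Next I would establish the metric graph form of Sznitman's isomorphism: if $\widetilde{\phi}'$ is an independent metric graph free field, then $\bigl(2\widetilde{L}^{x}(\mathcal{I}^{u})+(\widetilde{\phi}'_{x})^{2}\bigr)_{x}$ has the law of $\bigl((\widetilde{\phi}_{x}-\sqrt{2u})^{2}\bigr)_{x}$ for a metric graph free field $\widetilde{\phi}$; equivalently, $\frac{1}{2}(\widetilde{\phi}-\sqrt{2u})^{2}$ is distributed as the occupation field of a metric graph loop soup $\mathcal{L}_{1/2}$ together with an independent copy of $\mathcal{I}^{u}$.

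To obtain the coupling I would reconstruct $\widetilde{\phi}$ from this data with a prescribed sign rule. Given $\mathcal{L}_{1/2}$ and an independent $\mathcal{I}^{u}$ on $\widetilde{\mathbb{Z}^{d}}$, form the clusters of $\mathcal{L}_{1/2}\cup\mathcal{I}^{u}$, attach to each cluster $\mathcal{C}$ the sign $\sigma(\mathcal{C})=-1$ if $\mathcal{C}$ contains at least one interlacement trajectory and otherwise an independent uniform sign in $\{-1,+1\}$, and set $\widetilde{\phi}_{x}:=\sqrt{2u}+\sigma(\mathcal{C}(x))\sqrt{2\widehat{\mathcal{L}}^{x}_{1/2}+2\widetilde{L}^{x}(\mathcal{I}^{u})}$ on the support and $\widetilde{\phi}_{x}:=\sqrt{2u}$ off it. Here the metric graph is essential: $\widetilde{\phi}-\sqrt{2u}$ is a continuous function vanishing off the support, so by the intermediate value principle it has constant sign $\sigma(\mathcal{C})$ on each cluster $\mathcal{C}$; on every cluster meeting $\widetilde{\mathcal{I}}^{u}$ that sign is $-1$, hence $\widetilde{\mathcal{I}}^{u}\subseteq\{x\mid\widetilde{\phi}_{x}<\sqrt{2u}\}$, i.e.\ $\{x\mid\widetilde{\phi}_{x}>\sqrt{2u}\}\subseteq\widetilde{\mathcal{V}}^{u}$, and restricting to $\mathbb{Z}^{d}$ gives the first assertion. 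An alternative, and perhaps more robust, way to produce this coupling is by finite approximation: on the metric graph of a box $D_{N}$ with boundary wired to a single vertex $\delta_{N}$ one has the metric graph generalized second Ray--Knight theorem, which can be upgraded to a coupling exactly as for Theorem \ref{ThmCoupling}, with the excursions producing $\mathcal{I}^{(N)}$ issued from $\delta_{N}$ where the field is pinned to $-\sqrt{2u}<0$; one then lets $N\to\infty$, using the convergence of (walk from the wired boundary, free field) on $D_{N}$ to $(\mathcal{I}^{u},\widetilde{\phi})$ on $\mathbb{Z}^{d}$ and the fact that the inclusion is a closed condition preserved in the limit.

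I expect the main obstacle to be precisely this sign rule: the distributional isomorphism by itself carries no geometric information, so one must build a coupling in which the sign of $\widetilde{\phi}-\sqrt{2u}$ is forced to $-1$ on every interlacement cluster, and then check that the field so obtained genuinely is a free field despite the asymmetry introduced by the shift $\sqrt{2u}>0$, which biases the signs towards $-1$, unlike the symmetric situation of Theorem \ref{ThmAltDescCoupl}. This is the analogue for the interlacement of the reconstruction in Theorem \ref{ThmAltDescCoupl}, and I would carry it out either through the finite-box approximation above, where the biased sign is produced automatically by the excursions issued from the pinned boundary $\delta_{N}$, or by a direct computation adapting section \ref{SecAltProof}. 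The remaining technical ingredients are the metric graph Sznitman isomorphism itself, the continuity and support properties of $\widetilde{L}(\mathcal{I}^{u})$, and the tightness and measurability needed for the limit.

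Finally, $h_{\ast}\leq\sqrt{2u_{\ast}}$ follows by monotonicity. Fix $u>u_{\ast}$, so that $\mathcal{V}^{u}$ has a.s.\ only finite connected components. In the coupling $\{x\mid\phi_{x}>\sqrt{2u}\}\subseteq\mathcal{V}^{u}$, and since a connected component of a subset lies inside a component of the ambient set, $\{x\mid\phi_{x}>\sqrt{2u}\}$ has a.s.\ only finite components, a statement about $\phi$ alone. As $h_{\ast}$ is by definition the threshold below which $\{x\mid\phi_{x}>h\}$ has an infinite component a.s., this forces $\sqrt{2u}\geq h_{\ast}$; letting $u\downarrow u_{\ast}$ gives $h_{\ast}\leq\sqrt{2u_{\ast}}$.
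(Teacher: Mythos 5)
Your first and last steps match the paper: the interpolation of $\mathcal{I}^{u}$ to a continuous interlacement $\widetilde{\mathcal{I}}^{u}$ with continuous occupation field, the metric-graph version of Sznitman's isomorphism obtained through the finite graphs $\mathcal{G}_{n}$ with wired boundary and the generalized second Ray--Knight theorem, and the monotonicity argument giving $h_{\ast}\leq\sqrt{2u_{\ast}}$ are all exactly what the paper does. The gap is in the middle, and you name it yourself without closing it: your coupling is built by \emph{prescribing} the sign of $\widetilde{\phi}-\sqrt{2u}$ (deterministically $-1$ on clusters meeting the interlacement, independent uniform signs elsewhere), and the entire burden then falls on proving that the resulting field is a Gaussian free field. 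You acknowledge this is ``the main obstacle'' and offer only sketches (a finite-box coupling with excursions from the pinned boundary, or an adaptation of section \ref{SecAltProof}), neither of which is carried out. Moreover the uniform-independent-sign rule on the remaining clusters is not the analogue of Lemma \ref{LemSignGFF} here: the shift by $-\sqrt{2u}$ breaks the flip symmetry used in Lemma \ref{LemChangeSignComponent} (the conditional mean of $\phi-\sqrt{2u}$ outside a compact where it vanishes is $-\sqrt{2u}\,\mathbb{P}_{y}(\text{never hit the compact})<0$ by transience), so the conditional law of the signs given $\vert\phi-\sqrt{2u}\vert$ is genuinely biased and identifying it is a hard problem that the theorem does not require one to solve.

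The paper's proof avoids this entirely, and the ingredient it uses instead is the one your proposal never invokes: Proposition \ref{PropFiniteClusters}, i.e.\ the fact (proved in section \ref{SecPerco} via uniqueness of the infinite cluster, the FKG inequality for the metric-graph loop soup, and the $\frac{2}{\pi}\arcsin(g(x,y))$ bound) that the sign clusters of the free field on $\widetilde{\mathbb{Z}}^{d}$ are all bounded. Given that, one simply samples the sign of $\phi-\sqrt{2u}$ from its (unspecified) conditional law given $\vert\phi-\sqrt{2u}\vert=\sqrt{2L(\widetilde{\mathcal{I}}^{u})+\phi'^{2}}$; no claim about what that law looks like is needed. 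Since the continuous occupation field is strictly positive along every interlacement trajectory, each trajectory lies in a single sign cluster of $\phi-\sqrt{2u}$, which is unbounded because the trajectory is; and since every component of $\lbrace\phi>\sqrt{2u}\rbrace$ sits inside a (bounded) positive sign cluster of $\phi$, that sign must be negative. This a posteriori forces exactly the sign behaviour you tried to impose by hand, with no need to verify any reconstruction of the free field. To repair your argument you would either have to import Proposition \ref{PropFiniteClusters} and argue as the paper does, or genuinely prove that your prescribed-sign construction yields a free field, which is a substantially harder (and here unnecessary) inversion problem.
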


This theorem is again obtained by replacing the discrete graph $\mathbb{Z}^{d}$ by a metric graph.

\section{Coupling through interpolation by a metric graph}
\label{SecCouplComplex}

One can associate a measure on loops following the formal pattern of 
\eqref{IntroDefmu} to a wide range of Markovian or sub-Markovian processes. In the articles \cite{LeJanMarcusRosen2012Loops} and \cite{FitzsimmonsRosen2012LoopsIsomorphism} the authors give quite general definitions for a wide range of cases. The setting of \cite{FitzsimmonsRosen2012LoopsIsomorphism}
will cover our needs. In that article the measure on loops is defines for transient Borel right processes on a locally compact state space with with a countable base, that have $0$-potential densities with respect some sigma-finite measure, the $0$-potential densities being assumed to be finite everywhere (in particular on the diagonal) and continuous.
In \cite{Lupu20131dimLoops} were specifically studied the measures on loops associated to one-dimensional diffusions and the corresponding loop ensembles. This case is of particular interest for the proof of Theorem
\ref{ThmCoupling}. Indeed in the setting of one-dimensional diffusions the occupation fields are continuous space-parametrized processes with non-negative values and the clusters of loops correspond exactly to the excursions of the occupation field above zero (Proposition $4.8$ in \cite{Lupu20131dimLoops}). In particular for the loop ensemble of parameter $\frac{1}{2}$, the clusters of loops are exactly the sign clusters of the one-dimensional Gaussian free field.

The nice identity between the clusters of loops and the sign clusters of 
GFF in case of one-dimensional diffusions leads us to consider the metric graph or cable system $\widetilde{\mathcal{G}}$ associated to the graph $\mathcal{G}$ (\cite{BaxterChacon1984DiffusionsNetworks}, \cite{EnriquezKifer2001BMGraphs},
\cite{Folz2014VolGrowthStochCompl}). Topologically $\widetilde{\mathcal{G}}$ is constructed as follows: to each edge $e$ of $\mathcal{G}$ corresponds a different compact interval, each endpoint of this interval being identified to one of the two vertices adjacent to $e$ in $\mathcal{G}$; for every vertex $x\in V$ the intervals corresponding to the edges adjacent to $x$ are glued together at the endpoints identified to the vertex $x$. We will consider $V$ to be a subset of $\widetilde{\mathcal{G}}$. Given any $e\in E$, $I_{e}$ will denote the subset of $\widetilde{\mathcal{G}}$ made of the interval corresponding to $e$ minus its two endpoints. Topologically $I_{e}$ is an open interval. $\widetilde{\mathcal{G}}$ is a disjoint union
\begin{displaymath}
\widetilde{\mathcal{G}}=V\cup\bigcup_{e\in E} I_{e}.
\end{displaymath}
We further endow $\widetilde{\mathcal{G}}$ with a metric structure by assigning a finite length to each of the $(I_{e})_{e\in I}$. The length of $I_{e}$ is set to be
\begin{displaymath}
\rho(e):=\dfrac{1}{2C(e)},
\end{displaymath}
which makes $I_{e}$ isometric to $(0,\rho(e))$.  This particular choice of the lengths will be explained farther. Let $m$ be the Borel measure on $\widetilde{\mathcal{G}}$ assigning a zero mass to $V$, a mass $\rho(e)$ to each of the $I_{e}$ and to a subinterval of $I_{e}$ a mass equal to its length. $m$ is $\sigma$-finite.

On $\widetilde{\mathcal{G}}$ one can define a standard Brownian motion
$B^{\widetilde{\mathcal{G}}}$. Here we give a description through chaining stopped Markovian paths on $\widetilde{\mathcal{G}}$ (see 
\cite{BaxterChacon1984DiffusionsNetworks}, \cite{EnriquezKifer2001BMGraphs} and \cite{Folz2014VolGrowthStochCompl}). If $B^{\widetilde{\mathcal{G}}}$ starts in the interior $I_{e}$ of an edge, it behaves as the standard Brownian motion on $I_{e}$ until it reaches a vertex. To describe the behaviour of $B^{\widetilde{\mathcal{G}}}$ starting from a vertex we use the excursions. Let $x_{0}\in V$, 
$\lbrace x_{1},\dots,x_{\deg(x_{0})}\rbrace$ the vertices adjacent to $x_{0}$ and $\lbrace\lbrace x_{0},x_{1}\rbrace,\dots,
\lbrace x_{0},x_{\deg(x_{0})}\rbrace\rbrace$ the edges joining $x_{0}$ to one of its neighbours. Let $(B_{t})_{t\geq 0}$ be a standard Brownian motion on $\mathbb{R}$ starting from $0$. To each excursion $\mathtt{e}$ of $(B_{t})_{t\geq 0}$ away from $0$ we associate a random variable $x(\mathtt{e})$ uniformly distributed in $\lbrace x_{1},\dots,x_{\deg(x_{0})}\rbrace$. We chose the different r.v.'s $x(\mathtt{e})$ to be independent conditional on the family of excursions of $(B_{t})_{t\geq 0}$. $\mathtt{e}_{t}$ the excursion straddling the time $t$. Let be
\begin{displaymath}
T_{\lbrace x_{1},\dots,x_{\deg(x_{0})}\rbrace}:=
\inf\lbrace t\geq 0\vert \vert B_{t}\vert\geq 
\rho(\lbrace x_{0},x(\mathtt{e}_{t})\rbrace)\rbrace.
\end{displaymath}
To the path $(B_{t})_{0\leq t\leq T_{\lbrace x_{1},\dots,x_{\deg(x_{0})}\rbrace}}$ we associate a path in $\widetilde{\mathcal{G}}$: it starts at $x_{0}$ and each excursion 
$\mathtt{e}$ of $(B_{t})_{0\leq t\leq T_{\lbrace x_{1},\dots,x_{\deg(x_{0})}\rbrace}}$ is performed in 
$I_{\lbrace x_{0},x(\mathtt{e})\rbrace}$ instead of $\mathbb{R}$. The obtained path has the law of $B^{\widetilde{\mathcal{G}}}$ starting at $x_{0}$ and stopped at reaching 
$\lbrace x_{1},\dots,x_{\deg(x_{0})}\rbrace$. Let $(L^{y}_{t}(B))_{t\geq 0, y\in\mathbb{R}}$ be the continuous family of local times of $B$ and 
$(L^{y}_{t}(B^{\widetilde{\mathcal{G}}}))_{t\geq 0, y\in\widetilde{\mathcal{G}}}$ the family of local times of 
 $B^{\widetilde{\mathcal{G}}}$ started at $x_{0}$, relative to the measure $m$. Let $y\in I_{\lbrace x_{0},x_{i}\rbrace}$ and $\delta$ be the length of the subinterval $(x_{0},y)$ of 
$I_{\lbrace x_{0},x_{i}\rbrace}$. Then
\begin{multline*}
(L^{y}_{t}(B^{\widetilde{\mathcal{G}}}))
_{0\leq t\leq T_{\lbrace x_{1},\dots,x_{\deg(x_{0})}\rbrace}}\\=
\left(\int_{0}^{t}1_{x(\mathtt{e}_{s})=x_{i}}
(dL^{\delta}_{s}(B)+dL^{-\delta}_{s}(B))\right)
_{0\leq t\leq T_{\lbrace x_{1},\dots,x_{\deg(x_{0})}\rbrace}},
\end{multline*}
and the limit, uniform in time, of the above process as $y$ converges to $x_{0}$ is
\begin{displaymath}
\left(\dfrac{2}{\deg(x_{0})}L^{0}_{t}(B)\right)
_{0\leq t\leq T_{\lbrace x_{1},\dots,x_{\deg(x_{0})}\rbrace}},
\end{displaymath}
whatever the value of $i$. Let $\mathcal{B}(x_{0},\delta)$ be the ball in $\widetilde{\mathcal{G}}$ around $x_{0}$ of radius $\delta$. If $\delta\leq\min_{1\leq i\leq\deg(x_{0})}
\rho(\lbrace x_{0},x_{i}\rbrace)$ then 
$m(\mathcal{B}(x_{0},\delta))=\deg(x_{0})\delta$. It follows that for 
$t\in [0,T_{\lbrace x_{1},\dots,x_{\deg(x_{0})}\rbrace}]$,
\begin{equation*}
\begin{split}
\lim_{\delta \rightarrow 0}\dfrac{1}{m(\mathcal{B}(x_{0},\delta))}
\int_{0}^{t}1_{B^{\widetilde{\mathcal{G}}}_{s}\in 
\mathcal{B}(x_{0},\delta)} ds =&\lim_{\delta \rightarrow 0}
\dfrac{1}{\deg(x_{0})\delta}\int_{0}^{t}1_{\vert B_{s}\vert<\delta} ds
\\=&\dfrac{2}{\deg(x_{0})}L^{0}_{t}(B).
\end{split}
\end{equation*}
It follows that the process $(B^{\widetilde{\mathcal{G}}}_{t})_{0\leq t\leq T_{\lbrace x_{1},\dots,x_{\deg(x_{0})}\rbrace}}$ has a space-time continuous family of local times. By concatenating different stopped paths  we get that the whole process $B^{\widetilde{\mathcal{G}}}$ has space-time continuous local times relative to the measure $m$. 
The measure on the height of excursions (in absolute value) induced by the measure on Brownian excursions is (see \cite{RevuzYor1999BMGrundlehren}, Chapter XII, Section $4$)
\begin{displaymath}
1_{a>0}\dfrac{da}{a^{2}}.
\end{displaymath}
It follows that 
$L^{0}_{T_{\lbrace x_{1},\dots,x_{\deg(x_{0})}\rbrace}}(B)$ is an exponential random variable with mean
\begin{displaymath}
\dfrac{\deg(x_{0})}{\sum_{i=1}^{\deg x_{0}}
\rho(\lbrace x_{0},x_{i}\rbrace)^{-1}}=
\dfrac{\deg(x_{0})}{2\sum_{i=1}^{\deg x_{0}}C(x_{0},x_{i})}.
\end{displaymath}
$L^{x_{0}}_{T_{\lbrace x_{1},\dots,x_{\deg(x_{0})}\rbrace}}(B^{\widetilde{\mathcal{G}}})$ is an exponential random variable with mean
\begin{displaymath}
\dfrac{1}{\sum_{i=1}^{\deg x_{0}}C(x_{0},x_{i})}
\end{displaymath}
and
\begin{displaymath}
\mathbb{P}_{x_{0}}\big(B^{\widetilde{\mathcal{G}}}
_{T_{\lbrace x_{1},\dots,x_{\deg(x_{0})}\rbrace}}=x_{j}\big)=
\dfrac{C(x_{0},x_{j})}{\sum_{i=1}^{\deg x_{0}}C(x_{0},x_{i})};
\end{displaymath}
see also Theorem $2.1$ in \cite{Folz2014VolGrowthStochCompl}. This explains our particular choice of the lengths $(\rho(e))_{e\in E}$.

From now on the Brownian motion $B^{\widetilde{\mathcal{G}}}$ on 
$\widetilde{\mathcal{G}}$ is considered to be constructed and the starting point to be arbitrary. It is not excluded that $B^{\widetilde{\mathcal{G}}}$  blows up in finite time. A necessary but not sufficient condition of this is the existence of a path of finite length that visits infinitely many vertices. Let $\tilde{\kappa}$ be the following measure on $\widetilde{\mathcal{G}}$:
\begin{displaymath}
\tilde{\kappa}:=\sum_{x\in V}\kappa(x)\delta_{x}.
\end{displaymath}
Let $\tilde{\zeta}$ be the first time either $B^{\widetilde{\mathcal{G}}}$ blows up or the additive functional
\begin{displaymath}
\int_{y\in\widetilde{\mathcal{G}}}L^{y}_{t}(B^{\widetilde{\mathcal{G}}})
\tilde{\kappa}(dy)=
\sum_{x\in V}L^{x}_{t}(B^{\widetilde{\mathcal{G}}})\kappa(x)
\end{displaymath}
hits an independent exponential time with mean $1$. $\tilde{\zeta}=+\infty$ a.s. if $\kappa\equiv 0$ and $B^{\widetilde{\mathcal{G}}}$ is conservative. For $\ell\geq 0$ let $\tau_{\ell}$ be the stopping time
\begin{displaymath}
\tau_{\ell}:=\inf\Big\lbrace T\geq 0\Big\vert\sum_{x\in V}L^{x}_{t}(B^{\widetilde{\mathcal{G}}})\geq \ell\Big\rbrace.
\end{displaymath}
If the starting point of $B^{\widetilde{\mathcal{G}}}$ is a vertex then
the process $(B^{\widetilde{\mathcal{G}}}_{\tau_{l}})_
{0\leq \ell<\sum_{x\in V}L^{x}_{\tilde{\zeta}}(B^{\widetilde{\mathcal{G}}})}$ has the same law as the Markov jump process $X$ on $V$. In particular it follows that the process
$(B^{\widetilde{\mathcal{G}}}_{t})_{0\leq t<\tilde{\zeta}}$ is transient.

The $0$-potential of the process $(B^{\widetilde{\mathcal{G}}}_{t})_{0\leq t<\tilde{\zeta}}$ has a density
relative to the measure $m$, the Green's function 
$(G(y,z))_{y,z\in \widetilde{\mathcal{G}}}$. We use the same notation as for the Green's function of $X$ because the latter is the restriction to $V$ of the first. The value of $(G(y,z))_{y,z\in \widetilde{\mathcal{G}}}$ on the interior of the edges is obtained from its value on the vertices by interpolation. Let $(x_{1},y_{1})$ and
$(x_{2},y_{2})$ be two pairs of adjacent vertices in $\mathcal{G}$. Let $z_{1}$ respectively $z_{2}$ be a point in the interval $[x_{1},y_{1}]$
respectively $[x_{2},y_{2}]$ and $r_{1}$ respectively $r_{2}$ be the length of $[x_{1},z_{1}]$ respectively $[x_{2},z_{2}]$. Then
\begin{multline}
\label{SecCouplComplex: EqLinInterpol}
G(z_{1},z_{2})=\dfrac{1}{\rho(\lbrace x_{1},y_{1}\rbrace)
\rho(\lbrace x_{2},y_{2}\rbrace)}
\Big((\rho(\lbrace x_{1},y_{1}\rbrace)-r_{1})
(\rho(\lbrace x_{2},y_{2}\rbrace)-r_{2})G(x_{1},x_{2})\\
+r_{1}r_{2}G(y_{1},y_{2})+
r_{1}(\rho(\lbrace x_{2},y_{2}\rbrace)-r_{2})G(y_{1},x_{2})+
(\rho(\lbrace x_{1},y_{1}\rbrace)-r_{1})r_{2}G(x_{1},y_{2})\Big)\\
+1_{\lbrace x_{1},y_{1}\rbrace=\lbrace x_{2},y_{2}\rbrace}2
\left(r_{1}\wedge r_{2}-\dfrac{r_{1}r_{2}}{\rho(\lbrace x_{1},y_{1}\rbrace)}\right),
\end{multline}
where by convention $(x_{1},y_{1})=(x_{2},y_{2})$ if 
$\lbrace x_{1},y_{1}\rbrace=\lbrace x_{2},y_{2}\rbrace$.

Let $(\phi_{y})_{y\in \widetilde{\mathcal{G}}}$ be the Gaussian free field on $\widetilde{\mathcal{G}}$ with covariance function $G$.
It's restriction to $V$ is the Gaussian free field on the graph $\mathcal{G}$, hence the same notation. Conditional on $(\phi_{x})_{x\in V}$, $(\phi_{y})_{y\in \widetilde{\mathcal{G}}}$ is obtained by joining on every edge $e$ the two values of $\phi$ on its endpoints by an independent bridge of length $\rho(e)$ of a Brownian motion with variance $2$ at time $1$ (not a standard Brownian bridge).
In particular $(\phi_{y})_{y\in \widetilde{\mathcal{G}}}$ has a continuous version.

The process $(B^{\widetilde{\mathcal{G}}}_{t})_{0\leq t<\tilde{\zeta}}$ fits into the framework of \cite{FitzsimmonsRosen2012LoopsIsomorphism} and one can associate to it a measure on time-parametrized continuous loops $\tilde{\mu}$. Let $\widetilde{\mathcal{L}}_{\alpha}$ be the
Poisson point process of loops of intensity $\alpha\tilde{\mu}$. We would like to stress that by loop we only mean a continuous paths with the same starting and endpoint without assumptions on its homotopy class and actually most loops in $\widetilde{\mathcal{L}}_{\alpha}$ are topologically trivial. Just as the process $(B^{\widetilde{\mathcal{G}}}_{t})_{0\leq t<\tilde{\zeta}}$ itself, the loop $\tilde{\gamma}\in\widetilde{\mathcal{L}}_{\alpha}$ can be endowed with space-time continuous local times $(L^{y}_{t}(\tilde{\gamma}))_{0\leq t\leq T(\tilde{\gamma}),
y\in\widetilde{\mathcal{G}}}$ relative to the measure $m$. The occupation field 
$(\widehat{\mathcal{L}}_{\alpha}^{y})_{y\in\widetilde{\mathcal{G}}}$ is defined as
\begin{displaymath}
\widehat{\mathcal{L}}_{\alpha}^{y}=
\sum_{\tilde{\gamma}\in\widetilde{\mathcal{L}}_{\alpha}}
L^{y}_{T(\tilde{\gamma})}(\tilde{\gamma}).
\end{displaymath}
The restriction of $(\widehat{\mathcal{L}}_{\alpha}^{y})_{y\in\widetilde{\mathcal{G}}}$
to the set of vertices $V$ has the same law as the occupation field of the discrete loops $\mathcal{L}_{\alpha}$, hence the same notation.
As in the discrete case, at $\alpha=\frac{1}{2}$, $(\widehat{\mathcal{L}}_{1/2}^{y})_{y\in\widetilde{\mathcal{G}}}$ has the same law as 
$(\frac{1}{2}\phi^{2}_{y})_{y\in \widetilde{\mathcal{G}}}$ (see Theorem $3.1$ in \cite{FitzsimmonsRosen2012LoopsIsomorphism}).

The discrete-space loops of $\mathcal{L}_{\alpha}$ can be obtained from the continuous loops $\widetilde{\mathcal{L}}_{\alpha}$ by taking the print of the latter on $V$. This is described in 
\cite{FitzsimmonsRosen2012LoopsIsomorphism}, Section $7.3$, or in a less general situation of the restriction of the loops of one-dimensional diffusions to a discrete subset in \cite{Lupu20131dimLoops}, Section $3.7$. We explain how the restriction  from $\widetilde{\mathcal{G}}$
to $V$ works. First of all we consider only the subset
$\lbrace \tilde{\gamma}\in\widetilde{\mathcal{L}}_{\alpha}\vert \tilde{\gamma}~\text{visits}~V\rbrace$ because the print of other loops on $V$ is empty. Next we re-root the loops so as to have the starting point in $V$: to each loop $\tilde{\gamma}$ visiting $V$ we associate a uniform r.v. on $(0,1)$ $U_{\tilde{\gamma}}$, these different r.v.'s being independent conditional on the loops. We introduce the time
\begin{displaymath}
\tau^{V}(\tilde{\gamma}):=\inf\Big\lbrace t\in [0, T(\tilde{\gamma})]
\Big\vert\sum_{x\in V} L^{x}_{t}(\tilde{\gamma})\geq
U_{\tilde{\gamma}}\sum_{x\in V}L^{x}_{T(\tilde{\gamma})}(\tilde{\gamma})\Big\rbrace.
\end{displaymath}
For each loop $\tilde{\gamma}$ visiting $V$ we make a rotation of parametrization so as to have the starting and end-time at $\tau^{V}(\tilde{\gamma})$ instead of $0$. Let $\widetilde{\mathcal{L}}'$
be the set of the new re-parametrized loops. For each 
$\tilde{\gamma}'\in\widetilde{\mathcal{L}}'$ and 
$\ell\in\Big[0,\sum_{x\in V}L^{x}_{T(\tilde{\gamma}')}(\tilde{\gamma}')\Big]$ we define
\begin{displaymath}
\tau^{V}_{\ell}(\tilde{\gamma}'):=
\inf\Big\lbrace t\in [0, T(\tilde{\gamma}')]
\Big\vert\sum_{x\in V} L^{x}_{t}(\tilde{\gamma}')\geq \ell\Big\rbrace.
\end{displaymath}
The set of $V$-valued loops
\begin{displaymath}
\left\lbrace (\tilde{\gamma}'
_{\tau^{V}_{\ell}(\tilde{\gamma}')})_{0\leq \ell\leq\sum_{x\in V}L^{x}
_{T(\tilde{\gamma}')}(\tilde{\gamma}')}\vert 
\tilde{\gamma}'\in\widetilde{\mathcal{L}}'\right\rbrace
\end{displaymath}
has the same law as $\mathcal{L}_{\alpha}$.

Next we explain how to reconstruct 
$\lbrace \tilde{\gamma}\in\widetilde{\mathcal{L}}_{\alpha}\vert \tilde{\gamma}~\text{visits}~V\rbrace$ from $\mathcal{L}_{\alpha}$ by adding random excursions to the discrete-space loops. We won't give the proof of this. For elements supporting what we explain see \cite{LeJan2011Loops}, Chapter $7$, and \cite{Lupu20131dimLoops}, Corollary $3.12$. Let $x_{0}\in V$ and 
$\lbrace x_{1},\dots,x_{\deg(x_{0})}\rbrace$ the vertices adjacent to $x_{0}$. Let $\eta_{+}$ be the intensity measure of positive Brownian excursions. To every loop $\gamma\in\mathcal{L}_{\alpha}$ spending a time $\ell$ in $x_{0}$ before jumping to one of its neighbours or before stopping one has to add excursions from $x_{0}$ to $x_{0}$ in
$\bigcup_{i=1}^{\deg(x_{0})}I_{\lbrace x_{0},x_{i}\rbrace}$ according to a Poisson point process, the intensity of excursions that take place inside the edge $I_{\lbrace x_{0},x_{i}\rbrace}$ being
\begin{equation}
\label{SecCouplComplex: EqIntensity}
\ell\times 1_{\text{height excursion}~<
\rho(\lbrace x_{0},x_{i}\rbrace)} \eta_{+}.
\end{equation}
Let $x,y$ be two adjacent vertices. Whenever a loop $\gamma\in\mathcal{L}_{\alpha}$ jumps from $x$ to $y$ one has to add a Brownian excursion from $x$ to $y$ inside $I_{\lbrace x,y\rbrace}$ (a Brownian excursion from $0$ to $a>0$ is a Bessel-$3$ process started from $0$ run until hitting $a$). All the added excursions have to be independent conditional on $\mathcal{L}_{\alpha}$. At this stage we get a Poisson point process of continuous loops in $\widetilde{\mathcal{G}}$, but all have a starting point lying in $V$. The final step is to choose for each a new random starting point distributed uniformly on their duration. What we get has the law of
$\lbrace \tilde{\gamma}\in\widetilde{\mathcal{L}}_{\alpha}\vert \tilde{\gamma}~\text{visits}~V\rbrace$.

From now on we assume that $\mathcal{L}_{\alpha}$ and $\widetilde{\mathcal{L}}_{\alpha}$ are naturally coupled on the same probability space through restriction. $\widetilde{\mathcal{L}}_{\alpha}$ has loop clusters and we will denote by $\widetilde{\mathfrak{C}}_{\alpha}$ the set of these clusters. Obviously each cluster of $\mathcal{L}_{\alpha}$ is contained in a cluster of $\widetilde{\mathcal{L}}_{\alpha}$, but with positive probability a cluster of $\widetilde{\mathcal{L}}_{\alpha}$ may contain several clusters of $\mathcal{L}_{\alpha}$. We will prove the following:

\begin{proposition}
\label{PropCouplingCont}
There is a coupling between the Poisson ensemble of loops $\widetilde{\mathcal{L}}_{1/2}$ and a continuous version of the Gaussian free field $(\phi_{y})_{y\in\widetilde{\mathcal{G}}}$ such that the two constraints hold:
\begin{itemize}
\item For all $y\in \widetilde{\mathcal{G}}$, 
$\widehat{\mathcal{L}}^{y}_{1/2}=\frac{1}{2}\phi_{y}^{2}$;
\item The clusters of loops of $\widetilde{\mathcal{L}}_{1/2}$ are exactly the sign clusters of $(\phi_{y})_{y\in\widetilde{\mathcal{G}}}$.
\end{itemize}
\end{proposition}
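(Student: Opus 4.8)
The plan is to exploit the one-dimensional structure of the metric graph. The key observation, already hinted at in the text, is that on each edge $I_e$ the restriction of the loop soup $\widetilde{\mathcal{L}}_{1/2}$ behaves like a loop soup of a one-dimensional diffusion (Brownian motion on an interval), for which Proposition 4.7 of \cite{Lupu20131dimLoops} identifies the loop clusters with the excursions of the occupation field away from zero. So on each edge, conditionally on the values of the occupation field at the two endpoints and on which endpoints are ``connected through the edge'' by the loops, the picture is completely understood. The global strategy is therefore: first establish the coupling at the level of the vertices $V$ (this is essentially Theorem \ref{ThmAltDescCoupl}, or can be taken as given from the discrete isomorphism plus an explicit recipe for the signs), then extend it edge by edge using the one-dimensional result, checking that the two constraints propagate.

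**First I would** recall precisely the one-dimensional input. On an interval $[0,\rho(e)]$, run a Brownian motion killed at the endpoints; the associated loop soup at parameter $1/2$ has occupation field equal in law to $\frac12\phi^2$ for the corresponding GFF (a Brownian bridge-type field), and moreover the zero set of the occupation field coincides with the zero set of $|\phi|$, the excursion intervals of the occupation field above zero are exactly the loop clusters, and conditionally on the occupation field the sign of $\phi$ is chosen independently and uniformly on each excursion interval. One also needs the boundary analogue: if we condition on the occupation times $\ell_x,\ell_y$ at the two endpoints, the probability that some loop or chain of loops crosses the whole edge (equivalently, that the occupation field stays strictly positive on all of $I_e$) is $1-\exp(-2C(e)\sqrt{\ell_x\ell_y})$ — this is where the choice $\rho(e)=1/(2C(e))$ and the formula in Theorem \ref{ThmAltDescCoupl} come from. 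All of this is a computation with Brownian local times, Itô's excursion theory and the Ray–Knight theorems, and I would cite \cite{Lupu20131dimLoops} rather than redo it.

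**Then the construction is:** sample $\widetilde{\mathcal{L}}_{1/2}$, read off its occupation field $\widehat{\mathcal{L}}^y_{1/2}$ (which on $V$ has the discrete-soup law, and everywhere is continuous and vanishes precisely on the boundary of the sign clusters of the would-be field). Assign to each cluster $\widetilde{\mathcal{C}}\in\widetilde{\mathfrak{C}}_{1/2}$ an independent fair sign $\sigma(\widetilde{\mathcal{C}})$, and set $\phi_y=\sigma(\widetilde{\mathcal{C}}(y))\sqrt{2\widehat{\mathcal{L}}^y_{1/2}}$ on the union of the clusters, and $\phi_y=0$ on the complement (the set of $y$ with $\widehat{\mathcal{L}}^y_{1/2}=0$, which has zero Lebesgue measure but matters for continuity). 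By construction the first constraint $\widehat{\mathcal{L}}^y_{1/2}=\frac12\phi_y^2$ holds identically, and the second constraint (loop clusters $=$ sign clusters) holds because $\phi$ vanishes exactly on $\partial\widetilde{\mathcal{C}}$ for each cluster and is of constant sign on each $\widetilde{\mathcal{C}}$, so no two distinct clusters get the sign-glued together and conversely each sign cluster of $\phi$ is a single loop cluster. What remains is to check that the $\phi$ so produced really is the continuous GFF on $\widetilde{\mathcal{G}}$ with covariance $G$.

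**The main obstacle** is exactly this last verification: that the randomly-signed square-root of the loop occupation field has the law of the metric-graph GFF. Here I would argue as follows. On $V$ this is the discrete isomorphism theorem of Le Jan together with the correct identification of the conditional law of the signs — i.e. Theorem \ref{ThmAltDescCoupl}; one checks that conditionally on $(\phi_x^2)_{x\in V}=(2\widehat{\mathcal{L}}^x_{1/2})_{x\in V}$ the signs produced by the loop-cluster-plus-edge-percolation recipe are distributed as the conditional signs of a genuine GFF, which is a finite-dimensional Gaussian computation (the conditional law of $\operatorname{sgn}\phi$ given $|\phi|$ for a Gaussian vector factors over the ``same-sign'' equivalence classes with weights governed by the off-diagonal covariances, and these match the edge-opening probabilities $1-\exp(-2C(x,y)\sqrt{\ell_x\ell_y})$). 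Then, conditionally on the values on $V$, one must show that on each edge the field $\phi$ built from the within-edge loops and within-edge signs is an independent Brownian bridge of the stated variance interpolating the endpoint values — and this is precisely the one-dimensional statement recalled above, applied edge by edge using the fact that the restriction of $\widetilde{\mathcal{L}}_{1/2}$ to the interior of an edge, conditioned on the endpoint local times, is a soup of excursions with the Brownian-excursion intensity \eqref{SecCouplComplex: EqIntensity}. Patching the within-edge conditional laws with the correct law on $V$ gives, by the explicit description of the metric-graph GFF stated just before the proposition, the full field. The one subtlety to handle carefully is the measure-zero set $\{\widehat{\mathcal{L}}^y_{1/2}=0\}$: one must confirm that $\phi$ as defined is continuous across it (it is, since $\sqrt{2\widehat{\mathcal{L}}^y_{1/2}}\to 0$ there and the sign flips only happen at such points), so that $\phi$ is a bona fide continuous version; and one must confirm that the sign clusters are well-defined, i.e.\ that the open set $\{\widehat{\mathcal{L}}^y_{1/2}>0\}$ has connected components whose closures are the $\widetilde{\mathcal{C}}$'s, which follows from the one-dimensional excursion picture on each edge glued along the vertices.
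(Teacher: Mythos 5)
Your construction differs from the paper's in a way that creates real extra work, and the crux of that work is missing. The paper's proof of Proposition \ref{PropCouplingCont} never fixes the signs to be i.i.d.\ fair coins per cluster: it takes a continuous realisation of the occupation field (Lemma \ref{LemContOccupField}), sets $\vert\phi\vert:=\sqrt{2\widehat{\mathcal{L}}_{\frac{1}{2}}}$ — legitimate because the equality in law of $\widehat{\mathcal{L}}_{\frac{1}{2}}$ with $\frac{1}{2}\phi^{2}$ on $\widetilde{\mathcal{G}}$ is already known from Fitzsimmons--Rosen — and then samples the sign of $\phi$ from its conditional law given $\vert\phi\vert$, \emph{whatever that law is}. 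With Lemma \ref{LemClustPosComp} (clusters $=$ positive components of the occupation field) the two constraints follow, and no description of the conditional sign law is ever needed for the proposition; that description (Lemma \ref{LemSignGFF}) is only proved afterwards, for the discrete restatement. You instead decree the signs to be independent and uniform per cluster, so you are obliged to prove that the resulting field is a GFF, i.e.\ you must prove Lemma \ref{LemSignGFF} (or the discrete Theorem \ref{ThmAltDescCoupl} bis) as an input rather than an output — and note that taking Theorem \ref{ThmAltDescCoupl} bis ``as given'' would be circular, since in the paper it is deduced from the very proposition you are proving; only the independent Section \ref{SecAltProof} argument could serve here.

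The genuine gap is your claim that this verification at the level of $V$ is ``a finite-dimensional Gaussian computation'' in which conditional sign weights ``match the edge-opening probabilities.'' The conditional law of $\operatorname{sign}(\psi)$ given $\vert\psi\vert$ for the discrete GFF is an Ising-type measure with couplings $C(x,y)\vert\psi_{x}\psi_{y}\vert$; but the sign law produced by your recipe involves the conditional law of the loop-cluster partition $\mathfrak{C}_{\frac{1}{2}}$ given the occupation field, which is a loop-soup quantity, not something read off the covariance matrix. Matching the two is exactly the content of the paper's Section \ref{SecAltProof}: the determinant identity \eqref{SecAltProof: EqQuotDet} for the probability that loops avoid prescribed edges, the inclusion--exclusion of Lemma \ref{LemGFFAbsEdges}, and the Edwards--Sokal-type factorisation $1-e^{-C(\vert\psi_{x}\psi_{y}\vert+\psi_{x}\psi_{y})}=1_{\psi_{x}\psi_{y}>0}(1-e^{-2C\vert\psi_{x}\psi_{y}\vert})$; none of this is supplied or even sketched in your proposal, and without it your field is not known to be Gaussian. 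Two secondary points: the continuity of a realisation of the occupation field of $\widetilde{\mathcal{L}}_{\frac{1}{2}}$, which you assert in passing, is not automatic (infinitely many loops sit above every point; the paper devotes Lemma \ref{LemContOccupField} to it and recalls a counterexample in a related setting), and the crossing probability $1-\exp\bigl(-2C(e)\sqrt{\ell_{x}\ell_{y}}\bigr)$ is derived in this paper (Property \ref{PropertyIndep} through Corollary \ref{CorProbAbsEdge}), not available as a citation to the one-dimensional reference.
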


Theorem \ref{ThmCoupling} follows from the above proposition because the restriction of $(\widehat{\mathcal{L}}^{y}_{1/2})_{y\in \widetilde{\mathcal{G}}}$ to $V$ is 
$(\widehat{\mathcal{L}}^{x}_{1/2})_{x\in V}$, the restriction of
$(\phi_{y})_{y\in\widetilde{\mathcal{G}}}$ to $V$ is the Gaussian free field on $\mathcal{G}$ and the sign of $\phi$ is constant on the clusters of $\widetilde{\mathcal{L}}_{1/2}$, hence also constant on the clusters of $\mathcal{L}_{1/2}$. 

The first step in proving Proposition \ref{PropCouplingCont} is to show that there is a realisation of $\widetilde{\mathcal{L}}_{\alpha}$ such that its occupation field 
$(\widehat{\mathcal{L}}^{y}_{\alpha})_{y\in \widetilde{\mathcal{G}}}$ is continuous. We know already that each
individual loop in $\widetilde{\mathcal{L}}_{\alpha}$ has space-time continuous local times and that the process $(\widehat{\mathcal{L}}^{y}_{\alpha})_{y\in \widetilde{\mathcal{G}}}$ considered for itself, regardless of the loops, has a continuous version (see \cite{Lupu20131dimLoops}, Section $4.2$). However this does not automatically imply that a realisation of 
$(\widehat{\mathcal{L}}^{y}_{\alpha})_
{y\in \widetilde{\mathcal{G}}}$ as the occupation field of 
$\widetilde{\mathcal{L}}_{\alpha}$ can be made continuous (there are infinitely many loops above each point in $\widetilde{\mathcal{G}}$ and the occupation field is an infinite sum of continuous functions). A counterexample is given in \cite{Lupu20131dimLoops}, Section $4.2$, the remark after Proposition $4.6$.

\begin{lemma}
\label{LemContOccupField}
There is a realisation of $\widetilde{\mathcal{L}}_{\alpha}$ such that its occupation field 
$(\widehat{\mathcal{L}}^{y}_{\alpha})_{y\in \widetilde{\mathcal{G}}}$ is continuous.
\end{lemma}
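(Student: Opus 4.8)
We want to show the occupation field $(\widehat{\mathcal{L}}^{y}_{\alpha})_{y\in\widetilde{\mathcal{G}}}$ of the continuous loop soup admits a realisation that is genuinely continuous (as a function on the cable system), not merely a sum of continuous functions that happens to have a continuous version after modification. My plan is to decompose the soup according to a spatial cutoff on the loops and control the tail of this decomposition locally and uniformly.

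\medskip

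\emph{Step 1: localise and split off the small loops.} Fix a relatively compact open set $\mathcal{O}\subset\widetilde{\mathcal{G}}$ (e.g.\ a finite union of edges and the vertices they touch). For $\varepsilon>0$ let $\widetilde{\mathcal{L}}_\alpha^{\varepsilon,\mathcal{O}}$ be the loops of $\widetilde{\mathcal{L}}_\alpha$ that are contained in $\mathcal{O}$ and have diameter at most $\varepsilon$, and let $\widehat{\mathcal{L}}^{\,y,\varepsilon,\mathcal{O}}_\alpha$ be their occupation field. Because $\mu$-a.e.\ loop of the cable-system soup is topologically trivial and the soup restricted to $\mathcal{O}$ (killed on exiting $\mathcal{O}$) is again a soup of the type covered by \cite{FitzsimmonsRosen2012LoopsIsomorphism}, the complementary part $\widehat{\mathcal{L}}_\alpha^{y}-\widehat{\mathcal{L}}^{\,y,\varepsilon,\mathcal{O}}_\alpha$ on $\mathcal{O}$ is a \emph{finite} sum (for each bounded sub-collection) plus the occupation field of loops either hitting $\partial\mathcal{O}$ or of macroscopic size; this part is a finite, hence continuous, sum of continuous local-time functions on $\mathcal{O}$, away from $\partial\mathcal{O}$. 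So the whole question reduces to continuity of $y\mapsto\widehat{\mathcal{L}}^{\,y,\varepsilon,\mathcal{O}}_\alpha$ for $\varepsilon$ small.

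\medskip

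\emph{Step 2: moment/tail estimate for the small-loop field.} The key point is that the occupation field of the small loops is small, uniformly in $y\in\mathcal{O}$, with high probability, and vanishes as $\varepsilon\to 0$. One computes, via the master formula (Palm calculus) for Poisson point processes and the explicit form of $\mu$ on the cable system, the Laplace transform or low-order moments of $\widehat{\mathcal{L}}^{\,y,\varepsilon,\mathcal{O}}_\alpha$; equivalently one may use Le Jan's isomorphism $\widehat{\mathcal{L}}^{y}_{1/2}=\tfrac12\phi_y^2$ locally, or Marcus--Rosen type continuity criteria for the associated Gaussian field, to get that $\sup_{y\in\mathcal{O}}\widehat{\mathcal{L}}^{\,y,\varepsilon,\mathcal{O}}_\alpha\to 0$ in probability as $\varepsilon\to 0$. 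Concretely: the local time at $y$ of a single Brownian loop of diameter $\le\varepsilon$ through $y$ is $O(\varepsilon)$ in a suitable sense, the $\mu$-mass of such loops through a fixed point is $O(1)$, and summing over the Poisson soup and taking a union bound over an $\varepsilon$-net of $\mathcal{O}$ (using a continuity modulus for each retained term) gives a uniform bound tending to $0$.

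\medskip

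\emph{Step 3: pass from ``vanishes uniformly'' to ``continuous realisation''.} Write $\widehat{\mathcal{L}}_\alpha^{y}=\widehat{\mathcal{L}}^{\,y,\varepsilon,\mathcal{O}}_\alpha+R^{y}_{\varepsilon,\mathcal{O}}$ on $\mathcal{O}$, where $R_{\varepsilon,\mathcal{O}}$ is the occupation field of the remaining loops on $\mathcal{O}$. For fixed $\varepsilon$, $R_{\varepsilon,\mathcal{O}}$ involves only loops of diameter $>\varepsilon$ inside $\mathcal{O}$ together with loops hitting $\partial\mathcal{O}$; there are a.s.\ finitely many of the former intersecting any compact subset of $\mathcal{O}$, and the latter contribute a harmonic-type (hence continuous) piece in the interior, so $R_{\varepsilon,\mathcal{O}}$ has a continuous version on compacts of $\mathcal{O}$. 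Since, by Step 2, $\sup_{y}\widehat{\mathcal{L}}^{\,y,\varepsilon,\mathcal{O}}_\alpha\to 0$ a.s.\ along a subsequence $\varepsilon_n\downarrow 0$, the function $\widehat{\mathcal{L}}^{y}_\alpha$ is, on each compact of $\mathcal{O}$, a uniform limit of continuous functions, hence continuous there. Exhausting $\widetilde{\mathcal{G}}$ by such sets $\mathcal{O}$ and using that continuity is a local property gives a globally continuous realisation.

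\medskip

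\emph{Main obstacle.} The delicate part is Step 2: obtaining a bound on $\sup_{y\in\mathcal{O}}\widehat{\mathcal{L}}^{\,y,\varepsilon,\mathcal{O}}_\alpha$ that is uniform in $y$ and quantitative in $\varepsilon$. A pointwise-in-$y$ tail bound is easy from the Laplace transform, but to control the supremum one must either invoke a chaining/Gaussian-comparison argument (via the isomorphism, using the continuity of $\phi$ on $\widetilde{\mathcal{G}}$ already established in the excerpt) or prove an a.s.\ modulus of continuity for $\widehat{\mathcal{L}}^{\,\cdot,\varepsilon,\mathcal{O}}_\alpha$ directly. I expect the cleanest route is to use the already-known continuity of the Gaussian free field on the cable system together with Le Jan's isomorphism on the set $\mathcal{O}$ to transfer continuity to the occupation field, and then the soup-decomposition above to upgrade this to a statement about the \emph{realisation} as an infinite sum over loops.
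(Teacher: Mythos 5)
Your Steps 1 and 3 set up a reasonable reduction (big loops and boundary-hitting loops contribute, on a compact subset of $\mathcal{O}$, only finitely many continuous local-time terms; a uniform-limit argument would then finish), but the entire difficulty of the lemma is concentrated in your Step 2, and neither of the two justifications you sketch for it works. First, invoking Le Jan's isomorphism or the continuity of $\phi$ on $\widetilde{\mathcal{G}}$ only tells you that the occupation field, viewed as a process in law, admits a continuous \emph{version}; it says nothing about whether the particular realisation given by the Poissonian sum $\sum_{\tilde\gamma}L^{y}_{T(\tilde\gamma)}(\tilde\gamma)$ coincides with that version everywhere. That is precisely the subtlety the lemma addresses: the paper points out (remark after proposition $4.5$ in the one-dimensional paper it cites) that for some loop ensembles the summed occupation field genuinely fails to be continuous even though a continuous version in law exists, so no ``transfer of continuity through the isomorphism'' can close the argument without further input. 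Second, the union-bound/chaining sketch does not survive quantification: if you try to dominate $\sup_{y}\widehat{\mathcal{L}}^{\,y,\varepsilon,\mathcal{O}}_\alpha$ by $\sum_{\tilde\gamma\ \mathrm{small}}\sup_{y}L^{y}(\tilde\gamma)$, the relevant intensity integral is of order $\int_{\mathcal{O}}m(dx)\int_{0}^{\delta}\frac{dt}{t}\,p_{t}(x,x)\,\mathbb{E}^{t}_{x,x}\bigl[\sup_{y}L^{y}_{t}\bigr]\sim c\int_{0}^{\delta}\frac{dt}{t}=+\infty$, since $p_{t}(x,x)\asymp t^{-1/2}$ and the sup of the bridge local time is of order $t^{1/2}$; so the sum of the sups diverges almost surely and the triangle-inequality route collapses. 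Any proof of your Step 2 must exploit that the small loops attain their maxima at different points, i.e., it needs exactly the kind of fine analysis the lemma is supposed to provide, so as written the argument is circular at its key point. (Note also that Dini's theorem cannot be used to get the uniform convergence, because the monotone family $\varepsilon\mapsto\widehat{\mathcal{L}}^{\,\cdot,\varepsilon,\mathcal{O}}_\alpha$ consists of fields whose continuity is as unknown as that of the full field.)

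For comparison, the paper resolves this crux by decomposing according to the vertex set rather than by loop diameter: loops visiting at least two vertices are locally finite in number; for loops visiting exactly one vertex $x_{0}$, their excursions into each adjacent edge form a Poisson process of Brownian excursions with intensity proportional to the total local time at $x_{0}$, so their summed occupation field is the local-time field of a Brownian-type excursion process and is continuous by continuity of Brownian local times; and the loops contained in the interior of a single edge are handled by a dedicated one-dimensional result (corollary $5.5$ of the cited paper on loop ensembles of one-dimensional diffusions), which is exactly the nontrivial ingredient your proposal lacks a substitute for.
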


\begin{proof}
We divide the loops of $\widetilde{\mathcal{L}}_{\alpha}$ into three classes:
\begin{itemize}
\item[(i)] the loops that visit at least two vertices in $V$;
\item[(ii)] the loops that visit only one vertex in $V$;
\item[(iii)] the loops that do not visit any vertex and are contained in the interior of an edge.
\end{itemize}

Above any vertex $x\in V$ are only finitely many loops of type (i) (see \cite{LeJan2011Loops} Chapter $2$ for the exact expression of their intensity). Each individual loop of type (i) has a continuous occupation field and the sum of this occupation fields is locally finite and therefore continuous.

Let $x_{0}\in V$ and 
$\lbrace x_{1},\dots,x_{\deg(x_{0})}\rbrace$ the vertices adjacent to $x_{0}$. We consider now the loops of type (ii) such that $x_{0}$ is the only vertex they visit, which we denote $(\tilde{\gamma}_{j})_{j\geq 0}$. Conditional on $L^{x_{0}}_{T(\tilde{\gamma}_{j})}(\tilde{\gamma}_{j})$, $\tilde{\gamma}_{j}$ is obtained by launching excursion from $x_{0}$ to $x_{0}$ in $\bigcup_{i=1}^{\deg(x_{0})}I_{\lbrace x_{0},x_{i}\rbrace}$ according to a Poisson point process, the intensity of excursions that take place inside the edge 
$I_{\lbrace x_{0},x_{i}\rbrace}$ being 
[see \eqref{SecCouplComplex: EqIntensity}]
\begin{displaymath}
L^{x_{0}}_{T(\tilde{\gamma}_{j})}(\tilde{\gamma}_{j})
\times 1_{\text{height excursion}~<\rho(\lbrace x_{0},x_{i}\rbrace)} \eta_{+}.
\end{displaymath}
If we consider all the loops $(\tilde{\gamma}_{j})_{j\geq 0}$ we obtain an intensity
\begin{displaymath}
\Big(\sum_{j\geq 0}L^{x_{0}}_{T(\tilde{\gamma}_{j})}(\tilde{\gamma}_{j})\Big)
\times 1_{\text{height excursion}~<\rho(\lbrace x_{0},x_{i}\rbrace)} \eta_{+}.
\end{displaymath}
The continuity of the occupation field of 
$(\tilde{\gamma}_{j})_{j\geq 0}$ follows from the continuity of Brownian local times.

Let $e$ be an edge. We consider the loops of type (iii) that are contained in $I_{e}$. They have the same law as a Poisson ensemble of loops of parameter $\alpha$ associated to the standard Brownian motion on the bounded interval $I_{e}$ killed upon reaching either of its boundary points. This situation was entirely covered in \cite{Lupu20131dimLoops}. According to Corollary $5.5$ in \cite{Lupu20131dimLoops} it is possible to construct these loops and a continuous version on their occupation field on the same probability space. All the subtlety of our lemma lies in this point. Moreover according to Proposition $4.7$ in \cite{Lupu20131dimLoops} the occupation field of these loops converges to $0$ at the end-vertices of $I_{e}$.
\end{proof}

From now on we consider only the continuous realization of the occupation field 
$(\widehat{\mathcal{L}}^{y}_{\alpha})_{y\in \widetilde{\mathcal{G}}}$. We call a positive component of $(\widehat{\mathcal{L}}^{y}_{\alpha})_{y\in \widetilde{\mathcal{G}}}$ a maximal connected subset of $\widetilde{\mathcal{G}}$ on which the occupation field is positive. It is open and by continuity the occupation field is zero on the boundary of a positive component. Given a continuous loop $\tilde{\gamma}$, 
$\operatorname{Range}(\tilde{\gamma})$ will denote its range.

\begin{lemma}
\label{LemClustPosComp}
Let $\widetilde{\mathcal{C}}\in\widetilde{\mathfrak{C}}_{\alpha}$ be a cluster of 
$\widetilde{\mathcal{L}}_{\alpha}$. Then
\begin{displaymath}
\bigcup_{\tilde{\gamma}\in\widetilde{\mathcal{C}}}
\operatorname{Range}(\tilde{\gamma})
\end{displaymath}
is a positive component of $(\widehat{\mathcal{L}}^{y}_{\alpha})_{y\in \widetilde{\mathcal{G}}}$. Conversely every positive component of $(\widehat{\mathcal{L}}^{y}_{\alpha})_{y\in \widetilde{\mathcal{G}}}$ is of this form.
\end{lemma}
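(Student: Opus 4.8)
The plan is to establish the two inclusions of the asserted equality of sets, using the continuity of the occupation field established in Lemma~\ref{LemContOccupField} as the main tool. First I would fix a cluster $\widetilde{\mathcal{C}}\in\widetilde{\mathfrak{C}}_{\alpha}$ and set $R:=\bigcup_{\tilde{\gamma}\in\widetilde{\mathcal{C}}}Range(\tilde{\gamma})$. Since each $\tilde{\gamma}\in\widetilde{\mathcal{C}}$ has space-time continuous local times with respect to $m$, its range $Range(\tilde{\gamma})$ is a connected compact subset of $\widetilde{\mathcal{G}}$ on which $L^{\cdot}_{T(\tilde{\gamma})}(\tilde{\gamma})>0$; moreover, by the definition of loop clusters, any two loops in $\widetilde{\mathcal{C}}$ are linked by a finite chain of loops in $\widetilde{\mathcal{C}}$ with consecutive members sharing a point, so $R$ is connected. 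On $R$ we have $\widehat{\mathcal{L}}^{y}_{\alpha}\geq L^{y}_{T(\tilde{\gamma})}(\tilde{\gamma})>0$ for a suitable $\tilde{\gamma}$, hence $R$ is contained in a positive component $O$ of the occupation field.

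The crux is the reverse inclusion $O\subseteq R$, i.e.\ that a positive component cannot be strictly larger than the range of the cluster of loops carrying it. Here I would argue by contradiction: suppose $O\setminus R\neq\emptyset$ and pick $z\in O\setminus R$. Since $O$ is connected and open in $\widetilde{\mathcal{G}}$ (a locally connected space), one can find a path inside $O$ from a point of $R$ to $z$, and along it there is a point $w\in O$ lying in the topological boundary of $R$ relative to $O$. The key point is that only loops in $\widetilde{\mathcal{C}}$ can contribute to $\widehat{\mathcal{L}}^{y}_{\alpha}$ for $y$ in a neighbourhood of $R$: any loop $\tilde{\gamma}'$ whose range meets $R$ shares a point with some loop of $\widetilde{\mathcal{C}}$, hence belongs to $\widetilde{\mathcal{C}}$ by definition of the cluster. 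Therefore, in a neighbourhood of $w$, the occupation field equals $\sum_{\tilde{\gamma}\in\widetilde{\mathcal{C}}}L^{\cdot}_{T(\tilde{\gamma})}(\tilde{\gamma})$, which vanishes at every point of $\widetilde{\mathcal{G}}\setminus R$. By continuity of the occupation field, $\widehat{\mathcal{L}}^{w}_{\alpha}=0$, contradicting $w\in O$. Hence $O=R$.

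Finally, for the "conversely" clause, let $O$ be an arbitrary positive component. Since $\widehat{\mathcal{L}}^{y}_{\alpha}=\sum_{\tilde{\gamma}\in\widetilde{\mathcal{L}}_{\alpha}}L^{y}_{T(\tilde{\gamma})}(\tilde{\gamma})>0$ at any $y\in O$, there is at least one loop $\tilde{\gamma}$ with $y\in Range(\tilde{\gamma})$; let $\widetilde{\mathcal{C}}$ be its cluster. By the first part, $\bigcup_{\tilde{\gamma}'\in\widetilde{\mathcal{C}}}Range(\tilde{\gamma}')$ is a positive component containing $y$, and since positive components are the connected components of the open set $\{\widehat{\mathcal{L}}^{\cdot}_{\alpha}>0\}$, distinct positive components are disjoint; as both contain $y$ they coincide with $O$. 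Thus every positive component arises from a cluster.

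I expect the main obstacle to be the reverse inclusion, and specifically the justification that loops touching $R$ must lie in the cluster $\widetilde{\mathcal{C}}$ together with the local-finiteness needed to make "the occupation field near $w$ equals the sum over $\widetilde{\mathcal{C}}$" a legitimate continuous function: one should note that near a compact piece of $R$ only finitely many loops of type (i) contribute, the type (ii) and type (iii) contributions are handled as in Lemma~\ref{LemContOccupField}, and loops disjoint from $R$ contribute nothing there. Care is also needed because $R$ itself is a countable union of compacts and need not be closed a priori; the continuity argument at the boundary point $w$ is exactly what forces $\overline{R}\cap O=R\cap O$ and closes the gap.
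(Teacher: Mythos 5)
Your first inclusion already contains a genuine gap. At a point $y\in Range(\tilde{\gamma})$ the local time $L^{y}_{T(\tilde{\gamma})}(\tilde{\gamma})$ need not be positive: it vanishes on $\partial Range(\tilde{\gamma})$ (for instance at the maximal height reached by an excursion of $\tilde{\gamma}$ inside an edge), exactly as the occupation field of a one-dimensional Brownian path vanishes at its maximum and minimum. So the assertion ``$\widehat{\mathcal{L}}^{y}_{\alpha}\geq L^{y}_{T(\tilde{\gamma})}(\tilde{\gamma})>0$ for a suitable $\tilde{\gamma}$'' fails on these boundary points, and a priori two loops of a cluster could touch only at a point where both of their local times vanish, in which case $R=\bigcup_{\tilde{\gamma}\in\widetilde{\mathcal{C}}}Range(\tilde{\gamma})$ would be split between two distinct positive components instead of being one. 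The paper closes precisely this hole with its property (ii): almost surely, for every $\tilde{\gamma}\in\widetilde{\mathcal{L}}_{\alpha}$ each point of the (finite) set $\partial Range(\tilde{\gamma})$ lies in the \emph{interior} of the range of another loop, which then automatically belongs to the same cluster; this is proved by Palm's identity together with the fact that any fixed point of $\widetilde{\mathcal{G}}$ is a.s. covered by the interior of the range of some loop. This is the real content of the lemma, and your proposal neither states nor proves anything of this kind.

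The reverse inclusion is also broken as written: the claim that only loops of $\widetilde{\mathcal{C}}$ contribute to $\widehat{\mathcal{L}}^{y}_{\alpha}$ for $y$ in a neighbourhood of $R$ is false, since a loop meeting a neighbourhood of $R$ need not meet $R$ itself (think of the infinitely many small loops of type (iii) lying in an edge adjacent to the cluster). Hence near your boundary point $w$ the occupation field is not $\sum_{\tilde{\gamma}\in\widetilde{\mathcal{C}}}L^{\cdot}_{T(\tilde{\gamma})}(\tilde{\gamma})$, and the continuity contradiction does not go through. The paper's route is different and does work: properties (i) and (ii) give, on one hand, that the zero set of the occupation field is exactly the set of points visited by no loop, and, on the other hand, that every point of $R$ lies in the interior of the range of some loop of the cluster, so that $R$ is open; the positive set is then the disjoint union over clusters of these open connected sets, and both inclusions follow simultaneously. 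Your closing paragraph correctly senses that something must be argued near $\partial R$, but the missing ingredient is the Palm-identity covering argument, not local-finiteness bookkeeping.
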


\begin{proof}
The following almost sure properties hold:
\begin{itemize}
\item[(i)] for every $\tilde{\gamma}\in\widetilde{\mathcal{L}}_{\alpha}$
the occupation field of $\tilde{\gamma}$ is positive in the interior of
$\operatorname{Range}(\tilde{\gamma})$ and zero on the boundary 
$\partial \operatorname{Range}(\tilde{\gamma})$;
\item[(ii)] for every $\tilde{\gamma}\in\widetilde{\mathcal{L}}_{\alpha}$ and $y\in\partial \operatorname{Range}(\tilde{\gamma})$, there is another loop $\tilde{\gamma}'\in\widetilde{\mathcal{L}}_{\alpha}$ such that $y$ is contained in the interior of 
$\operatorname{Range}(\tilde{\gamma}')$.
\end{itemize}

Property (i) comes from an analogous property of a finite duration one-dimensional Brownian path: its occupation field is positive on its range, except at the maximum and the minimum where it is zero.

We briefly explain why property (ii) is true. First of all the boundary $\partial \operatorname{Range}(\tilde{\gamma})$ is finite because it can intersect an edge in at most two points and a loop visits finitely many edges. Moreover any deterministic point in $\widetilde{\mathcal{G}}$ is almost surely covered by the interior of the range of a loop. Applying Palm's identity one gets (ii):
\begin{multline*}
\mathbb{E}\Big[\sharp\Big\lbrace\tilde{\gamma}\in\widetilde{\mathcal{L}}_{\alpha}\Big\vert
\partial \operatorname{Range}(\tilde{\gamma})
\not\subseteq 
\bigcup_{\tilde{\gamma}'\in \widetilde{\mathcal{L}}_{\alpha}}
\operatorname{Int}(\operatorname{Range}(\tilde{\gamma}'))
\Big\rbrace\Big]\\=
\alpha\int\mathbb{P}
\Big(
\partial \operatorname{Range}(\tilde{\gamma})
\not\subseteq 
\bigcup_{\tilde{\gamma}'\in \widetilde{\mathcal{L}}_{\alpha}}
\operatorname{Int}(\operatorname{Range}(\tilde{\gamma}'))
\Big)
\tilde{\mu}(d\tilde{\gamma})
=0.
\end{multline*}

Properties (i) and (ii) imply on one hand that the zero set of $(\widehat{\mathcal{L}}^{y}_{\alpha})_{y\in \widetilde{\mathcal{G}}}$ is exactly the set of all point in $\widetilde{\mathcal{G}}$ that are not visited by any loop in $\widetilde{\mathcal{L}}_{\alpha}$ and on the other hand that any point visited by a loop cannot belong to the boundary of a cluster of loops. This in turn implies the lemma.
\end{proof}

\begin{proof}[Proof of Proposition \ref{PropCouplingCont}]
First sample $\widetilde{\mathcal{L}}_{1/2}$ with a continuous version of its occupation field. Consider 
$\Big(\sqrt{2\widehat{\mathcal{L}}^{y}_{1/2}}\Big)
_{y\in \widetilde{\mathcal{G}}}$ as a realization of 
$(\vert\phi_{y}\vert)_{y\in \widetilde{\mathcal{G}}}$ and sample the sign of the Gaussian free field $\phi$ independently from 
$\widetilde{\mathcal{L}}_{1/2}$ conditional on
$\Big(\sqrt{2\widehat{\mathcal{L}}^{y}_{1/2}}\Big)
_{y\in \widetilde{\mathcal{G}}}$. 
Then according to Lemma \ref{LemClustPosComp} the clusters of 
$\widetilde{\mathcal{L}}_{1/2}$ are exactly the positive components of 
$(\vert\phi_{y}\vert)_{y\in \widetilde{\mathcal{G}}}$ 
which are the sign clusters of
$(\phi_{y})_{y\in \widetilde{\mathcal{G}}}$.
\end{proof}

\section{Alternative description of the coupling}
\label{SecAltDesc}

In this section we give en alternative description on the coupling between $\mathcal{L}_{1/2}$ and $(\phi_{x})_{x\in V}$ constructed in Section \ref{SecCouplComplex} but that does not use
$\widetilde{\mathcal{L}}_{1/2}$ as intermediate. First we deal with the law of the sign of $\phi$ conditional on $(\vert\phi_{y}\vert)_{y\in\widetilde{\mathcal{G}}}$. We will show that one has to chose the sign independently for each positive component of 
$(\vert\phi_{y}\vert)_{y\in\widetilde{\mathcal{G}}}$ and uniformly distributed in $\lbrace -1,+1\rbrace$. Then we will deal with the probability of a cluster of continuous loops occupying entirely an edge $e$ conditional on discrete-space loops $\mathcal{L}_{1/2}$ and on the event that none of these loops occupies $e$.

Let $K$ be a non-empty compact connected subset of $\widetilde{\mathcal{G}}$. $\partial K$ is finite, $\widetilde{\mathcal{G}}\setminus K$ has finitely many connected components and the closure of each of these connected components is itself a metric graph associated to some discrete graph. Let $T_{K}$ be the first time the Brownian motion $B^{\widetilde{\mathcal{G}}}$, started outside $K$, hits $K$. Let $(G^{\widetilde{\mathcal{G}}\setminus K}(y,z))
_{y,z\in\widetilde{\mathcal{G}}\setminus K}$ be the Green's function relative to the measure $m$ of the killed process $(B^{\widetilde{\mathcal{G}}}_{t})_{0\leq t<\tilde{\zeta}\wedge T_{K}}$.
$G^{\widetilde{\mathcal{G}}\setminus K}$ is symmetric, continuous and extends continuously to 
$\overline{\widetilde{\mathcal{G}}\setminus K}$ by taking value $0$ on the boundary. Actually $G^{\widetilde{\mathcal{G}}\setminus K}$ is obtained by interpolation from its values on the vertices and $\partial K$ as in \eqref{SecCouplComplex: EqLinInterpol}. Let $(\phi^{\widetilde{\mathcal{G}}\setminus K}_{y})_{y\in \widetilde{\mathcal{G}}\setminus K}$ be the Gaussian free field on
$\widetilde{\mathcal{G}}\setminus K$ with variance-covariance function
$G^{\widetilde{\mathcal{G}}\setminus K}$. Let $f$ be a function on $\partial K$ and $u_{f,K}$ be the following function on $\widetilde{\mathcal{G}}\setminus K$:
\begin{displaymath}
u_{f,K}(y):=\mathbb{E}_{y}\left[f(B^{\widetilde{\mathcal{G}}}_{T_{K}})
1_{T_{K}<\tilde{\zeta}}\right].
\end{displaymath}
By the Markov property of $(\phi_{y})_{y\in\widetilde{\mathcal{G}}}$, 
conditional on $(\phi_{y})_{y\in K}$,
$(\phi_{y})_{y\in\widetilde{\mathcal{G}}\setminus K}$ has the same law
as $(u_{\phi,K}(y)+\phi^{\widetilde{\mathcal{G}}\setminus K}
_{y})_{y\in\widetilde{\mathcal{G}}\setminus K}$. We consider now a random connected compact subset $\mathcal{K}$. We use the equivalent $\sigma$-algebras on the connected compact subsets:
\begin{itemize}
\item the $\sigma$-algebra induced by the events
$(\lbrace \mathcal{K}\subseteq U\rbrace)_{U~\text{open subset of}~\widetilde{\mathcal{G}}}$,
\item the $\sigma$-algebra induced by the events
$(\lbrace F\cap\mathcal{K}\neq\emptyset\rbrace)
_{F~\text{closed subset of}~\widetilde{\mathcal{G}}}$.
\end{itemize}
Below we state a strong Markov property for the Gaussian free field $(\phi_{y})_{y\in\widetilde{\mathcal{G}}}$. It can be derived from the simple Markov property; see \cite{Rozanov1982MarkovRandomFields}, Chapter $2$, Section $2.4$, Theorem $4$.

\begin{strongMarkov}
Let $\mathcal{K}$ be a random compact connected subset of
$\widetilde{\mathcal{G}}$ such that for every deterministic open
subset $U$ of $\widetilde{\mathcal{G}}$ the event 
$\lbrace\mathcal{K}\subseteq U\rbrace$ is measurable with respect to
$(\phi_{y})_{y\in U}$. Then conditional on $\mathcal{K}$ and 
$(\phi_{y})_{y\in \mathcal{K}}$, $(\phi_{y})_{y\in\widetilde{\mathcal{G}}\setminus \mathcal{K}}$ has the same law as $(u_{\phi,\mathcal{K}}(y)+
\phi^{\widetilde{\mathcal{G}}\setminus \mathcal{K}}
_{y})_{y\in\widetilde{\mathcal{G}}\setminus \mathcal{K}}$.
\end{strongMarkov}

\begin{lemma}
\label{LemChangeSignComponent}
Given $y_{0}\in\widetilde{\mathcal{G}}$ we denote by $F_{y_{0}}$ the closure of the positive component of $(\vert\phi_{y}\vert)_{y\in\widetilde{\mathcal{G}}}$ containing $y_{0}$
(a.s. $\phi_{y_{0}}\neq 0$). Then the field 
$(-1_{y\in F_{y_{0}}}\phi_{y}+1_{y\not\in F_{y_{0}}}\phi_{y})
_{y\in\widetilde{\mathcal{G}}}$ has the same law as the Gaussian free field $(\phi_{y})_{y\in\widetilde{\mathcal{G}}}$.
\end{lemma}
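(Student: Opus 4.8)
The plan is to apply the Strong Markov property of the Gaussian free field stated above to the random compact set $\mathcal{K}:=F_{y_0}$, and then to observe that flipping the sign of the field on $\mathcal{K}$ while leaving it fixed outside is a measure-preserving operation. First I would check that $F_{y_0}$ satisfies the hypothesis of the Strong Markov property: for every deterministic open set $U\subseteq\widetilde{\mathcal{G}}$, the event $\{F_{y_0}\subseteq U\}$ must be measurable with respect to $(\phi_y)_{y\in U}$. This holds because $F_{y_0}$ is the closure of the connected component of $y_0$ in the open set $\{y:\phi_y\neq 0\}$ (equivalently $\{y:|\phi_y|>0\}$); whether this component, together with its boundary, is contained in $U$ can be read off from the values of $\phi$ on $U$ alone — indeed if $F_{y_0}\subseteq U$ then the component does not touch $\partial U$, and the component is determined by $\phi|_U$, while on $\partial F_{y_0}$ the field vanishes, so no information from outside $U$ is needed. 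One small point: $\mathcal{K}$ must be connected and compact; $F_{y_0}$ is connected by construction, and a.s.\ compact because by Lemma \ref{LemClustPosComp} positive components coincide with ranges of loop clusters, which are a.s.\ bounded (each loop has finite range and, above any vertex, only finitely many loops of type (i)), so the closure is compact — alternatively one can localize by intersecting with a large ball and pass to the limit.

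Next, conditional on $\mathcal{K}=F_{y_0}$ and on $(\phi_y)_{y\in\mathcal{K}}$, the Strong Markov property tells us that $(\phi_y)_{y\in\widetilde{\mathcal{G}}\setminus\mathcal{K}}$ is distributed as $u_{\phi,\mathcal{K}}(y)+\phi^{\widetilde{\mathcal{G}}\setminus\mathcal{K}}_y$. The crucial observation is that on $\partial\mathcal{K}=\partial F_{y_0}$ the field $\phi$ vanishes (by continuity, since $F_{y_0}$ is the closure of a positive component of $|\phi|$), hence the harmonic extension $u_{\phi,\mathcal{K}}$ is identically zero. Therefore, conditional on $\mathcal{K}$ and on $(\phi_y)_{y\in\mathcal{K}}$, the exterior field $(\phi_y)_{y\in\widetilde{\mathcal{G}}\setminus\mathcal{K}}$ is simply $(\phi^{\widetilde{\mathcal{G}}\setminus\mathcal{K}}_y)_{y\in\widetilde{\mathcal{G}}\setminus\mathcal{K}}$, a centered Gaussian field independent of the interior. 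Now apply the map that replaces $(\phi_y)_{y\in\mathcal{K}}$ by $(-\phi_y)_{y\in\mathcal{K}}$ and leaves $(\phi_y)_{y\notin\mathcal{K}}$ unchanged. Since $-\phi$ restricted to $\mathcal{K}$ has the same conditional law as $\phi$ restricted to $\mathcal{K}$ (the GFF is symmetric under global sign flip, and conditioning on $\mathcal{K}$ is invariant under this flip because $F_{y_0}$ depends on $\phi$ only through $|\phi|$), and since the exterior field is unaffected and remains conditionally distributed as $\phi^{\widetilde{\mathcal{G}}\setminus\mathcal{K}}$, the whole field $(-1_{y\in F_{y_0}}\phi_y+1_{y\notin F_{y_0}}\phi_y)_{y\in\widetilde{\mathcal{G}}}$ has the same law as $(\phi_y)_{y\in\widetilde{\mathcal{G}}}$.

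Let me spell out the sign-flip invariance more carefully, as this is the technical heart. Write $\psi_y:=-1_{y\in F_{y_0}}\phi_y+1_{y\notin F_{y_0}}\phi_y$. We have $|\psi_y|=|\phi_y|$ for all $y$, so the positive components of $|\psi|$ coincide with those of $|\phi|$, and in particular $\psi$ determines the same random set $F_{y_0}$. Condition on the pair $\big(F_{y_0},(|\phi_y|)_{y\in\widetilde{\mathcal{G}}}\big)$. Given this data, the interior signs $(\mathrm{sgn}\,\phi_y)_{y\in F_{y_0}}$ are constant on $F_{y_0}$ (it is a single sign component) and, by the reasoning above together with the fact that the exterior field is the independent centered field $\phi^{\widetilde{\mathcal{G}}\setminus\mathcal{K}}$, this overall interior sign is a fair coin independent of everything exterior; flipping it is therefore distribution-preserving. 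The exterior configuration is untouched by the map. Hence $\psi\stackrel{(d)}{=}\phi$. The main obstacle I anticipate is the measurability/compactness verification needed to invoke the Strong Markov property for the random set $F_{y_0}$ — i.e.\ confirming that $\{F_{y_0}\subseteq U\}\in\sigma\big((\phi_y)_{y\in U}\big)$ and that $F_{y_0}$ is a.s.\ compact and connected — since everything else reduces to the vanishing of $u_{\phi,\mathcal{K}}$ on $\partial\mathcal{K}$ and the trivial sign symmetry of a centered Gaussian.
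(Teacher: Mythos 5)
Your core mechanism — apply the strong Markov property to $\mathcal{K}=F_{y_{0}}$, note that $\phi$ vanishes on $\partial F_{y_{0}}$ so $u_{\phi,\mathcal{K}}\equiv 0$, and then use the sign symmetry of a centred Gaussian to flip one side of the decomposition — is exactly the paper's argument in the case of finite $V$ (the paper flips the exterior field and composes with a global flip, you flip the interior; these are equivalent). The genuine gap is the compactness of $F_{y_{0}}$ when $V$ is infinite, which is precisely the case the lemma is needed for. Your justification is wrong on two counts. First, the inference ``each loop has finite range and only finitely many type-(i) loops sit above a vertex, hence clusters are bounded'' does not hold: a cluster is a chain of possibly infinitely many loops, and whether it is bounded is exactly the percolation question; on a general transient network (e.g.\ trees with sufficient branching) the sign clusters of the metric-graph free field, equivalently the loop clusters by Lemma \ref{LemClustPosComp}, are unbounded with positive probability, and the lemma is stated and needed in that generality. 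Second, even on the lattices where boundedness does hold, it is proved later in the paper (Proposition \ref{PropFiniteClusters}) using machinery that rests on Lemma \ref{LemSignGFF}, hence on the present lemma — so invoking it here would be circular. The paper explicitly flags that $F_{y_{0}}$ is ``not necessarily compact if $V$ is not finite'' and therefore does \emph{not} apply the strong Markov property to $F_{y_{0}}$ directly in that case.

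Your fallback, ``intersect with a large ball and pass to the limit,'' is not carried out and, as stated, does not work: intersecting $F_{y_{0}}$ with a ball creates new boundary points at which $\phi$ does not vanish, so $u_{\phi,\mathcal{K}}$ is no longer zero and the flip is no longer obviously law-preserving. The correct localization, which is what the paper does, is to replace $\phi$ by the field $\phi^{\widetilde{\mathcal{G}}\setminus(V_{n}\setminus V_{n-1})}$ killed on the shell of vertices $V_{n}\setminus V_{n-1}$; for that field the positive component $F_{y_{0},n}$ containing $y_{0}$ is automatically compact (the killed field vanishes on the separating shell), the finite-$V$ argument applies verbatim, and one then lets $n\to\infty$, using that the flipped killed fields converge in law to the flipped field and $\phi^{\widetilde{\mathcal{G}}\setminus(V_{n}\setminus V_{n-1})}$ converges in law to $\phi$. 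Without this (or some equivalent) limiting step, your proof only covers finite $V$.
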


\begin{proof}
By construction $F_{y_{0}}$ is closed and connected, but not necessarily compact if $V$ is not finite. $\phi$ is zero on $\partial F_{y_{0}}$. 

We first consider the case of $V$ being finite. Then $F_{y_{0}}$ is compact. According to the strong Markov property, conditional on
$F_{y_{0}}$ and $(\phi_{y})_{y\in F_{y_{0}}}$, $(\phi_{y})_{y\in\widetilde{\mathcal{G}}\setminus F_{y_{0}}}$ has the same law as $(\phi^{\widetilde{\mathcal{G}}\setminus F_{y_{0}}}
_{y})_{y\in\widetilde{\mathcal{G}}\setminus F_{y_{0}}}$. But
$\phi^{\widetilde{\mathcal{G}}\setminus F_{y_{0}}}$ and
$-\phi^{\widetilde{\mathcal{G}}\setminus F_{y_{0}}}$ have the same law. Thus $(1_{y\in F_{y_{0}}}\phi_{y}-1_{y\not\in F_{y_{0}}}\phi_{y})
_{y\in\widetilde{\mathcal{G}}}$ has the same law as $\phi$. Since $\phi$ and $-\phi$ have the same law, $(-1_{y\in F_{y_{0}}}\phi_{y}+1_{y\not\in F_{y_{0}}}\phi_{y})_{y\in\widetilde{\mathcal{G}}}$ has the same law as $\phi$ too.

If $V$ is infinite, let $x_{0}\in V$. Let $V_{n}$ be the set of vertices separated from $x_{0}$ by at most $n$ edges. $V_{n}$ is finite.
$V_{0}=\lbrace x_{0}\rbrace$ and $V_{1}$ is made of $x_{0}$ and all its neighbours. For $n\geq 1$ let $E_{n}$ be the set of edges either connecting two vertices in $V_{n-1}$ or a vertex in 
$V_{n}\setminus V_{n-1}$ to a vertex in $V_{n-1}$. 
$\mathcal{G}_{n}:=(V_{n},E_{n})$ is a connected sub-graph of $\mathcal{G}$. Let $\widetilde{\mathcal{G}}_{n}$ be the metric graph associated to the graph $\mathcal{G}_{n}$, viewed as a compact subset of $\widetilde{\mathcal{G}}$. For $n$ large enough such that $y_{0}\in\widetilde{\mathcal{G}}_{n}$, let 
$F_{y_{0},n}$ be the positive component of 
$(\vert\phi^{\widetilde{\mathcal{G}}\setminus(V_{n}\setminus V_{n-1})}_{y}\vert)_{y\in\widetilde{\mathcal{G}}}$ containing $y_{0}$, which is compact. As in the previous case, $(-1_{y\in F_{y_{0},n}}\phi^{\widetilde{\mathcal{G}}
\setminus(V_{n}\setminus V_{n-1})}_{y}+1_{y\not\in F_{y_{0},n}}\phi^{\widetilde{\mathcal{G}}\setminus(V_{n}\setminus V_{n-1})}_{y})_{y\in\widetilde{\mathcal{G}}}$ has the same law
as $\phi^{\widetilde{\mathcal{G}}\setminus(V_{n}\setminus V_{n-1})}$. 
As $n$ converges to $+\infty$, the first field converges in law to
$(-1_{y\in F_{y_{0}}}\phi_{y}+1_{y\not\in F_{y_{0}}}
\phi_{y})_{y\in\widetilde{\mathcal{G}}}$ and the second field converges in law to $\phi$, which proves the lemma.
\end{proof}

\begin{lemma}
\label{LemSignGFF}
Conditional on $(\vert\phi_{y}\vert)_{y\in\widetilde{\mathcal{G}}}$, the sign of $\phi$ on each of its connected components is distributed independently and uniformly in $\lbrace -1,+1\rbrace$.
\end{lemma}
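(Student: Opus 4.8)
The plan is to bootstrap from Lemma \ref{LemChangeSignComponent}, which already says that flipping the sign of $\phi$ on the closure of a single positive component $F_{y_0}$ leaves the law of $\phi$ invariant. The goal now is to upgrade this from flipping one component to the full statement that, conditionally on $(|\phi_y|)_{y\in\widetilde{\mathcal{G}}}$, the signs of the components are jointly independent and uniform in $\{-1,+1\}$.

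First I would fix some enumeration of the positive components. Since $\widetilde{\mathcal{G}}$ is a metric graph associated to a countable graph, the set of positive components of $(|\phi_y|)_{y\in\widetilde{\mathcal{G}}}$ is at most countable; pick a countable dense set $D\subseteq\widetilde{\mathcal{G}}$ (e.g. $V$ together with the rationals on each edge) and use $D$ to index/label the components, so that the family of components and their signs are measurable functions of $\phi$. Then the conditional law of the sign configuration given $(|\phi_y|)$ is a well-defined random probability measure on $\{-1,+1\}^{(\text{components})}$. I want to show this measure is a.s. the uniform product measure.

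The key step is to observe that Lemma \ref{LemChangeSignComponent} can be applied with $y_0$ replaced by any deterministic point, and hence — since the positive component containing a given deterministic point is a measurable function of $\phi$ — flipping the sign on that one (random) component preserves the law of $\phi$, while fixing $(|\phi_y|)$. More precisely, for any measurable functional $H$ of the pair $\big((|\phi_y|)_y, (\mathrm{sign}\,\phi_y)_y\big)$ and any deterministic $y_0$, applying the transformation of Lemma \ref{LemChangeSignComponent} gives $\mathbb{E}[H] = \mathbb{E}[H\circ \Theta_{y_0}]$ where $\Theta_{y_0}$ flips the sign on the component of $y_0$ and leaves $(|\phi_y|)$ untouched. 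Conditioning on $(|\phi_y|)_y$, this says the conditional law is invariant under flipping the sign of whichever component contains $y_0$; ranging over a countable dense set of $y_0$'s, it is invariant under flipping the sign of every individual component. A probability measure on $\{-1,+1\}^I$ ($I$ countable) that is invariant under every single-coordinate flip is necessarily the uniform product measure (each coordinate is symmetric Bernoulli, and invariance under flipping one coordinate while testing against arbitrary functions of the others forces independence coordinate by coordinate, then by a monotone-class/Kolmogorov-extension argument over all finite subsets of $I$). Applying this with $I$ = the (random, countable) index set of components conditionally on $(|\phi_y|)$ yields the lemma.

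The main obstacle is the bookkeeping needed to make "flip the sign on the component of $y_0$" a legitimate measurable operation that can be combined across a countable family: one must be careful that the event $\{y_0 \in F_{y_0}\}$ holds a.s. (true since $\phi_{y_0}\neq 0$ a.s.), that distinct deterministic points $y_0, y_0'$ may lie in the same component (so one is really flipping the same coordinate — harmless for the argument), and that every positive component is hit by at least one point of the dense set $D$ (true since components are open and nonempty). Once these measurability points are in place, the reduction to "single-coordinate-flip-invariance $\Rightarrow$ uniform product" is a soft and standard argument, so the proof should be short.
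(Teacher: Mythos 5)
Your proposal is correct and follows essentially the same route as the paper: both bootstrap from Lemma \ref{LemChangeSignComponent} applied at the points of a countable dense set, which hits every (open) positive component. The only difference is cosmetic bookkeeping at the end — the paper re-randomizes the signs of the components of $F_{y_0},\dots,F_{y_N}$ with external i.i.d.\ uniform signs and lets $N\to\infty$, while you phrase the same symmetry as invariance of the conditional law under each single-component flip and invoke the standard fact that a flip-invariant measure on $\lbrace -1,+1\rbrace^{I}$ is the uniform product measure.
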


\begin{proof}
Let $(y_{n})_{n\geq 0}$ be a dense sequence in $\widetilde{\mathcal{G}}$. Let $(\sigma_{n})_{n\geq 0}$ be an i.i.d. sequence of uniformly distributed variables in $\lbrace -1,+1\rbrace$ independent of $\phi$. According to lemma \ref{LemChangeSignComponent}, the field
\begin{displaymath}
\left(\prod_{n=0}^{N}(\sigma_{n}1_{y\in F_{y_{n}}}
+1_{y\not\in F_{y_{n}}})
\times\phi_{y}\right)_{y\in\widetilde{\mathcal{G}}}
\end{displaymath}
has the same law as $\phi$ whatever the value of $N$. Moreover as $N$ converges to $+\infty$, this field converges in law to the field obtained by choosing uniformly and independently a sign for each positive component of $(\vert\phi_{y}\vert)_{y\in\widetilde{\mathcal{G}}}$.
\end{proof}

Next we consider the discrete-space loops 
$\mathcal{L}_{1/2}$ and continuous loops 
$\widetilde{\mathcal{L}}_{1/2}$ coupled in the natural way though the restriction of the latter to $V$. We deal with the probability of a cluster of continuous loops occupying entirely an edge $e$ conditional on $\mathcal{L}_{1/2}$ and on the event that none of discrete-space loops occupies $e$. This event is the same as the occupation field $\widehat{\mathcal{L}}_{1/2}$ staying positive on $I_{e}$ and not having zeros there. Let $e=\lbrace x,y\rbrace$ be an edge joining vertices $x$ and $y$. In case $e$ is not occupied by a loop of 
$\mathcal{L}_{1/2}$, there are three kind of paths visiting $I_{e}$:
\begin{itemize}
\item The loops of entirely $\widetilde{\mathcal{L}}_{1/2}$ contained in $I_{e}$. These are independent $\mathcal{L}_{1/2}$ as they have no print on $V$. The occupation field of these loops is the square of a standard Brownian bridge of length $\rho(e)$ from $0$ at $x$ to $0$ at $y$ (\cite{Lupu20131dimLoops}, Proposition $4.6$).
\item The Poisson point process of excursions from $x$ to $x$ inside $I_{e}$ of the loops in $\widetilde{\mathcal{L}}_{1/2}$ visiting $x$. The intensity of excursions is
\begin{displaymath}
\widehat{\mathcal{L}}^{x}_{1/2}
\times 1_{\text{height excursion}~<\rho(e)} \eta_{+}.
\end{displaymath}
Conditional on $\widehat{\mathcal{L}}^{x}_{1/2}$, this Poisson
point process of excursions is independent from $\mathcal{L}_{1/2}$. 
Its occupation field is according to the
second Ray-Knight theorem the square of a Bessel-$0$ process with initial value $\widehat{\mathcal{L}}^{x}_{1/2}$ at $x$ conditioned to hit $0$ before time $\rho(e)$.
\item The Poisson point process of excursions from $y$ to $y$ inside $I_{e}$ of the loops in $\widetilde{\mathcal{L}}_{1/2}$ visiting $y$. The picture is the same as above.
\end{itemize}
We will denote by $(b^{(T)}_{t})_{0\leq t\leq T}$ a standard Brownian bridge from $0$ to $0$ of length $T$ and 
$(\beta^{(T,\ell)}_{t})_{t\geq 0}$ a square of a Bessel $0$ process starting from $\ell$ at $t=0$ and conditioned to hit $0$ before time $T$.
We have the following picture:

\begin{property}
\label{PropertyIndep}
Conditional on the discrete-space loops $\mathcal{L}_{1/2}$, the events of the family $\Big(\lbrace\widehat{\mathcal{L}}_{1/2}
~\text{has a zero on}~I_{e}\rbrace
\Big)_{e\in E\setminus \bigcup_{\mathcal{C}\in\mathfrak{C}
_{1/2}}\mathcal{C}}$ are independent. Let $e=\lbrace x,y\rbrace$  be an edge. The probability
\begin{displaymath}
\mathbb{P}\bigg(\widehat{\mathcal{L}}_{1/2}~\text{has a zero on}~
I_{e}\Big\vert \mathcal{L}_{1/2},
e\in E\setminus \bigcup_{\mathcal{C}\in
\mathfrak{C}_{1/2}}\mathcal{C}\bigg)
\end{displaymath}
is the same as for the sum of three independent processes
\begin{displaymath}
\Big(b^{(\rho(e)) 2}_{t}+
\beta^{(\rho(e),\widehat{\mathcal{L}}^{x}_{1/2})}_{t}+
\beta^{(\rho(e),\widehat{\mathcal{L}}^{y}_{1/2})}_{\rho(e)-t}
\Big)_{0\leq t\leq\rho(e)}
\end{displaymath}
having a zero on $(0,\rho(e))$.
\end{property}

\begin{lemma}
\label{LemProbInt}
Let $T,\ell_{1},\ell_{2}>0$. The probability that the sum of three independent processes
\begin{equation}
\label{SecAltDesc: EqThreeProc}
\Big(b^{(T) 2}_{t}+\beta^{(T,\ell_{1})}_{t}+
\beta^{(T,\ell_{2})}_{T-t}\Big)_{0\leq t\leq T}
\end{equation}
has a zero on $(0,T)$ is
\begin{equation}
\label{SecAltDesc: BigIntegral}
\dfrac{1}{\sqrt{\pi}}
\int_{0}^{+\infty}\exp\Big(-\dfrac{\ell_{1}\ell_{2}}{(2T)^{2}s}-s\Big)
\dfrac{ds}{\sqrt{s}}.
\end{equation}
\end{lemma}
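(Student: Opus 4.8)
The plan is to compute the probability in \eqref{SecAltDesc: BigIntegral} by first handling the two Bessel pieces and then convolving with the squared Brownian bridge. The key observation is that at a fixed time $t$, each of the three independent processes in \eqref{SecAltDesc: EqThreeProc} has a marginal distribution that is explicitly known, and the event ``sum has a zero on $(0,T)$'' can be attacked either pathwise (via a reflection/last-zero decomposition) or by an exact summation over the marginal laws. I would proceed via the pathwise route, which I expect to be cleaner: condition on the first hitting time of $0$ by the process $t\mapsto\beta^{(T,\ell_1)}_t$ (a squared Bessel-$0$ bridge-type process, which reaches $0$ a.s.\ before time $T$ by construction) and symmetrically on the last hitting time of $0$ by $t\mapsto\beta^{(T,\ell_2)}_{T-t}$, and use the fact that a squared Brownian bridge of length $T$ from $0$ to $0$ has zeros in every subinterval containing an endpoint neighbourhood. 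Actually the cleanest formulation: the sum has a zero on $(0,T)$ iff the squared Bessel-$0$ process started at $\ell_1$ hits $0$ before the squared Bessel-$0$ process started at $\ell_2$ (read backward from $T$) ``exits,'' i.e.\ iff their first-zero times don't overlap the whole interval — but the squared Brownian bridge term, which itself oscillates and hits $0$ near both endpoints, means the real question reduces to whether the two Bessel processes' excursion intervals cover $(0,T)$.

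\textbf{Reduction via the second Ray--Knight theorem.} The sharper approach, consistent with how Property~\ref{PropertyIndep} was derived, is to recognize the whole sum as an occupation field. By the second Ray--Knight theorem, $(\beta^{(T,\ell_1)}_t)_{0\le t}$ stopped at its first zero is the occupation field of a collection of Brownian excursions above $0$ emanating from level $0$ with local time $\ell_1$ accumulated at the origin-end, restricted to height $<T$; similarly $(\beta^{(T,\ell_2)}_{T-t})$ near $t=T$; and $(b^{(T)}_t)^2$ is the occupation field of the loop soup of parameter $\tfrac12$ on $(0,T)$ with Dirichlet boundary conditions. So the sum \eqref{SecAltDesc: EqThreeProc} is exactly (in law) the occupation field on the cable $I_e\cong(0,T)$ of all the Brownian paths living there, and ``has a zero on $(0,T)$'' is the event that this cable is \emph{not} entirely covered, i.e.\ that the corresponding metric-graph cluster does not bridge $x$ to $y$ through $I_e$. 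That probability can be computed directly: it equals $\exp(-$capacity-type term$)$ times a correction, and one recognizes \eqref{SecAltDesc: BigIntegral} after an explicit Laplace-type integral. I would identify the ``zero-crossing'' probability with
\begin{displaymath}
\mathbb{P}\big(M < T\big),\qquad M := \text{(first zero of the forward Bessel piece)} \vee \big(T - \text{(last zero of the backward Bessel piece)} \big),
\end{displaymath}
and compute it from the known laws of these hitting times.

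\textbf{The explicit computation.} The first hitting time of $0$ by a squared Bessel-$0$ process started at $\ell$ has density on $(0,\infty)$ proportional to $\ell t^{-2}\exp(-\ell/(2t))$ — this is the standard fact (a BESQ$^0$ started at $\ell$ reaches $0$ at a time distributed like $\ell/(2\mathbf{e})$ for $\mathbf{e}$ a standard exponential, i.e.\ an inverse-Gamma law). Conditioning $\beta^{(T,\ell_1)}$ to hit $0$ before $T$ just truncates and renormalizes this density. The forward and backward Bessel pieces hit $0$ independently, and the squared Brownian bridge contributes no additional obstruction in the interior once we condition on both Bessel pieces having vanished on overlapping or adjacent intervals — more precisely the sum is strictly positive on $(0,T)$ iff the first-zero time $\tau_1$ of the forward piece exceeds $T-\tau_2$ where $\tau_2$ is the first-zero time of the backward piece, AND the bridge stays positive in between, but a squared Brownian bridge conditioned to stay positive on a fixed subinterval has a computable cost. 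The cleanest path is: write the target probability as $\mathbb{E}[\,g(\widehat{\mathcal{L}}^x,\widehat{\mathcal{L}}^y)\,]$-free quantity $p(T,\ell_1,\ell_2)$, observe by scaling that $p(T,\ell_1,\ell_2)=p(1,\ell_1\ell_2/T^2,1)=:q(\ell_1\ell_2/T^2)$ depends only on the product combination $\ell_1\ell_2/(2T)^2$, then pin down $q$ either by computing one Laplace transform in this variable or by the boundary cases $\ell_1\ell_2\to 0$ (probability $\to 1$, matching $q(0)=\tfrac{1}{\sqrt\pi}\int_0^\infty e^{-s}s^{-1/2}ds=1$) and $\ell_1\ell_2\to\infty$ (probability $\to 0$). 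The scaling reduction is the conceptual heart; the remaining identification of $q$ with $\tfrac{1}{\sqrt\pi}\int_0^\infty\exp(-a/s-s)\,s^{-1/2}ds$ (a Bessel-$K_{1/2}$ / exponential-of-$-2\sqrt a$ type integral) follows by convolving the two inverse-Gamma hitting-time densities and integrating out.

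\textbf{Main obstacle.} The delicate point is correctly accounting for the squared Brownian bridge term: one must show that, on the event that the two Bessel excursion intervals together cover all of $(0,T)$, the bridge term cannot create a zero, and conversely on the complement one must integrate the probability that the bridge (which is positive somewhere in the uncovered middle region) nonetheless dips to zero — and this has to combine with the Bessel laws to give exactly \eqref{SecAltDesc: BigIntegral}. I expect the honest way around this is \emph{not} to separate the three terms at all, but to use the Ray--Knight identification globally: the sum is the occupation field of a single effective Markovian excursion picture on $(0,T)$, namely the square of a Bessel-$0$-type process started at $\ell_1$ at $0$ and at $\ell_2$ at $T$ (a ``bridge'' between two BESQ$^0$ boundary data, which is what the three independent pieces assemble into by the additivity of squared Bessel processes / Pitman--Yor), and the probability it has an interior zero is a classical first-passage computation for such a process. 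So the real work is (i) justifying the additive assembly of the three pieces into one BESQ$^0$-bridge with endpoint values $\ell_1,\ell_2$, and (ii) computing the no-zero probability for that bridge, which is where the integral representation \eqref{SecAltDesc: BigIntegral} comes out.
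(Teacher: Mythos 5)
Your plan ultimately rests on the step you yourself flag as the crux: assembling the three summands of \eqref{SecAltDesc: EqThreeProc} into a single squared Bessel bridge ``from $\ell_1$ to $\ell_2$'' by Pitman--Yor additivity, and then doing a classical no-zero computation for that bridge. That assembly is not valid, and it would give the wrong answer. Additivity of squared Bessel \emph{bridges} only holds in situations where no cross term between the endpoint data can appear, e.g.\ when at each endpoint at most one summand is nonzero; here both boundary values $\ell_1,\ell_2$ are positive and the identity fails quantitatively. Indeed the mean of the sum at time $t$ is $\frac{t(T-t)}{T}+\ell_1\frac{(T-t)^2}{T^2}+\ell_2\frac{t^2}{T^2}$, whereas the square of a Brownian bridge from $\sqrt{\ell_1}$ to $\sqrt{\ell_2}$, or the BESQ$^{1}$ bridge from $\ell_1$ to $\ell_2$ (note also the dimension count: $1+0+0=1$, not $0$; a genuine BESQ$^{0}$ bridge between two positive endpoints never vanishes at all), carries an extra cross term proportional to $\sqrt{\ell_1\ell_2}\,t(T-t)/T^{2}$. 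Consistently, the zero-probabilities of those candidate processes are $e^{-2\sqrt{\ell_1\ell_2}/T}$ and $2/(e^{2\sqrt{\ell_1\ell_2}/T}+1)$ respectively, neither of which equals \eqref{SecAltDesc: BigIntegral}, which by Lemma \ref{LemIntegral} is $e^{-\sqrt{\ell_1\ell_2}/T}$. The fallback arguments you sketch do not close the gap either: the event is not ``the two Bessel zero-intervals cover $(0,T)$'' (the bridge must also vanish in the overlap, as you note); Brownian scaling only reduces the probability to a function of the two variables $\ell_1/T,\ell_2/T$, not of the single product $\ell_1\ell_2/T^{2}$; and the limits $\ell_1\ell_2\to 0,\infty$ of course do not determine that function.

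For contrast, the paper deliberately breaks the symmetry and combines only \emph{two} of the pieces: $\big(b^{(T)2}_t+\beta^{(T,\ell_2)}_{T-t}\big)_{0\le t\le T}$ is the square of a standard Brownian bridge from $0$ to $\sqrt{\ell_2}$ (a legitimate additive identity, since these two summands never share a nonzero endpoint), while $\beta^{(T,\ell_1)}$ is kept separate. Because the zero set of $\beta^{(T,\ell_1)}$ is $[\sigma_1,T]$ with $\sigma_1$ its first zero, the sum vanishes somewhere in $(0,T)$ exactly when $\sigma_1$ does not exceed the last zero $g$ of that bridge. The law of $\sigma_1$ is your (correct) inverse-gamma hitting law truncated at $T$, the law of $g$ follows from the joint law of $(g_T,B_T)$ conditioned on $B_T=\sqrt{\ell_2}$, and an explicit double integral with the substitution $s=\frac{\ell_2}{2T}\frac{t_2}{T-t_2}$ produces \eqref{SecAltDesc: BigIntegral}. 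If you want to keep your ``single effective process'' viewpoint, you must first identify the actual conditioned law of the sum (the occupation field on the cable given no crossing), which is not a squared Bessel bridge, and that identification essentially forces a two-hitting-time computation of the paper's type.
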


\begin{proof}
We will break the symmetry of the expression \eqref{SecAltDesc: EqThreeProc} and use the fact that the process
$\Big(b^{(T) 2}_{t}+
\beta^{(T,\ell_{2})}_{T-t}\Big)_{0\leq t\leq T}$ has the same law as the square of a standard Brownian bridge of length $T$ from $0$ to 
$\sqrt{\ell_{2}}$; 
see \cite{RevuzYor1999BMGrundlehren}, Chapter XI, Section $3$.
For the process \eqref{SecAltDesc: EqThreeProc} to have a zero on
$(0,T)$, the process $\beta^{(T,\ell_{1})}$ has to hit $0$ before the last zero of $\Big(b^{(T) 2}_{t}+
\beta^{(T,\ell_{2})}_{T-t}\Big)_{0\leq t\leq T}$.

According to Ray-Knight theorem, the time when the square Bessel $0$ started from $\ell_{1}$ hits $0$ has the same law as the maximum of a standard Brownian motion started from $0$ and stopped at its local time at $0$ reaching the level $\ell_{1}$. The distribution of this maximum is
\begin{displaymath}
1_{a>0}\dfrac{\ell_{1}}{2a^{2}}\exp\left(-\dfrac{\ell_{1}}{2a}\right)da.
\end{displaymath}
In $\beta^{(T,\ell_{1})}$ we condition on hitting zero before time $T$. So the distribution of the first zero is
\begin{equation}
\label{SecAltDesc: Distribt1}
1_{0<t_{2}<T}\dfrac{\ell_{1}}{2t_{1}^{2}}
\exp\left(\dfrac{\ell_{1}}{2T}-\dfrac{\ell_{1}}{2t_{1}}\right)dt_{1}.
\end{equation}

Let $(B_{t})_{t\geq 0}$ be a standard Brownian motion on $\mathbb{R}$ started from $0$ and
\begin{displaymath}
g_{T}:=\sup\lbrace t\in[0,T]\vert B_{t}=0\rbrace.
\end{displaymath}
The joint distribution of $(g_{T},B_{T})$ is (see \cite{RevuzYor1999BMGrundlehren}, Chapter XII, Section $3$)
\begin{displaymath}
1_{0<a<T}\dfrac{\vert x\vert\exp\left(-\dfrac{x^{2}}{2(T-a)}\right)}
{2\pi\sqrt{a(T-a)^{3}}} da dx.
\end{displaymath}
If we condition by $B_{T}=\sqrt{\ell_{2}}$ we get the distribution of the last zero of $\Big(b^{(T) 2}_{t}+
\beta^{(T,\ell_{2})}_{T-t}\Big)_{0\leq t\leq T}$ which is
\begin{equation}
\label{SecAltDesc: Distribt2}
1_{0<t_{1}<T}\dfrac{\sqrt{\ell_{2}T}
\exp\left(\dfrac{\ell_{2}}{2T}-\dfrac{\ell_{2}}{2(T-t_{2})}\right)}
{\sqrt{2\pi t_{2}(T-t_{2})^{3}}} dt_{2}.
\end{equation}

Gathering \eqref{SecAltDesc: Distribt1} and \eqref{SecAltDesc: Distribt2} we get that the probability that we are interested in is
\begin{multline*}
\sqrt{\dfrac{\ell_{2}T}{2\pi}}\exp\left(\dfrac{\ell_{1}+\ell_{2}}{2T}\right)
\int_{0<t_{1}<t_{2}<T}\dfrac{\ell_{1}}{2t_{1}^{2}}
\exp\left(-\dfrac{\ell_{1}}{2t_{1}}\right)
\dfrac{\exp\left(-\dfrac{\ell_{2}}{2(T-t_{2})}\right)}
{\sqrt{t_{2}(T-t_{2})^{3}}} dt_{1} dt_{2}\\
=\sqrt{\dfrac{\ell_{2}T}{2\pi}}\exp\left(\dfrac{\ell_{1}+\ell_{2}}{2T}\right)\int_{0<t_{2}<T}
\exp\left(-\dfrac{\ell_{1}}{2t_{2}}-\dfrac{\ell_{2}}{2(T-t_{2})}\right)
\dfrac{dt_{2}}{\sqrt{t_{2}(T-t_{2})^{3}}}.
\end{multline*}
By performing the change of variables
\begin{displaymath}
s:=\dfrac{\ell_{2}}{2T}\dfrac{t_{2}}{T-t_{2}}
\end{displaymath}
we get the integral \eqref{SecAltDesc: BigIntegral}.
\end{proof}

\begin{lemma}
\label{LemIntegral}
For all $\lambda\geq 0$
\begin{displaymath}
\int_{0}^{+\infty}\exp\Big(-\dfrac{\lambda}{s}-s\Big)
\dfrac{ds}{\sqrt{s}}=\sqrt{\pi}e^{-2\sqrt{\lambda}}.
\end{displaymath}
\end{lemma}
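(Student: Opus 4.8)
The plan is to evaluate $F(\lambda):=\int_{0}^{+\infty}\exp\!\big(-\tfrac{\lambda}{s}-s\big)\,\tfrac{ds}{\sqrt{s}}$ by a symmetrization trick followed by a single substitution. The case $\lambda=0$ is immediate, since then $F(0)=\Gamma(\tfrac12)=\sqrt{\pi}=\sqrt{\pi}\,e^{-2\sqrt{0}}$, so from now on assume $\lambda>0$. The first step is to record a second expression for $F(\lambda)$: performing the change of variable $s\mapsto\lambda/s$ one checks directly that
\[
F(\lambda)=\sqrt{\lambda}\int_{0}^{+\infty}\exp\!\Big(-\tfrac{\lambda}{s}-s\Big)\,s^{-3/2}\,ds .
\]

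Adding the two representations of $F(\lambda)$ gives
\[
2F(\lambda)=\int_{0}^{+\infty}\exp\!\Big(-\tfrac{\lambda}{s}-s\Big)\big(s^{-1/2}+\sqrt{\lambda}\,s^{-3/2}\big)\,ds .
\]
The key observation is that the weight $s^{-1/2}+\sqrt{\lambda}\,s^{-3/2}$ is twice the derivative of $t(s):=\sqrt{s}-\sqrt{\lambda/s}$, while $s+\lambda/s=t(s)^{2}+2\sqrt{\lambda}$. Since $t$ is a strictly increasing bijection from $(0,+\infty)$ onto $(-\infty,+\infty)$ (with $t\to-\infty$ as $s\to 0^{+}$ and $t\to+\infty$ as $s\to+\infty$), substituting $t=t(s)$ turns the last display into
\[
2F(\lambda)=2\,e^{-2\sqrt{\lambda}}\int_{-\infty}^{+\infty}e^{-t^{2}}\,dt=2\sqrt{\pi}\,e^{-2\sqrt{\lambda}},
\]
which is exactly the claimed identity.

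I do not expect a genuine obstacle in this argument; the only points needing (routine) verification are the monotonicity and the limiting values of $t(s)$, so that the change of variable is a bijection, and the absolute convergence of the integrals involved, so that the symmetrization step is legitimate — both follow at once from the $e^{-s}$ decay at $+\infty$ and the $e^{-\lambda/s}$ decay at $0$. An alternative, slightly longer, route would be to reduce to the classical integral $\int_{0}^{+\infty}e^{-w^{2}-\lambda/w^{2}}\,dw$ via $s=w^{2}$, or to note that $F$ satisfies a second-order linear ODE in the variable $\sqrt{\lambda}$; the direct substitution above is the most economical. (This lemma is what is needed to evaluate the integral in Lemma \ref{LemProbInt} in closed form, with $\lambda=\ell_{1}\ell_{2}/(2T)^{2}$.)
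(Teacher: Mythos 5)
Your argument is correct, but it follows a different route from the paper. You symmetrize the integral via the involution $s\mapsto\lambda/s$, which yields the second representation $F(\lambda)=\sqrt{\lambda}\int_{0}^{+\infty}e^{-\lambda/s-s}s^{-3/2}ds$, and then use the substitution $t=\sqrt{s}-\sqrt{\lambda/s}$, exploiting $s+\lambda/s=t^{2}+2\sqrt{\lambda}$ and $2\,dt=(s^{-1/2}+\sqrt{\lambda}\,s^{-3/2})ds$ — this is the classical Cauchy--Schl\"omilch/Glasser device, and all the auxiliary checks (bijectivity of $t$, absolute convergence) are indeed routine. The paper instead takes the ODE route you mention only in passing: it sets $f(\lambda)=F(\lambda)$, notes $f(0)=\Gamma(\tfrac12)=\sqrt{\pi}$, differentiates under the integral sign to get $f'(\lambda)=-\int_{0}^{+\infty}e^{-\lambda/s-s}s^{-3/2}ds$, and then applies the same change of variables $z=\lambda/s$ to identify $f'(\lambda)=-f(\lambda)/\sqrt{\lambda}$, solving the ODE with the initial condition. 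Both proofs hinge on the invariance of the integrand under $s\mapsto\lambda/s$; yours reaches the closed form in one stroke and avoids justifying differentiation under the integral sign and the behaviour of the ODE at $\lambda=0$, while the paper's version requires those (standard) justifications but needs no clever substitution beyond the inversion itself. Either argument fully suffices for the application in Lemma \ref{LemProbInt} with $\lambda=\ell_{1}\ell_{2}/(2T)^{2}$.
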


\begin{proof}
Let
\begin{displaymath}
f(\lambda):=\int_{0}^{+\infty}\exp\Big(-\dfrac{\lambda}{s}-s\Big)
\dfrac{ds}{\sqrt{s}}.
\end{displaymath}
Then $f(0)=\Gamma(\frac{1}{2})=\sqrt{\pi}$ and
\begin{displaymath}
f'(\lambda)=-\int_{0}^{+\infty}\exp\Big(-\dfrac{\lambda}{s}-s\Big)
\dfrac{ds}{\sqrt{s^{3}}}.
\end{displaymath}
By doing the change of variables $z=\frac{\lambda}{s}$ we get
\begin{displaymath}
f'(\lambda)=-\dfrac{1}{\sqrt{\lambda}}\int_{0}^{+\infty}
\exp\Big(-z-\dfrac{\lambda}{z}\Big)\dfrac{dz}{\sqrt{z}}.
\end{displaymath}
$f$ satisfies the ODE
\begin{displaymath}
f'(\lambda)=-\dfrac{1}{\sqrt{\lambda}}f(\lambda)
\end{displaymath}
with initial condition $f(0)=\sqrt{\pi}$, thus 
$f(\lambda)=\sqrt{\pi}e^{-2\sqrt{\lambda}}$.
\end{proof}

\begin{corollary}
\label{CorProbAbsEdge}
Conditional on the discrete-space loops $\mathcal{L}_{1/2}$, the events of the family $\Big(\lbrace\widehat{\mathcal{L}}_{1/2}
~\text{has a zero on}~I_{e}\rbrace
\Big)_{e\in E\setminus \bigcup_{\mathcal{C}\in
\mathfrak{C}_{1/2}}\mathcal{C}}$ are independent and the corresponding probabilities are given by
\begin{multline*}
\mathbb{P}\bigg(\widehat{\mathcal{L}}_{1/2}~\text{has a zero on}~
I_{\lbrace x,y\rbrace}\Big\vert \mathcal{L}_{1/2},
\lbrace x,y\rbrace\in E\setminus \bigcup_{\mathcal{C}\in\mathfrak{C}
_{1/2}}\mathcal{C}\bigg)
\\=
\exp\Big(-\dfrac{1}{\rho(\lbrace x,y\rbrace)}
\sqrt{\widehat{\mathcal{L}}^{x}_{1/2}
\widehat{\mathcal{L}}^{y}_{1/2}}\Big)
=\exp\Big(-2C(x,y)\sqrt{\widehat{\mathcal{L}}^{x}_{1/2}
\widehat{\mathcal{L}}^{y}_{1/2}}\Big).
\end{multline*}
\end{corollary}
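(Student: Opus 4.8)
The plan is to obtain Corollary~\ref{CorProbAbsEdge} by simply assembling the three results that immediately precede it, so that no genuinely new argument is required. The independence assertion is already contained verbatim in Property~\ref{PropertyIndep}: conditionally on $\mathcal{L}_{\frac{1}{2}}$, the events $\bigl(\{\widehat{\mathcal{L}}_{\frac{1}{2}}\ \text{has a zero on}\ I_{e}\}\bigr)_{e\in E\setminus\bigcup_{\mathcal{C}}\mathcal{C}}$ are independent, so there is nothing left to prove for that part of the statement. It therefore remains only to evaluate each individual conditional probability.

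Fix an edge $e=\{x,y\}$ not covered by any loop of $\mathcal{L}_{\frac{1}{2}}$. By Property~\ref{PropertyIndep}, the conditional probability that $\widehat{\mathcal{L}}_{\frac{1}{2}}$ has a zero on $I_{e}$ equals the probability that the sum of three independent processes $\bigl(b^{(\rho(e))\,2}_{t}+\beta^{(\rho(e),\widehat{\mathcal{L}}^{x}_{\frac{1}{2}})}_{t}+\beta^{(\rho(e),\widehat{\mathcal{L}}^{y}_{\frac{1}{2}})}_{\rho(e)-t}\bigr)_{0\le t\le\rho(e)}$ has a zero on $(0,\rho(e))$. I would then apply Lemma~\ref{LemProbInt} with $T=\rho(e)$, $\ell_{1}=\widehat{\mathcal{L}}^{x}_{\frac{1}{2}}$, $\ell_{2}=\widehat{\mathcal{L}}^{y}_{\frac{1}{2}}$, which rewrites this probability as $\tfrac{1}{\sqrt{\pi}}\int_{0}^{+\infty}\exp\bigl(-\tfrac{\widehat{\mathcal{L}}^{x}_{\frac{1}{2}}\widehat{\mathcal{L}}^{y}_{\frac{1}{2}}}{(2\rho(e))^{2}s}-s\bigr)\tfrac{ds}{\sqrt{s}}$. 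Finally, invoking Lemma~\ref{LemIntegral} with $\lambda=\tfrac{\widehat{\mathcal{L}}^{x}_{\frac{1}{2}}\widehat{\mathcal{L}}^{y}_{\frac{1}{2}}}{(2\rho(e))^{2}}$ collapses the integral to $\tfrac{1}{\sqrt{\pi}}\cdot\sqrt{\pi}\,e^{-2\sqrt{\lambda}}=\exp\bigl(-\tfrac{1}{\rho(e)}\sqrt{\widehat{\mathcal{L}}^{x}_{\frac{1}{2}}\widehat{\mathcal{L}}^{y}_{\frac{1}{2}}}\bigr)$, and substituting $\rho(e)=\tfrac{1}{2C(x,y)}$ gives the second displayed form $\exp\bigl(-2C(x,y)\sqrt{\widehat{\mathcal{L}}^{x}_{\frac{1}{2}}\widehat{\mathcal{L}}^{y}_{\frac{1}{2}}}\bigr)$.

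There is no real obstacle here, since all the analytic work has been done in Property~\ref{PropertyIndep} and Lemmas~\ref{LemProbInt}--\ref{LemIntegral}; the only thing needing a little care is the bookkeeping of constants, namely checking that the $\tfrac{1}{\sqrt{\pi}}$ prefactor coming out of Lemma~\ref{LemProbInt} is exactly cancelled by the $\sqrt{\pi}$ produced by Lemma~\ref{LemIntegral}, and that $2\sqrt{\lambda}=\sqrt{\ell_{1}\ell_{2}}/T$ (not off by a factor $2$) so that the exponent really simplifies to $\sqrt{\widehat{\mathcal{L}}^{x}_{\frac{1}{2}}\widehat{\mathcal{L}}^{y}_{\frac{1}{2}}}/\rho(e)$. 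One should also note that the formula remains valid in the degenerate cases where $\widehat{\mathcal{L}}^{x}_{\frac{1}{2}}=0$ or $\widehat{\mathcal{L}}^{y}_{\frac{1}{2}}=0$: then $\lambda=0$, the probability is $1$, and this is consistent since the occupation field vanishes at an unvisited endpoint so that $\widehat{\mathcal{L}}_{\frac{1}{2}}$ almost surely has a zero arbitrarily close to that endpoint inside $I_{e}$.
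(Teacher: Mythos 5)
Your proposal is correct and is exactly the argument the paper intends: the corollary is stated as an immediate consequence of Property \ref{PropertyIndep} combined with Lemmas \ref{LemProbInt} and \ref{LemIntegral}, with $T=\rho(e)$, $\ell_{1}=\widehat{\mathcal{L}}^{x}_{\frac{1}{2}}$, $\ell_{2}=\widehat{\mathcal{L}}^{y}_{\frac{1}{2}}$, and your constant bookkeeping ($2\sqrt{\lambda}=\sqrt{\ell_{1}\ell_{2}}/T$, $\rho(e)=\tfrac{1}{2C(e)}$) is right.
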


From Lemma \ref{LemSignGFF} and Corollary \ref{CorProbAbsEdge} follows the alternative description of the coupling between
$\mathcal{L}_{1/2}$ and $(\phi_{x})_{x\in V}$ given by Theorem \ref{ThmAltDescCoupl} bis; see Figure \ref{Fig one}.

Observe that a posteriori the quantity $1-\exp\Big(-2C(x,y)\sqrt{\widehat{\mathcal{L}}^{x}_{1/2}
\widehat{\mathcal{L}}^{y}_{1/2}}\Big)$ equals 
\begin{equation}
\label{EqNiceExpr}
1-e^{-C(x,y)\vert\phi_{x}\phi_{y}\vert}.
\end{equation}

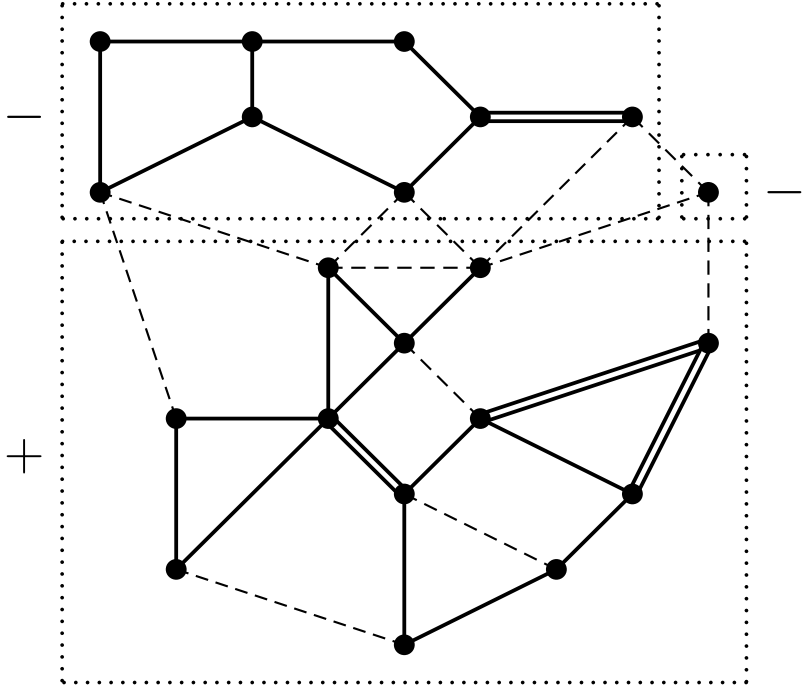
\begin{figure}
\begin{center}
\begin{pspicture}(0,0.5)(10,9.5)
\psline[linewidth=0.05](5,5)(6,6)
\psline[linewidth=0.05](5,5)(4,6)
\psline[linewidth=0.05](5,5)(4,4)
\psline[linestyle=dashed](5,5)(6,4)
\psline[linestyle=dashed](4,6)(6,6)
\psline[linewidth=0.05](4,6)(4,4)
\psline[linewidth=0.05](2,4)(4,4)
\psline[linestyle=dashed](2,4)(1,7)
\psline[linestyle=dashed](4,6)(1,7)
\psline[linestyle=dashed](4,6)(5,7)
\psline[linestyle=dashed](6,6)(5,7)
\psline[linewidth=0.05](3,8)(5,7)
\psline[linewidth=0.05](3,8)(1,7)
\psline[linewidth=0.05](3,9)(3,8)
\psline[linewidth=0.05](3,9)(1,9)
\psline[linewidth=0.05](1,7)(1,9)
\psline[linewidth=0.05](3,9)(5,9)
\psline[linewidth=0.05](6,8)(5,9)
\psline[linewidth=0.05](6,8)(5,7)
\psline[linewidth=0.05, doubleline=true](6,8)(8,8)
\psline[linestyle=dashed](6,6)(8,8)
\psline[linestyle=dashed](6,6)(9,7)
\psline[linestyle=dashed](8,8)(9,7)
\psline[linestyle=dashed](9,5)(9,7)
\psline[linewidth=0.05, doubleline=true](9,5)(6,4)
\psline[linewidth=0.05, doubleline=true](9,5)(8,3)
\psline[linewidth=0.05](8,3)(6,4)
\psline[linewidth=0.05](8,3)(7,2)
\psline[linestyle=dashed](5,3)(7,2)
\psline[linewidth=0.05, doubleline=true](5,3)(4,4)
\psline[linewidth=0.05](5,3)(6,4)
\psline[linewidth=0.05](5,3)(5,1)
\psline[linewidth=0.05](7,2)(5,1)
\psline[linestyle=dashed](2,2)(5,1)
\psline[linewidth=0.05](2,2)(2,4)
\psline[linewidth=0.05](2,2)(4,4)
\psline[linewidth=0.05,linestyle=dotted](5.5,6.65)(0.5,6.65)
\psline[linewidth=0.05,linestyle=dotted](0.5,9.5)(0.5,6.65)
\psline[linewidth=0.05,linestyle=dotted](8.35,9.5)(0.5,9.5)
\psline[linewidth=0.05,linestyle=dotted](8.35,9.5)(8.35,6.65)
\psline[linewidth=0.05,linestyle=dotted](5.5,6.65)(8.35,6.65)
\psline[linewidth=0.05,linestyle=dotted](9.5,6.35)(0.5,6.35)
\psline[linewidth=0.05,linestyle=dotted](9.5,6.35)(9.5,0.5)
\psline[linewidth=0.05,linestyle=dotted](0.5,0.5)(9.5,0.5)
\psline[linewidth=0.05,linestyle=dotted](0.5,0.5)(0.5,6.35)
\psline[linewidth=0.05,linestyle=dotted](8.65,6.65)(9.5,6.65)
\psline[linewidth=0.05,linestyle=dotted](9.5,7.5)(9.5,6.65)
\psline[linewidth=0.05,linestyle=dotted](9.5,7.5)(8.65,7.5)
\psline[linewidth=0.05,linestyle=dotted](8.65,7.5)(8.65,6.65)
\psdot[dotsize=3pt 6](5,5)
\psdot[dotsize=3pt 6](6,6)
\psdot[dotsize=3pt 6](4,6)
\psdot[dotsize=3pt 6](4,4)
\psdot[dotsize=3pt 6](6,4)
\psdot[dotsize=3pt 6](2,4)
\psdot[dotsize=3pt 6](1,7)
\psdot[dotsize=3pt 6](3,8)
\psdot[dotsize=3pt 6](5,7)
\psdot[dotsize=3pt 6](6,8)
\psdot[dotsize=3pt 6](5,9)
\psdot[dotsize=3pt 6](3,9)
\psdot[dotsize=3pt 6](1,9)
\psdot[dotsize=3pt 6](8,8)
\psdot[dotsize=3pt 6](9,7)
\psdot[dotsize=3pt 6](9,5)
\psdot[dotsize=3pt 6](8,3)
\psdot[dotsize=3pt 6](7,2)
\psdot[dotsize=3pt 6](5,3)
\psdot[dotsize=3pt 6](5,1)
\psdot[dotsize=3pt 6](2,2)
\rput(0,3.5){\begin{Huge}$+$\end{Huge}}
\rput(0,8){\begin{Huge}$-$\end{Huge}}
\rput(10,7){\begin{Huge}$-$\end{Huge}}
\end{pspicture}
\caption{Full lines are the edges visited by discrete loops. 
Double lines are additionaly opened edges.
Dashed lines are edges left closed. 
Dotted contours surround clusters in $\mathfrak{C}'$.}
\label{Fig one}
\end{center}
\end{figure}

\section{Alternative proof of the coupling}
\label{SecAltProof}

In this section we prove directly, without using metric graphs, that the procedure described in Theorem \ref{ThmAltDescCoupl} bis provides a coupling between $\mathcal{L}_{1/2}$ and the Gaussian free field. We will denote by $\phi$ the field constructed by this procedure and $\psi$ a generic Gaussian free field on $\mathcal{G}$, so as to avoid confusion. 

Let $e_{1}=\lbrace x_{1},y_{1}\rbrace,\dots,
e_{n}=\lbrace x_{n},y_{n}\rbrace$ be $n$ different edges of $\mathcal{G}$. Let $\mathcal{G}^{(e_{1},\dots,e_{n})}$ be the graph obtained by removing the edges $e_{1},\dots,e_{n}$. $\mathcal{G}^{(e_{1},\dots,e_{n})}$ may not be connected. Let 
$\kappa^{(e_{1},\dots,e_{n})}$ be the killing measure on $V$ defined as
\begin{displaymath}
\kappa^{(e_{1},\dots,e_{n})}(x):=\kappa(x)+
\sum_{i=1}^{n}C(e_{i})(1_{x=x_{i}}+1_{x=y_{i}}).
\end{displaymath}
Let $(G^{(e_{1},\dots,e_{n})}(x,y))_{x,y\in V}$ be the Green's function of the Markov jump process on $\mathcal{G}^{(e_{1},\dots,e_{n})}$ with jump rates equal to conductances and killing rates given by $\kappa^{(e_{1},\dots,e_{n})}$. Let 
$(\psi^{(e_{1},\dots,e_{n})}_{x})_{x\in V}$ be the corresponding Gaussian free field on $\mathcal{G}^{(e_{1},\dots,e_{n})}$. Let $H$ be the energy functional
\begin{displaymath}
H(f):=\dfrac{1}{2}\bigg(\sum_{x\in V}\kappa(x)f_{x}^{2}+
\sum_{x,y\in V, \lbrace x,y\rbrace\in E}C(x,y)(f_{x}-f_{y})^{2}\bigg)
\end{displaymath}
and let
\begin{displaymath}
H^{(e_{1},\dots,e_{n})}(f):=H(f)+
\sum_{i=1}^{n}C(e_{i})f_{x_{i}}f_{y_{i}}.
\end{displaymath}
If $V$ is finite the distribution of $\psi$ is
\begin{displaymath}
\dfrac{1}{(2\pi)^{\frac{\vert V\vert}{2}}\det(G)^{\frac{1}{2}}}
e^{-H(f)}\prod_{x\in V}df_{x}
\end{displaymath}
and the distribution of $\psi^{(e_{1},\dots,e_{n})}$ is
\begin{displaymath}
\dfrac{1}{(2\pi)^{\frac{\vert V\vert}{2}}
\det(G^{(e_{1},\dots,e_{n})})^{\frac{1}{2}}}
e^{-H^{(e_{1},\dots,e_{n})}(f)}\prod_{x\in V}df_{x}.
\end{displaymath}
Conditional on $e_{i}\not\in\bigcup_{\mathcal{C}
\in\mathfrak{C}_{1/2}}\mathcal{C}$
for every $i\in\lbrace 1,\dots,n\rbrace$, $(\widehat{\mathcal{L}}_{1/2}^{x})_{x\in V}$ has the same law as
$\frac{1}{2}\psi^{(e_{1},\dots,e_{n}) 2}$. If $V$ is finite then
\begin{equation}
\label{SecAltProof: EqQuotDet}
\mathbb{P}\Big(\forall i\in\lbrace 1,\dots,n\rbrace,
e_{i}\not\in\bigcup_{\mathcal{C}\in\mathfrak{C}
_{1/2}}\mathcal{C}\Big)=
\dfrac{\det(G^{(e_{1},\dots,e_{n})})^{\frac{1}{2}}}
{\det(G)^{\frac{1}{2}}};
\end{equation}
see \cite{LeJanLemaire2012LoopClusters}.

\begin{lemma}
\label{LemGFFAbsEdges}
Assume that $V$ is finite. Let $e_{1}=\lbrace x_{1},y_{1}\rbrace,\dots,
e_{n}=\lbrace x_{n},y_{n}\rbrace$ be $n$ different edges of $\mathcal{G}$. For any bounded functional $F$ on the fields 
\begin{equation}
\label{SecAltProof: EqE1}
\mathbb{E}\left[F(\widehat{\mathcal{L}}_{1/2});
\forall i\in\lbrace 1,\dots,n\rbrace,
e_{i}\not\in\bigcup_{\mathcal{C}'\in\mathfrak{C}'}\mathcal{C}'\right]=
\mathbb{E}\left[\prod_{i=1}^{n}e^{-C(e_{i})(\vert \psi_{x_{i}}\psi_{y_{i}}\vert+\psi_{x_{i}}\psi_{y_{i}})}F\Big(
\dfrac{1}{2}\psi^{2}\Big)\right],
\end{equation}
\begin{multline}
\label{SecAltProof: EqE2}
\mathbb{E}\left[F(\widehat{\mathcal{L}}_{1/2});
E\setminus\bigcup_{\mathcal{C}'\in\mathfrak{C}'}\mathcal{C}'=
\lbrace e_{1},\dots,e_{n}\rbrace\right]=\\
\mathbb{E}\Bigg[\prod_{i=1}^{n}e^{-C(e_{i})(\vert \psi_{x_{i}}\psi_{y_{i}}\vert+\psi_{x_{i}}\psi_{y_{i}})}
\prod_{
\begin{scriptsize}
\begin{array}{c}
\lbrace x,y\rbrace\in \\ 
E\setminus\lbrace e_{1},\dots,e_{n}\rbrace
\end{array} 
\end{scriptsize}
}
1_{\psi_{x}\psi_{y}>0}
(1-e^{-2C(x,y)\vert \psi_{x}\psi_{y}\vert})
F\Big(\dfrac{1}{2}\psi^{2}\Big)\Bigg].
\end{multline}
\end{lemma}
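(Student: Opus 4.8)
The plan is to prove \eqref{SecAltProof: EqE1} first and then derive \eqref{SecAltProof: EqE2} from it by inclusion–exclusion over the remaining edges. For \eqref{SecAltProof: EqE1}, I would combine three facts already assembled in the excerpt. First, conditional on the event $A_{n}:=\{\forall i,\ e_{i}\notin\bigcup_{\mathcal{C}\in\mathfrak{C}_{\frac12}}\mathcal{C}\}$, the occupation field $(\widehat{\mathcal{L}}_{\frac12}^{x})_{x\in V}$ has the law of $\frac12(\psi^{(e_{1},\dots,e_{n})})^{2}$; second, $\mathbb{P}(A_{n})=\det(G^{(e_{1},\dots,e_{n})})^{1/2}/\det(G)^{1/2}$ by \eqref{SecAltProof: EqQuotDet}; third, on $A_{n}$ each of the $e_{i}$ is independently left closed in the procedure defining $\mathfrak{C}'$ with probability $\exp(-2C(e_{i})\sqrt{\widehat{\mathcal{L}}^{x_{i}}_{\frac12}\widehat{\mathcal{L}}^{y_{i}}_{\frac12}})$ (Corollary \ref{CorProbAbsEdge}, in the discrete guise of Theorem \ref{ThmAltDescCoupl} bis — the edge is left closed exactly when $\widehat{\mathcal{L}}_{\frac12}$ has a zero on $I_{e_{i}}$). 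Hence the left-hand side of \eqref{SecAltProof: EqE1} equals
\[
\mathbb{P}(A_{n})\,\mathbb{E}\!\left[\prod_{i=1}^{n}e^{-2C(e_{i})\sqrt{\widehat{\mathcal{L}}^{x_{i}}_{\frac12}\widehat{\mathcal{L}}^{y_{i}}_{\frac12}}}\,F(\widehat{\mathcal{L}}_{\frac12})\,\Big|\,A_{n}\right]
=\frac{\det(G^{(e_{1},\dots,e_{n})})^{1/2}}{\det(G)^{1/2}}\,\mathbb{E}\!\left[\prod_{i=1}^{n}e^{-C(e_{i})|\psi^{(e)}_{x_{i}}\psi^{(e)}_{y_{i}}|}\,F\Big(\tfrac12(\psi^{(e)})^{2}\Big)\right],
\]
writing $\psi^{(e)}$ for $\psi^{(e_{1},\dots,e_{n})}$ and using $2\sqrt{\frac12 a^{2}\cdot\frac12 b^{2}}=|ab|$.

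The remaining step is to rewrite this $\psi^{(e)}$-expectation as a $\psi$-expectation with the extra weight $\prod_i e^{-C(e_{i})\psi_{x_{i}}\psi_{y_{i}}}$ absorbing the change of reference field. Since $V$ is finite I would do this at the level of densities: by the explicit Gaussian densities quoted just before the lemma, for any bounded symmetric functional $\Phi$ (here $\Phi(f)=\prod_i e^{-C(e_i)|f_{x_i}f_{y_i}|}F(\frac12 f^2)$),
\[
\mathbb{E}[\Phi(\psi^{(e)})]=\int \Phi(f)\,\frac{e^{-H^{(e)}(f)}}{(2\pi)^{|V|/2}\det(G^{(e)})^{1/2}}\prod_{x}df_{x}
=\frac{\det(G)^{1/2}}{\det(G^{(e)})^{1/2}}\,\mathbb{E}\!\left[e^{H(\psi)-H^{(e)}(\psi)}\Phi(\psi)\right],
\]
and $H(f)-H^{(e)}(f)=-\sum_{i=1}^{n}C(e_i)f_{x_i}f_{y_i}$ by the definition of $H^{(e_1,\dots,e_n)}$. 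Multiplying through, the two determinant ratios cancel and one lands exactly on the right-hand side of \eqref{SecAltProof: EqE1}, with the weight $\prod_i e^{-C(e_i)(|\psi_{x_i}\psi_{y_i}|+\psi_{x_i}\psi_{y_i})}$.

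For \eqref{SecAltProof: EqE2} I would apply \eqref{SecAltProof: EqE1} with $F$ replaced by the product of $F$ and the indicator that all edges outside $\{e_1,\dots,e_n\}$ are opened, then condition on $\{\widehat{\mathcal{L}}_{\frac12}^{x}\}_{x\in V}$: by Corollary \ref{CorProbAbsEdge} the edges in $E\setminus\{e_1,\dots,e_n\}$ that are unvisited by $\mathcal{L}_{\frac12}$ are opened independently with probability $1-e^{-2C(x,y)\sqrt{\widehat{\mathcal{L}}^{x}_{\frac12}\widehat{\mathcal{L}}^{y}_{\frac12}}}=1-e^{-C(x,y)|\psi_x\psi_y|}$, while edges in $E\setminus\{e_1,\dots,e_n\}$ that \emph{are} visited by a loop are automatically inside a cluster, contributing the factor $1_{\psi_x\psi_y>0}$ (a visited edge forces both endpoints into the same sign cluster, so $\psi_x\psi_y>0$ a.s., and conversely an unvisited edge that is left closed has $\widehat{\mathcal{L}}^{x}\widehat{\mathcal{L}}^{y}>0$ with no forced sign relation — one checks that in all cases the total weight on $\{x,y\}$ is $1_{\psi_x\psi_y>0}(1-e^{-2C(x,y)|\psi_x\psi_y|})$, using $2\sqrt{\widehat{\mathcal{L}}^x\widehat{\mathcal{L}}^y}=|\psi_x\psi_y|$). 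Taking the expectation of this conditional expression and invoking \eqref{SecAltProof: EqE1} yields \eqref{SecAltProof: EqE2}.

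The main obstacle I anticipate is the bookkeeping in the last step: one must be careful to separate, among the edges of $E\setminus\{e_1,\dots,e_n\}$, those visited by $\mathcal{L}_{\frac12}$ (whose contribution to the ``$\mathfrak{C}'$-opened'' indicator is deterministic given $\mathcal{L}_{\frac12}$) from those not visited (independent coin flips with the stated probability), and to verify that in both cases the resulting weight reproduces the single clean factor $1_{\psi_x\psi_y>0}(1-e^{-2C(x,y)|\psi_x\psi_y|})$ written in \eqref{SecAltProof: EqE2} — this hinges precisely on the identification $\widehat{\mathcal{L}}_{\frac12}^{x}=\frac12\psi_x^2$ together with the sign-constancy on clusters. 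The Gaussian density manipulation and the determinant cancellations are routine once the finiteness of $V$ is in force.
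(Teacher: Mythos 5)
Your derivation of \eqref{SecAltProof: EqE1} is correct and follows essentially the paper's own route: the conditional law of $(\widehat{\mathcal{L}}^{x}_{\frac{1}{2}})_{x\in V}$ given that no $e_{i}$ is covered by a discrete loop, the determinant ratio \eqref{SecAltProof: EqQuotDet}, the conditional closing probabilities of Corollary \ref{CorProbAbsEdge}, and the density change through $H-H^{(e_{1},\dots,e_{n})}$; whether one inserts the coin-flip weights before or after the Gaussian density manipulation is immaterial.

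Your derivation of \eqref{SecAltProof: EqE2}, however, has a genuine gap. The indicator of the event ``every edge of $E\setminus\lbrace e_{1},\dots,e_{n}\rbrace$ belongs to $\bigcup_{\mathcal{C}'\in\mathfrak{C}'}\mathcal{C}'$'' is not a functional of the occupation field $(\widehat{\mathcal{L}}^{x}_{\frac{1}{2}})_{x\in V}$: it depends on which edges are traversed by the discrete loops and on the auxiliary coin flips. Hence \eqref{SecAltProof: EqE1} cannot be applied ``with $F$ replaced by'' that indicator, and conditioning on the occupation field does not repair this: Corollary \ref{CorProbAbsEdge} gives conditional independence given the full loop ensemble $\mathcal{L}_{\frac{1}{2}}$, whereas your computation would need the conditional law, given only the occupation field on the event $A_{n}$, of the random set of loop-covered edges --- an object computed neither in the paper nor in your sketch. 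Moreover the way you handle covered edges (``a visited edge forces $\psi_{x}\psi_{y}>0$ a.s.'') conflates the constructed field $\phi$, whose sign is constant on clusters by construction, with the reference Gaussian field $\psi$ on the right-hand side of \eqref{SecAltProof: EqE2}, which is a plain free field unrelated to the loops; for $\psi$ the event $\lbrace\psi_{x}\psi_{y}>0\rbrace$ has probability strictly between $0$ and $1$, so no almost-sure argument can produce that indicator. The paper avoids all of this by a purely combinatorial step: it writes the event $\lbrace E\setminus\bigcup_{\mathcal{C}'\in\mathfrak{C}'}\mathcal{C}'=\lbrace e_{1},\dots,e_{n}\rbrace\rbrace$ by inclusion--exclusion over the supersets $A\supseteq\lbrace e_{1},\dots,e_{n}\rbrace$ of closed edges, applies \eqref{SecAltProof: EqE1} to each term, and resums; the sign indicators then arise algebraically from the identity $1-e^{-C(x,y)(\vert\psi_{x}\psi_{y}\vert+\psi_{x}\psi_{y})}=1_{\psi_{x}\psi_{y}>0}(1-e^{-2C(x,y)\vert\psi_{x}\psi_{y}\vert})$. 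You should replace your last step by this inclusion--exclusion argument, which needs only the identity \eqref{SecAltProof: EqE1} you have already established.
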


\begin{proof}
We begin with the proof of \eqref{SecAltProof: EqE1}. Conditional on $e_{i}\not\in\bigcup_{\mathcal{C}
\in\mathfrak{C}_{1/2}}\mathcal{C}$
for every $i\in\lbrace 1,\dots,n\rbrace$, $(\widehat{\mathcal{L}}_{1/2}^{x})_{x\in V}$ has the same law as
$\frac{1}{2}\psi^{(e_{1},\dots,e_{n}) 2}$, that is to say
\begin{displaymath}
\mathbb{E}\Bigg[F(\widehat{\mathcal{L}}_{1/2})\Big\vert
\forall i\in\lbrace 1,\dots,n\rbrace,
e_{i}\not\in\bigcup_{\mathcal{C}
\in\mathfrak{C}_{1/2}}\mathcal{C}\Bigg]=
\mathbb{E}\left[F\Big(\dfrac{1}{2}\psi^{(e_{1},\dots,e_{n}) 2}
\Big)\right].
\end{displaymath}
Applying \eqref{SecAltProof: EqQuotDet} we get that
\begin{displaymath}
\mathbb{E}\Bigg[F(\widehat{\mathcal{L}}_{1/2});
\forall i\in\lbrace 1,\dots,n\rbrace,
e_{i}\not\in\bigcup_{\mathcal{C}
\in\mathfrak{C}_{1/2}}\mathcal{C}\Bigg]=
\dfrac{\det(G^{(e_{1},\dots,e_{n})})^{\frac{1}{2}}}
{\det(G)^{\frac{1}{2}}}
\mathbb{E}\left[F\Big(\dfrac{1}{2}\psi^{(e_{1},\dots,e_{n}) 2}
\Big)\right].
\end{displaymath}
But
\begin{multline*}
\dfrac{\det(G^{(e_{1},\dots,e_{n})})^{\frac{1}{2}}}
{\det(G)^{\frac{1}{2}}}
\mathbb{E}\left[F\Big(\dfrac{1}{2}\psi^{(e_{1},\dots,e_{n}) 2}
\Big)\right]\\=\dfrac{\det(G^{(e_{1},\dots,e_{n})})^{\frac{1}{2}}}
{\det(G)^{\frac{1}{2}}}\dfrac{1}{(2\pi)^{\frac{\vert V\vert}{2}}
\det(G^{(e_{1},\dots,e_{n})})^{\frac{1}{2}}}
\int e^{-H^{(e_{1},\dots,e_{n})}(f)}
F\Big(\dfrac{1}{2}f^{2}\Big)\prod_{x\in V}df_{x}\\=
\dfrac{1}{(2\pi)^{\frac{\vert V\vert}{2}}\det(G)^{\frac{1}{2}}}
\int e^{-H(f)}\prod_{i=1}^{n}e^{-C(e_{i})f_{x_{i}}f_{y_{i}}}
F\Big(\dfrac{1}{2}f^{2}\Big)\prod_{x\in V}df_{x}.
\end{multline*}
It follows that
\begin{displaymath}
\mathbb{E}\Bigg[F(\widehat{\mathcal{L}}_{1/2});
\forall i\in\lbrace 1,\dots,n\rbrace, e_{i}\not\in
\bigcup_{\mathcal{C}\in\mathfrak{C}_{1/2}}\mathcal{C}\Bigg]=
\mathbb{E}\left[\prod_{i=1}^{n}e^{-C(e_{i})\psi_{x_{i}}\psi_{y_{i}}}
F\Big(\dfrac{1}{2}\psi^{2}\Big)\right].
\end{displaymath}
Then (see \eqref{EqNiceExpr})
\begin{equation*}
\begin{split}
\mathbb{E}\Bigg[F(\widehat{\mathcal{L}}_{1/2})&;
\forall i\in\lbrace 1,\dots,n\rbrace,
e_{i}\not\in\bigcup_{\mathcal{C}'\in\mathfrak{C}'}\mathcal{C}'\Bigg]\\
&=\mathbb{E}\Bigg[\prod_{i=1}^{n}\exp\Big(-2C(e_{i})
\sqrt{\widehat{\mathcal{L}}_{1/2}^{x_{i}}
\widehat{\mathcal{L}}_{1/2}^{y_{i}}}\Big)
F(\widehat{\mathcal{L}}_{1/2});
\forall i\in\lbrace 1,\dots,n\rbrace, e_{i}\not\in
\bigcup_{\mathcal{C}\in\mathfrak{C}_{1/2}}\mathcal{C}\Bigg]
\\&=\mathbb{E}\left[\prod_{i=1}^{n}e^{-C(e_{i})
(\vert \psi_{x_{i}}\psi_{y_{i}}\vert+\psi_{x_{i}}\psi_{y_{i}})}
F\Big(\dfrac{1}{2}\psi^{2}\Big)\right].
\end{split}
\end{equation*}
For the proof of \eqref{SecAltProof: EqE2} we will use the inclusion-exclusion principle.
\begin{equation*}
\begin{split}
\mathbb{E}\Bigg[F(\widehat{\mathcal{L}}_{1/2});
E&\setminus\bigcup_{\mathcal{C}'\in\mathfrak{C}'}\mathcal{C}'=
\lbrace e_{1},\dots,e_{n}\rbrace\Bigg]\\=&
\sum_{
\begin{scriptsize}
\begin{array}{c}
A\subseteq E \\ 
\lbrace e_{1},\dots,e_{n}\rbrace\subseteq A
\end{array} 
\end{scriptsize}
}
(-1)^{\vert A\vert-n}
\mathbb{E}\Bigg[F(\widehat{\mathcal{L}}_{1/2});
\forall e\in A,
e\not\in\bigcup_{\mathcal{C}'\in\mathfrak{C}'}\mathcal{C}'\Bigg]
\\=&\sum_{
\begin{scriptsize}
\begin{array}{c}
A\subseteq E \\ 
\lbrace e_{1},\dots,e_{n}\rbrace\subseteq A
\end{array} 
\end{scriptsize}
}
(-1)^{\vert A\vert-n}
\mathbb{E}\Bigg[\prod_{\lbrace x,y\rbrace\in A}e^{-C(x,y)
(\vert \psi_{x}\psi_{y}\vert+\psi_{x}\psi_{y})}
F\Big(\dfrac{1}{2}\psi^{2}\Big)\Bigg]
\\=&\mathbb{E}\Bigg[\prod_{i=1}^{n}e^{-C(e_{i})(\vert \psi_{x_{i}}\psi_{y_{i}}\vert+\psi_{x_{i}}\psi_{y_{i}})}\\\times &
\prod_{\lbrace x,y\rbrace\in E\setminus\lbrace e_{1},\dots,e_{n}\rbrace}
\Big(1-e^{-C(x,y)(\vert \psi_{x}\psi_{y}\vert+\psi_{x}\psi_{y})}\Big)
F\Big(\dfrac{1}{2}\psi^{2}\Big)\Bigg].
\end{split}
\end{equation*}
But
\begin{displaymath}
1-e^{-C(x,y)(\vert \psi_{x}\psi_{y}\vert+\psi_{x}\psi_{y})}=
1_{\psi_{x}\psi_{y}>0}(1-e^{-2C(x,y)\vert \psi_{x}\psi_{y}\vert}).
\end{displaymath}
Thus we get \eqref{SecAltProof: EqE2}.
\end{proof}

\begin{proposition}
\label{PropAltProof}
The field $(\phi_{x})_{x\in V}$ constructed in Theorem \ref{ThmAltDescCoupl} bis has the law of a Gaussian free field on $\mathcal{G}$.
\end{proposition}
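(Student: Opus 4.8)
The plan is to check directly that $\mathbb{E}[F(\phi)]=\mathbb{E}[F(\psi)]$ for every bounded measurable functional $F$ of the field, which determines the law of $\phi$. It suffices to treat $V$ finite: the general case follows by exhausting $\mathcal{G}$ by an increasing sequence of finite subnetworks (with killing added on their boundaries), applying the finite case to each of them, and letting them increase to $\mathcal{G}$, since the loop soups, their occupation fields, the partitions $\mathfrak{C}'$ and the Gaussian free fields then converge accordingly. So assume $V$ finite and split according to the finite random set $E\setminus\bigcup_{\mathcal{C}'\in\mathfrak{C}'}\mathcal{C}'$ of edges belonging to no cluster of $\mathfrak{C}'$:
\[
\mathbb{E}[F(\phi)]=\sum_{\lbrace e_{1},\dots,e_{n}\rbrace\subseteq E}\mathbb{E}\Big[F(\phi);\,E\setminus\bigcup_{\mathcal{C}'\in\mathfrak{C}'}\mathcal{C}'=\lbrace e_{1},\dots,e_{n}\rbrace\Big].
\]

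On the event $\lbrace E\setminus\bigcup_{\mathcal{C}'\in\mathfrak{C}'}\mathcal{C}'=\lbrace e_{1},\dots,e_{n}\rbrace\rbrace$ the partition $\mathfrak{C}'$ is exactly that of $V$ into the connected components of $\mathcal{G}^{(e_{1},\dots,e_{n})}$; call $m$ their number. Conditioning on everything except the signs $(\sigma(\mathcal{C}'))_{\mathcal{C}'\in\mathfrak{C}'}$, the field $\phi$ is $(\sqrt{2\widehat{\mathcal{L}}^{x}_{\frac{1}{2}}})_{x\in V}$ decorated by i.i.d. uniform $\lbrace-1,+1\rbrace$ signs attached to these components, so on this event the conditional expectation of $F(\phi)$ equals $\Phi_{F}(\widehat{\mathcal{L}}_{\frac{1}{2}})$, where, $\mathfrak{C}'$ now denoting the components of $\mathcal{G}^{(e_{1},\dots,e_{n})}$, $\Phi_{F}(\ell):=2^{-m}\sum_{\sigma\in\lbrace-1,+1\rbrace^{\mathfrak{C}'}}F\big((\sigma(\mathcal{C}'(x))\sqrt{2\ell_{x}})_{x\in V}\big)$ is a bounded functional of the occupation field alone. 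Applying \eqref{SecAltProof: EqE2} with $F$ replaced by $\Phi_{F}$ rewrites the corresponding term as $\mathbb{E}[W(\psi)\Phi_{F}(\frac{1}{2}\psi^{2})]$, with
\[
W(f):=\prod_{i=1}^{n}e^{-C(e_{i})(\vert f_{x_{i}}f_{y_{i}}\vert+f_{x_{i}}f_{y_{i}})}\prod_{\substack{\lbrace x,y\rbrace\in E\\ \lbrace x,y\rbrace\neq e_{1},\dots,e_{n}}}1_{f_{x}f_{y}>0}\big(1-e^{-2C(x,y)\vert f_{x}f_{y}\vert}\big).
\]
Now on $\lbrace W(\psi)\neq 0\rbrace$ the indicators force $\psi_{x}\psi_{y}>0$ along every edge of $\mathcal{G}^{(e_{1},\dots,e_{n})}$, hence $\psi$ has a constant sign $\tau(\mathcal{C}')\in\lbrace-1,+1\rbrace$ on each component $\mathcal{C}'$, so that $\sqrt{2\cdot\frac{1}{2}\psi_{x}^{2}}=\vert\psi_{x}\vert=\tau(\mathcal{C}'(x))\psi_{x}$; since $\sigma\mapsto\sigma\tau$ is an involutive bijection of $\lbrace-1,+1\rbrace^{\mathfrak{C}'}$ for fixed $\tau$, this gives (a.s.) $W(\psi)\Phi_{F}(\frac{1}{2}\psi^{2})=2^{-m}\sum_{\rho\in\lbrace-1,+1\rbrace^{\mathfrak{C}'}}W(\psi)F(\psi^{\rho})$, where $\psi^{\rho}_{x}:=\rho(\mathcal{C}'(x))\psi_{x}$.

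The heart of the argument, and the point to get right, is the identity $\mathbb{E}[W(\psi)F(\psi^{\rho})]=\mathbb{E}[W(\psi)F(\psi)]$ for every $\rho$. One proves it by the change of variables $\theta=\psi^{\rho}$ (a coordinate sign flip, Jacobian $\pm 1$) in $\mathbb{E}[W(\psi)F(\psi^{\rho})]=\int W(f)F(f^{\rho})\frac{1}{Z}e^{-H(f)}df$, using the explicit density of $\psi$ recalled before Lemma \ref{LemGFFAbsEdges}. Flipping the sign of $f$ on a union of components of $\mathcal{G}^{(e_{1},\dots,e_{n})}$ leaves all terms of $H$ unchanged except the terms $C(e_{i})(f_{x_{i}}-f_{y_{i}})^{2}$ for those $e_{i}$ joining two components of opposite new sign, which yields $H(\theta^{\rho})-H(\theta)=2\sum_{i\in D(\rho)}C(e_{i})\theta_{x_{i}}\theta_{y_{i}}$, $D(\rho)$ being the set of such indices; and the same flip leaves $W(\theta^{\rho})$ equal to $W(\theta)$ except that for $i\in D(\rho)$ the factor $e^{-C(e_{i})(\vert\theta_{x_{i}}\theta_{y_{i}}\vert+\theta_{x_{i}}\theta_{y_{i}})}$ turns into $e^{-C(e_{i})(\vert\theta_{x_{i}}\theta_{y_{i}}\vert-\theta_{x_{i}}\theta_{y_{i}})}$ (all other factors involve $\theta$ only through products $\theta_{x}\theta_{y}$ over edges kept inside one component, hence are invariant). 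These two changes cancel exactly, since $(\vert t\vert-t)+2t=\vert t\vert+t$, so $W(\theta^{\rho})e^{-(H(\theta^{\rho})-H(\theta))}=W(\theta)$ and the change of variables gives $\mathbb{E}[W(\psi)F(\psi^{\rho})]=\int W(\theta)F(\theta)\frac{1}{Z}e^{-H(\theta)}d\theta=\mathbb{E}[W(\psi)F(\psi)]$. This is precisely where the form $1-e^{-2C(x,y)\sqrt{\widehat{\mathcal{L}}^{x}_{\frac{1}{2}}\widehat{\mathcal{L}}^{y}_{\frac{1}{2}}}}=1-e^{-C(x,y)\vert\psi_{x}\psi_{y}\vert}$ of the opening probability (see \eqref{EqNiceExpr}) is used in an essential way.

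Then one assembles: the $\rho$-sum has $2^{m}$ terms, each equal to $\mathbb{E}[W(\psi)F(\psi)]$, so $\mathbb{E}[F(\phi);E\setminus\bigcup_{\mathcal{C}'\in\mathfrak{C}'}\mathcal{C}'=\lbrace e_{1},\dots,e_{n}\rbrace]=\mathbb{E}[W(\psi)F(\psi)]$; summing over all $\lbrace e_{1},\dots,e_{n}\rbrace\subseteq E$ and factoring $F(\psi)$ out, the edgewise products recombine through $\sum_{A\subseteq E}\prod_{e\in A}a_{e}\prod_{e\in E\setminus A}b_{e}=\prod_{e\in E}(a_{e}+b_{e})$ with $a_{\lbrace x,y\rbrace}=e^{-C(x,y)(\vert\psi_{x}\psi_{y}\vert+\psi_{x}\psi_{y})}$ and $b_{\lbrace x,y\rbrace}=1_{\psi_{x}\psi_{y}>0}(1-e^{-2C(x,y)\vert\psi_{x}\psi_{y}\vert})$; since $a_{e}+b_{e}=1$ a.s. for every edge (distinguish $\psi_{x}\psi_{y}>0$ and $\psi_{x}\psi_{y}<0$), the product is $1$ and $\mathbb{E}[F(\phi)]=\mathbb{E}[F(\psi)]$, which is the claim. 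I expect the sign-flip cancellation identity $\mathbb{E}[W(\psi)F(\psi^{\rho})]=\mathbb{E}[W(\psi)F(\psi)]$ to be the main obstacle; the approximation of infinite $V$ by finite subnetworks is routine, but the behaviour of the cluster partition under it has to be handled with some care.
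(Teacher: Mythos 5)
Your proposal is correct and takes essentially the same route as the paper: reduction to finite $V$, decomposition over the random set $E\setminus\bigcup_{\mathcal{C}'\in\mathfrak{C}'}\mathcal{C}'$, replacement of $F(\phi)$ by the sign-averaged functional of $\widehat{\mathcal{L}}_{\frac{1}{2}}$, application of \eqref{SecAltProof: EqE2}, and a finite-subgraph approximation for infinite $V$. The only (cosmetic) difference is in the last symmetry step: where the paper observes that the factor $\prod_{i}e^{-C(e_{i})\psi_{x_{i}}\psi_{y_{i}}}$ turns the density of $\psi$ into that of $\psi^{(e_{1},\dots,e_{n})}$, whose restrictions to the components of $\mathcal{G}^{(e_{1},\dots,e_{n})}$ are independent and symmetric so the conditional signs are uniform and independent, you verify the same invariance by an explicit per-component sign-flip change of variables, i.e.\ $W(\theta^{\rho})e^{-H(\theta^{\rho})}=W(\theta)e^{-H(\theta)}$, which is exactly the statement that $H^{(e_{1},\dots,e_{n})}$ is invariant under such flips.
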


\begin{proof}
First we consider the case of $V$ being finite and use the identity 
\eqref{SecAltProof: EqE2}. Let $F$ be a bounded functional on fields. Given a subset of edges $A\subseteq E$, we will denote
by $\mathfrak{C}(A)$ the partition of $V$ obtained by removing from $\mathcal{G}$ the edges in $A$ and taking the connected components.
Let $\mathcal{S}_{A}(F)$ be the functional on non-negative fields defined as
\begin{displaymath}
\mathcal{S}_{A}(F)(f):=\dfrac{1}{2^{\vert \mathfrak{C}(A)\vert}}
\sum_{\sigma\in\lbrace -1,+1\rbrace^{\mathfrak{C}(A)}}
F(\sigma\sqrt{2f}),
\end{displaymath}
where $F(\sigma\sqrt{2f})$ means that we have made a choice of a sign which is the same on each equivalence class of the partition  $\mathfrak{C}(A)$. 

Let $e_{1}=\lbrace x_{1},y_{1}\rbrace,\dots,
e_{n}=\lbrace x_{n},y_{n}\rbrace$ be $n$ different edges of $\mathcal{G}$. By construction
\begin{multline}
\mathbb{E}\Bigg[F(\phi);E\setminus\bigcup_{\mathcal{C}'\in\mathfrak{C}'}
\mathcal{C}'=\lbrace e_{1},\dots,e_{n}\rbrace\Bigg]\\=
\mathbb{E}\Bigg[\mathcal{S}_{\lbrace e_{1},\dots,e_{n}\rbrace}(F)
(\widehat{\mathcal{L}}_{1/2});
E\setminus\bigcup_{\mathcal{C}'\in\mathfrak{C}'}\mathcal{C}'=
\lbrace e_{1},\dots,e_{n}\rbrace\Bigg].
\end{multline}
From \eqref{SecAltProof: EqE2} follows that this in turn equals
\begin{multline}
\label{SecAltProof: EqBigExp}
\mathbb{E}\Bigg[\prod_{i=1}^{n}e^{-C(e_{i})(\vert \psi_{x_{i}}\psi_{y_{i}}\vert+\psi_{x_{i}}\psi_{y_{i}})}
\prod_{
\begin{scriptsize}
\begin{array}{c}
\lbrace x,y\rbrace\in \\ 
E\setminus\lbrace e_{1},\dots,e_{n}\rbrace
\end{array} 
\end{scriptsize}
}1_{\psi_{x}\psi_{y}>0}
(1-e^{-2C(x,y)\vert \psi_{x}\psi_{y}\vert})
\\\times\mathcal{S}_{\lbrace e_{1},\dots,e_{n}\rbrace}(F)
\Big(\dfrac{1}{2}\psi^{2}\Big)\Bigg].
\end{multline}

In \eqref{SecAltProof: EqBigExp} the factor
\begin{displaymath}
\prod_{i=1}^{n}e^{-C(e_{i})\vert \psi_{x_{i}}\psi_{y_{i}}\vert}
\prod_{
\begin{scriptsize}
\begin{array}{c}
\lbrace x,y\rbrace\in \\ 
E\setminus\lbrace e_{1},\dots,e_{n}\rbrace
\end{array} 
\end{scriptsize}
}
(1-e^{-2C(x,y)\vert \psi_{x}\psi_{y}\vert})
\mathcal{S}_{\lbrace e_{1},\dots,e_{n}\rbrace}(F)
\Big(\dfrac{1}{2}\psi^{2}\Big)
\end{displaymath}
depends only on the absolute value $\vert\psi\vert$. Two other factors take in account the sign of $\psi$:
\begin{equation}
\label{SecAltProof: EqSign1}
\prod_{i=1}^{n}e^{-C(e_{i})\psi_{x_{i}}\psi_{y_{i}}}
\end{equation}
and
\begin{equation}
\label{SecAltProof: EqSign2}
\prod_{
\begin{scriptsize}
\begin{array}{c}
\lbrace x,y\rbrace\in \\ 
E\setminus\lbrace e_{1},\dots,e_{n}\rbrace
\end{array} 
\end{scriptsize}
}1_{\psi_{x}\psi_{y}>0}.
\end{equation}
The factor \eqref{SecAltProof: EqSign1} multiplied by the non normalized density $e^{-H(f)}$ of $\psi$ gives the non-normalized density
$e^{-H^{(e_{1},\dots,e_{n})}(f)}$ of $\psi^{{(e_{1},\dots,e_{n})}}$.
The Gaussian free field on $\mathcal{G}^{(e_{1},\dots,e_{n})}$.
$\psi^{{(e_{1},\dots,e_{n})}}$ is independent on each connected component of $\mathcal{G}^{(e_{1},\dots,e_{n})}$. The factor
\eqref{SecAltProof: EqSign2} means that we restrict to the event on which the field has constant sign on each connected component of
$\mathcal{G}^{(e_{1},\dots,e_{n})}$. But conditional on
$\psi^{{(e_{1},\dots,e_{n})}}$ having constant sign on each connected component of $\mathcal{G}^{(e_{1},\dots,e_{n})}$, these signs are independent on each connected component and $-$ and $+$ have equal probability $\frac{1}{2}$. This implies that \eqref{SecAltProof: EqBigExp} equals 
\begin{displaymath}
\mathbb{E}\Bigg[\prod_{i=1}^{n}e^{-C(e_{i})(\vert \psi_{x_{i}}\psi_{y_{i}}\vert+\psi_{x_{i}}\psi_{y_{i}})}
\prod_{
\begin{scriptsize}
\begin{array}{c}
\lbrace x,y\rbrace\in \\ 
E\setminus\lbrace e_{1},\dots,e_{n}\rbrace
\end{array} 
\end{scriptsize}
}1_{\psi_{x}\psi_{y}>0}
(1-e^{-2C(x,y)\vert \psi_{x}\psi_{y}\vert})F(\psi)\Bigg].
\end{displaymath}
Then summing on all possible values of $E\setminus\bigcup_{\mathcal{C}'\in\mathfrak{C}'}
\mathcal{C}'$ we get $\mathbb{E}[F(\phi)]=\mathbb{E}[F(\psi)]$ and deduce that $\phi$ and $\psi$ are
equidistributed.

For the case of infinite $V$ we approximate the graph $\mathcal{G}$ by an increasing sequence of finite connected sub-graphs.
Let $x_{0}\in V$. Let $V_{n}$ be the set of vertices separated from $x_{0}$ by at most $n$ edges. For $n\geq 1$ let $E_{n}$ be the set of edges either connecting two vertices in $V_{n-1}$ or a vertex in 
$V_{n}\setminus V_{n-1}$ to a vertex in $V_{n-1}$. 
$\mathcal{G}_{n}:=(V_{n},E_{n})$ is a connected sub-graph of $\mathcal{G}$. We consider the Markov jump process on $\mathcal{G}_{n}$ with transition rates given by the conductances restricted to $E_{n}$,
the killing measure $\kappa$ restricted to $V_{n-1}$ and an additional instant killing at reaching $V_{n}\setminus V_{n-1}$. Let $(G^{V_{n-1}}(x,y))_{x,y\in V_{n-1}}$ be the corresponding Green's function and 
$(\psi^{V_{n-1}}_{x})_{x\in V_{n-1}}$ the corresponding Gaussian free field. The associated Poisson ensemble of loops of parameter $\frac{1}{2}$ is $\lbrace\gamma\in\mathcal{L}
_{1/2}\vert\gamma~\text{stays in}~V_{n-1}\rbrace$. Let
$(\phi^{V_{n-1}}_{x})_{x\in V_{n-1}}$ be the field obtained by applying
the procedure described in Theorem \ref{ThmAltDescCoupl} bis to
$\lbrace\gamma\in\mathcal{L}
_{1/2}\vert\gamma~\text{stays in}~V_{n-1}\rbrace$. 
As shown previously $\phi^{V_{n-1}}$ has same law as $\psi^{V_{n-1}}$. Moreover $\psi^{V_{n-1}}$ converges in law to
$\psi$. To conclude that $\phi$ and $\psi$ have same law,
we need to show the convergence in law of
$\phi^{V_{n-1}}$  to $\phi$
(i.e. the convergence in law of
finite-dimensional marginals), which we detail below.

We will couple
the $(\phi^{V_{n-1}})_{n\geq 1}$ and 
$\phi$ on the same probability space.
As already described, we use one Poisson ensemble of loops
$\mathcal{L}_{1/2}$, and its subsets, 
for $\phi$ and for $\phi^{V_{n-1}}$.
In this way,
for every $x\in V$,
a.s. the subset of loops
\begin{displaymath}
\lbrace\gamma\in
\mathcal{L}_{1/2}\vert
\gamma \text{ visits } x
\text{ and }
V\setminus V_{n}\rbrace
\end{displaymath}
is empty for $n$ large enough, and the sequence
$(\vert\phi^{V_{n-1}}_{x}\vert)_{n\geq 1}$ is 
thus stationary for $n$ large enough, equal to $\vert \phi_{x}\vert$.
Further, we consider $(U_{e})_{e\in E}$ a family of i.i.d. uniform r.v.'s in $(0,1)$, independent from $\mathcal{L}_{1/2}$.
The Bernoulli r.v.'s in 
Theorem \ref{ThmAltDescCoupl} bis are obtained as
\begin{displaymath}
1_{U_{\{x,y\}} < 1-\exp(-C(x,y)\vert \phi_{x}\phi_{y}\vert)},
\text{ respectively }
1_{U_{\{x,y\}} < 1-\exp(-C(x,y)
\vert \phi^{V_{n-1}}_{x}\phi^{V_{n-1}}_{y}\vert)}.
\end{displaymath}
For fixed $\{ x,y\}\in E$, a.s. the Bernoulli r.v.'s are non-decreasing in $n$ and stationary for $n$ large enough, because
$\vert \phi^{V_{n-1}}_{x}\vert$ and
$\vert \phi^{V_{n-1}}_{y}\vert$ are.
$\mathcal{L}_{1/2}$ and $(U_{e})_{e\in E}$
determine the clusters
$\mathfrak{C}'$ in $V$ and 
$\mathfrak{C}'_{n-1}$ in $V_{n-1}$.
By construction there is a monotonicity:
each cluster in
$\mathfrak{C}'_{n-1}$ is contained in a cluster in
$\mathfrak{C}'_{n}$. 
Indeed, by going from $n-1$ to $n$, one adds more loops and does not decrease the Bernoulli r.v.'s above.
Moreover, if two vertices $x,y\in V$ are in the same cluster in
$\mathfrak{C}'$, then they are connected by a finite number of loops in $\mathcal{L}_{1/2}$ and a finite number of additionally opened edges.
Thus, $x$ and $y$ are in the same cluster in $\mathfrak{C}'_{n-1}$
for $n$ large enough.
Finally, we choose $(x_{i})_{i\geq 1}$ an enumeration of
$V\setminus\{x_{0}\}$, such that the graph distance between
$x_{i}$ and $x_{0}$ is non-decreasing, and 
take $(\check{\sigma}_{i})_{i\in\mathbb{N}}$ an i.i.d. family of 
uniform r.v. in $\lbrace -1,+1\rbrace$, independent from
$(\mathcal{L}_{1/2},(U_{e})_{e\in E})$.
For $\mathcal{C}'$ a cluster in $\mathfrak{C}'$,
respectively in $\mathfrak{C}'_{n-1}$, we set
\begin{displaymath}
\sigma(\mathcal{C}')=
\check{\sigma}_{\min\{i\in\mathbb{N}\vert x_{i}\in \mathcal{C}'\}},
\text{ respectively }
\sigma_{n-1}(\mathcal{C}')=
\check{\sigma}_{\min\{i\in\mathbb{N}\vert x_{i}\in \mathcal{C}'\}}.
\end{displaymath}
The signs $\sigma$ on $\mathfrak{C}'$,
respectively
$\sigma_{n-1}$ on $\mathfrak{C}_{n-1}'$,
complete the construction of
$\phi$, respectively of $\phi^{V_{n-1}}$. 
In this construction, for every $x\in V$, a.s.
$\operatorname{sign}(\phi^{V_{n-1}}_{x})
=\operatorname{sign}(\phi_{x})$ for
$n$ large enough, and thus
$\phi^{V_{n-1}}_{x}=\phi_{x}$
for $n$ large enough.
\end{proof}

\section{Application to percolation by loops}
\label{SecPerco}

In this section we consider the lattices
\begin{itemize}
\item $\mathbb{Z}^{2}$ with uniform conductances and a non-zero uniform killing measure,
\item the discrete half-plane $\mathbb{Z}\times\mathbb{N}$ with instantaneous killing on the boundary $\mathbb{Z}\times\lbrace 0\rbrace$
and no killing elsewhere,
\item $\mathbb{Z}^{d}$, $d\geq 3$, with uniform conductances and no killing measure,
\end{itemize}
and show that there is no infinite loop cluster in $\mathcal{L}_{1/2}$. Obviously there cannot be such an infinite cluster if the Gaussian free field only has bounded sign clusters, which is the case for $\mathbb{Z}^{2}$ with uniform conductances and a non-zero uniform killing measure (see Theorem $14.3$ in \cite{HaggstromJonasson2006UniqPerc}). However on $\mathbb{Z}^{d}$ for $d$ sufficiently large the Gaussian free field has infinite sign clusters, one of each sign, at is it believed that is the case for all $d\geq 3$ (\cite{RodriguezSznitman2013PercGFF}). But at the level of the metric graph there are no unbounded sign clusters of the free field.

The uniqueness of an infinite cluster of loops on $\mathbb{Z}^{d}$, $d\geq 3$ and on $\mathbb{Z}^{2}$ with uniform killing measure was shown applying Burton-Keane's argument in \cite{ChangSapozhnikov2014PercLoops}. Next we adapt this argument to the case of loops on the discrete half-plane.

\begin{proposition}
\label{PropUniqHalfPlane}
On the discrete half-plane $\mathbb{Z}\times\mathbb{N}$ with instantaneous killing on the boundary $\mathbb{Z}\times\lbrace 0\rbrace$, a.s. $\mathcal{L}_{1/2}$ has at most one infinite cluster.
\end{proposition}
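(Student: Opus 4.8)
The plan is to run the Burton--Keane uniqueness scheme, in the form already used for $\mathbb{Z}^{d}$ loop soups in \cite{ChangSapozhnikov2014PercLoops}, the only genuinely new feature being that the half-plane $\mathbb{Z}\times\mathbb{N}$ (with the bottom row $\mathbb{Z}\times\{0\}$ killing) is \emph{not} quasi-transitive: it carries only the one-parameter group $(\theta_{k})_{k\in\mathbb{Z}}$ of horizontal translations. \emph{Step 1 (the number of infinite clusters is a.s.\ constant).} The loop measure on $\mathbb{Z}\times\mathbb{N}$ is invariant under the $\theta_{k}$, hence so is the law of $\mathcal{L}_{\frac{1}{2}}$; being a Poisson point process with $\theta$-invariant intensity, $\mathcal{L}_{\frac{1}{2}}$ is mixing, in particular ergodic, under $(\theta_{k})_{k\in\mathbb{Z}}$. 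The number $N$ of infinite loop clusters is $\theta$-invariant, so $N$ equals a deterministic constant in $\{0,1,2,\dots,\infty\}$ a.s. It then suffices to exclude $2\le N<\infty$ and $N=\infty$.

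\emph{Step 2 (excluding $2\le N<\infty$).} Here I would use an insertion-tolerance (``finite energy'') argument for the loop soup, which is already available from \cite{ChangSapozhnikov2014PercLoops}: conditionally on the loops of $\mathcal{L}_{\frac{1}{2}}$ not entirely contained in a given finite box $B$, the loops contained in $B$ form an independent Poisson ensemble, and with positive conditional probability that ensemble contains a loop inside $B$ joining two prescribed traces of clusters meeting $B$. If $N=n$ with $2\le n<\infty$, then for $B$ large two distinct infinite clusters meet $B$ with positive probability; since adding loops can only merge clusters, on an event of positive probability $N$ is strictly decreased, whereas removing and re-sampling the loops contained in $B$ leaves the law of the configuration unchanged. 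This contradicts the a.s.\ constancy of $N$.

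\emph{Step 3 (excluding $N=\infty$).} Call $x\in\mathbb{Z}\times\mathbb{N}$ (with positive second coordinate) a trifurcation point if it lies in an infinite cluster $\mathcal{C}$ and deleting $x$ together with its incident edges of $\mathcal{C}$ leaves at least three infinite components. By the deterministic Burton--Keane combinatorial lemma, for any finite region $R$ the number of trifurcation points in $R$ is bounded by the number of vertices of $R$ adjacent to the complement of $R$ (vertices from which an infinite component can escape); I would apply this to $R_{n}=[-n,n]\times[0,n]$, noting that because of the killing on $\mathbb{Z}\times\{0\}$ an escape through the bottom produces no infinite component, so only the left, right and top of $R_{n}$ count and $\#\partial R_{n}=O(n)$. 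On the other hand, a Step~2-type surgery that creates a trifurcation at a prescribed vertex whenever at least three infinite clusters reach a box around it shows, under $N=\infty$, that $\sum_{h\ge 1}\mathbb{P}((0,h)\text{ is a trifurcation point})=+\infty$; then by horizontal translation invariance $\mathbb{E}[\#\{\text{trifurcation points in }R_{n}\}]=(2n+1)\sum_{h=1}^{n}\mathbb{P}((0,h)\text{ is a trifurcation point})$ grows strictly faster than $n$, contradicting the $O(n)$ bound. Hence $N\le 1$.

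The main obstacle is exactly the last lower bound in Step~3: in the $\mathbb{Z}^{d}$ setting of \cite{ChangSapozhnikov2014PercLoops} full translation invariance immediately yields a positive density of trifurcation points, so their number in a box of side $n$ is of order $n^{d}$ against an $O(n^{d-1})$ boundary; in the half-plane the invariant direction is only one-dimensional, so one must instead prove that the expected number of trifurcation points in a single column is infinite, i.e.\ that the trifurcation probabilities do not become summable in the height $h$. Establishing this non-degeneracy --- by showing that, under $N=\infty$, at least three infinite clusters reach a bounded box around a vertex of height $h$ with a probability that does not decay to $0$ too fast, and then performing the local surgery at that vertex --- is where the real work of the proof lies; everything else is a routine transcription of the Burton--Keane argument.
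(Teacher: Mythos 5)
Your Steps 1 and 2 match the structure of the paper's proof (ergodicity under horizontal translations, then reduction to excluding infinitely many, the $0,1,\infty$ dichotomy being treated as in the Bernoulli case), but your Step 3 stops exactly where the proof actually has to start. You correctly observe that horizontal invariance alone only gives an $O(n)$ boundary bound against $(2n+1)\sum_{h=1}^{n}\mathbb{P}\big((0,h)\text{ is a trifurcation}\big)$, and that one must show this sum diverges; you then declare this non-degeneracy in the height $h$ to be ``where the real work of the proof lies'' without supplying it. As written, there is no argument preventing the trifurcation probabilities (for trifurcations of the full soup $\mathcal{L}_{\frac{1}{2}}$) from being summable in $h$, so the contradiction is not established; this is a genuine gap, not a routine transcription.

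The paper's way around the lack of vertical invariance is a different definition of trifurcation. For $a\in\mathbb{N}$ let $\mathcal{L}_{\frac{1}{2}}^{>a}$ be the loops whose range lies in $\mathbb{Z}\times[a+1,+\infty)$; because the killing is instantaneous on $\mathbb{Z}\times\{0\}$, the ensembles $\mathcal{L}_{\frac{1}{2}}^{>a}$ are all vertical translates in law of $\mathcal{L}_{\frac{1}{2}}=\mathcal{L}_{\frac{1}{2}}^{>0}$. A vertex $(x_{1},a+1)$ is called an \emph{upper trifurcation} if it is a trifurcation for the clusters of $\mathcal{L}_{\frac{1}{2}}^{>a}$ (not of the full soup). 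This makes the upper-trifurcation probability $p_{3}$ the same for every vertex of $\mathbb{Z}\times\mathbb{N}^{\ast}$, and at height $1$ the definition coincides with the full soup, so $p_{3}>0$ as soon as three infinite clusters occur with positive probability (by the same local surgery you sketch in Step 2). This yields the order-$n^{2}$ lower bound you were missing. A second point your sketch does not address then arises: since trifurcations at different heights are defined with respect to different sub-ensembles, the deterministic Burton--Keane counting lemma does not apply verbatim. The paper handles this by enumerating the upper trifurcations $z_{i}$ in non-increasing order of height and noting that the arms already constructed for higher trifurcations are covered by loops of $\mathcal{L}_{\frac{1}{2}}^{>a_{i}}$, so two distinct arms of $z_{i}$ cannot meet the same earlier component; only then does the forest argument give $\sharp\mathcal{T}_{n}\leq 4n+3$ and hence $p_{3}=0$. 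Without the upper-trifurcation device (or some substitute proving divergence of the column sum), your Step 3 does not go through.
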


\begin{proof}
The general layout of the proof is the same as for the i.i.d. Bernoulli percolation. See Section $8.2$ in \cite{Grimmett1999Percolation}.
The law of $\mathcal{L}_{1/2}$ is ergodic for the horizontal translations and hence the number of infinite clusters in $\mathfrak{C}_{1/2}$ is a.s. constant. The next step is to show that this constant can only be $0$, $1$ or $+\infty$. This can be proved similarly to the i.i.d. Bernoulli percolation case and we omit it. Then one has to rule out the case of infinitely many infinite clusters.

For $a\in\mathbb{N}$ let
\begin{displaymath}
\mathcal{L}_{1/2}^{>a}:=
\lbrace \gamma\in \mathcal{L}_{1/2}\vert 
\operatorname{Range}(\gamma)
\subseteq \mathbb{Z}\times [a+1,+\infty)\rbrace.
\end{displaymath}
$\mathcal{L}_{1/2}^{>0}=\mathcal{L}_{1/2}$ and all the
$\mathcal{L}_{1/2}^{>a}$ have the same law up to a vertical translation. A vertex $(x_{1},a+1)\in\mathbb{Z}\times\mathbb{N}^{\ast}$
will be an \textit{upper trifurcation} if it is contained in an infinite cluster of $\mathcal{L}_{1/2}^{>a}$ and if this vertex and adjacent edges are removed the cluster splits in at least three infinite clusters. Every vertex of $\mathbb{Z}\times\mathbb{N}^{\ast}$ has equal probability to be an upper trifurcation. Let it be $p_{3}$. If with positive probability $\mathcal{L}_{1/2}$ has at least three infinite clusters then a vertex in $\mathbb{Z}\times\lbrace 1\rbrace$ has a positive probability to be an upper trifurcation. This can be proved in the similar way as in i.i.d. Bernoulli case. Consequently $p_{3}>0$.

Let $\mathcal{T}_{n}$ be the set of upper trifurcations in 
$[-n,n]\times [1,n]$. Let $(z_{i})_{1\leq i\leq N_{n}}$ be an enumeration of $\mathcal{T}_{n}$ such that the sequence of second coordinates of $z_{i}$, $(a_{i}+1)_{1\leq i\leq N_{n}}$, is non-increasing. Given $z_{i}$, there are three simple paths $c_{1}(z_{i})$, $c_{2}(z_{i})$ and $c_{3}(z_{i})$ that connect $z_{i}$ to three different vertices on
\begin{multline*}
\partial ([-n-1,n+1]\times [1,n+1])=\\
\lbrace -n-1\rbrace\times [1,n+1]\cup\lbrace n+1\rbrace\times [1,n+1]
\cup [-n-1,n+1]\times \lbrace n+1\rbrace
\end{multline*}
that do not intersect outside $z_{i}$ and such that 
$c_{1}(z_{i})\setminus \lbrace z_{i}\rbrace$, $c_{2}(z_{i})\setminus \lbrace z_{i}\rbrace$ and $c_{3}(z_{i})\setminus \lbrace z_{i}\rbrace$ are contained in three different clusters induced by the clusters of $\mathcal{L}_{1/2}^{>a_{i}}$ after deleting the vertex $z_{i}$. For $i\geq 2$, two different paths $c_{j}(z_{i})\setminus \lbrace z_{i}\rbrace$ and 
$c_{j'}(z_{i})\setminus \lbrace z_{i}\rbrace$ cannot intersect the same connected component of
\begin{displaymath}
\bigcup_{1\leq i'\leq i-1}(c_{1}(z_{i'})\cup c_{2}(z_{i'})
\cup c_{3}(z_{i'}))
\end{displaymath}
because the set above is covered by the loops in $\mathcal{L}_{1/2}^{>a_{i}}$. Then as in Burton-Keane's proof one sets $\tilde{c}_{j}(z_{1})=c_{j}(z_{1})$ and iteratively constructs the family of simple paths $(\tilde{c}_{j}(z_{i}))_{1\leq j\leq 3, 2\leq i\leq N_{n}}$ where the path $\tilde{c}_{j}(z_{i})$ starts from $z_{i}$ as $c_{j}(z_{i})$ and as soon as it meets a path $\tilde{c}$ from the family
$(\tilde{c}_{j'}(z_{i'}))_{1\leq j'\leq 3, 1\leq i'\leq i-1}$ it continues as $\tilde{c}$. The graph formed by the paths
$(\tilde{c}_{j}(z_{i}))_{1\leq j\leq 3, 1\leq i\leq N_{n}}$ has no cycles, its leaves (vertices of degree $1$) are contained in $\partial ([-n-1,n+1]\times [1,n+1])$ and the vertices $z_{i}$ have degree $3$ at least. Thus
\begin{displaymath}
\sharp \mathcal{T}_{n} \leq \sharp \partial ([-n-1,n+1]\times [1,n+1])=4n+3.
\end{displaymath}
The expectation of $\sharp \mathcal{T}_{n}$ cannot grow as fast as $n^{2}$ hence $p_{3}=0$.
\end{proof}

Next we give a simple upper bound for the probability of two vertices belonging to the same cluster of $\mathcal{L}_{1/2}$. This is an inequality that holds on all graphs and not specifically on periodic ones as considered previously in this section.

\begin{proposition}
\label{PropMajClust}
Let $x,y\in V$. Let
\begin{displaymath}
g(x,y):=\dfrac{G(x,y)}{\sqrt{G(x,x)G(y,y)}}.
\end{displaymath}
\begin{multline}
\label{SecPerco: EqBound}
\mathbb{P}\left(x~and~y~belong~to~the~same~cluster~of~
\mathcal{L}_{1/2}\right)\leq\\
\mathbb{P}\left(x~and~y~belong~to~the~same~cluster~of~
\widetilde{\mathcal{L}}_{1/2}\right)=
\dfrac{2}{\pi}
\arcsin(g(x,y)).
\end{multline}
\end{proposition}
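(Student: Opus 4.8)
The plan is to obtain the inequality for free from the natural coupling of $\mathcal{L}_{\frac12}$ and $\widetilde{\mathcal{L}}_{\frac12}$, and the equality from Proposition~\ref{PropCouplingCont} together with Lemma~\ref{LemSignGFF} and the classical orthant probability for a plane Gaussian vector.

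\emph{The inequality.} I would realise $\mathcal{L}_{\frac12}$ and $\widetilde{\mathcal{L}}_{\frac12}$ on the same probability space by taking $\mathcal{L}_{\frac12}$ to be the print on $V$ of $\widetilde{\mathcal{L}}_{\frac12}$, as in Section~\ref{SecCouplComplex}. As noted there, each cluster of $\mathcal{L}_{\frac12}$ is contained, as a subset of $\widetilde{\mathcal{G}}$, in a cluster of $\widetilde{\mathcal{L}}_{\frac12}$. Hence under this coupling the event ``$x$ and $y$ in the same cluster of $\mathcal{L}_{\frac12}$'' is contained in the event ``$x$ and $y$ in the same cluster of $\widetilde{\mathcal{L}}_{\frac12}$'', which gives the first line of \eqref{SecPerco: EqBound}.

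\emph{The equality.} I work in the coupling of Proposition~\ref{PropCouplingCont}, where $\widehat{\mathcal{L}}^{y}_{\frac12}=\tfrac12\phi_y^2$ and, by Lemma~\ref{LemClustPosComp}, the clusters of $\widetilde{\mathcal{L}}_{\frac12}$ are exactly the sign clusters of $(\phi_y)_{y\in\widetilde{\mathcal{G}}}$. So for $x,y\in V$ the probability in question is $\mathbb{P}(x\leftrightarrow y)$, where $x\leftrightarrow y$ means $x$ and $y$ lie in the same connected component of $\{z\in\widetilde{\mathcal{G}}:\phi_z\neq0\}$. Almost surely $\phi_x,\phi_y\neq0$ and $\phi$ has constant sign on each such component, so $\{x\leftrightarrow y\}\subseteq\{\phi_x\phi_y>0\}$ and on $\{x\not\leftrightarrow y\}$ the points $x$ and $y$ sit in two distinct sign clusters. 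Now condition on $(|\phi_y|)_{y\in\widetilde{\mathcal{G}}}$: this determines the sign clusters, hence the event $\{x\leftrightarrow y\}$, and by Lemma~\ref{LemSignGFF} the signs of the distinct clusters are i.i.d.\ uniform on $\{-1,+1\}$. Therefore, conditionally, on $\{x\not\leftrightarrow y\}$ one has $\mathbb{P}(\phi_x\phi_y>0)=\mathbb{P}(\phi_x\phi_y<0)=\tfrac12$, while $\phi_x\phi_y>0$ surely on $\{x\leftrightarrow y\}$; taking expectations,
\begin{displaymath}
\mathbb{P}(x\leftrightarrow y)=\mathbb{P}(\phi_x\phi_y>0)-\mathbb{P}(\phi_x\phi_y<0).
\end{displaymath}

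\emph{Conclusion.} Since $(\phi_x,\phi_y)$ is a centred Gaussian vector with correlation $g(x,y)\in[0,1]$ (Cauchy--Schwarz for covariances), the standard orthant formula gives $\mathbb{P}(\phi_x\phi_y>0)=\tfrac12+\tfrac1\pi\arcsin g(x,y)$ and $\mathbb{P}(\phi_x\phi_y<0)=\tfrac12-\tfrac1\pi\arcsin g(x,y)$, whence $\mathbb{P}(x\leftrightarrow y)=\tfrac2\pi\arcsin g(x,y)$, which by Proposition~\ref{PropCouplingCont} is the right-hand side of \eqref{SecPerco: EqBound}. The substantive inputs, Proposition~\ref{PropCouplingCont} and Lemma~\ref{LemSignGFF}, are already available, and the only genuine computation is the elementary orthant probability; so I do not expect any serious obstacle. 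The one point that needs care is that conditioning on $|\phi|$ fixes simultaneously the partition of $\widetilde{\mathcal{G}}$ into sign clusters and the event $\{x\leftrightarrow y\}$, which is precisely what allows Lemma~\ref{LemSignGFF} to be applied to the clusters containing $x$ and $y$.
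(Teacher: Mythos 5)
Your proposal is correct and follows essentially the same route as the paper: under the metric-graph coupling the event of being in the same cluster of $\widetilde{\mathcal{L}}_{\frac{1}{2}}$ is identified with being in the same sign cluster of $\phi$, conditioning on $\vert\phi\vert$ with independent uniform signs gives $\mathbb{P}(\text{same cluster})=\mathbb{E}\left[\operatorname{sign}(\phi_{x})\operatorname{sign}(\phi_{y})\right]$, and the arcsine law for the sign correlation of a bivariate Gaussian yields $\frac{2}{\pi}\arcsin(g(x,y))$. The only cosmetic difference is that you quote the classical orthant formula where the paper derives it via the Cauchy distribution of a ratio of Gaussians, which is the same computation.
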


\begin{proof}
Consider the set of extended clusters $\mathfrak{C}'$. The probability that $x$ and $y$ belong to the same cluster in $\mathfrak{C}'$ is exactly
\begin{displaymath}
\mathbb{E}\left[\operatorname{sign}(\phi_{x})\operatorname{sign}(\phi_{y})\right].
\end{displaymath}
In our coupling if $x$ and $y$ belong to the same cluster in $\mathfrak{C}'$ then the product 
$\operatorname{sign}(\phi_{x})\operatorname{sign}(\phi_{y})$ equals $1$, and if this is not the case 
$\operatorname{sign}(\phi_{x})\operatorname{sign}(\phi_{y})$ equals either $1$ or $-1$ each with probability $\frac{1}{2}$.

One must to check that
\begin{displaymath}
\mathbb{E}\left[\operatorname{sign}(\phi_{x})\operatorname{sign}(\phi_{y})\right]=\dfrac{2}{\pi}
\arcsin(g(x,y)).
\end{displaymath}
Let $Z_{1}$ and $Z_{2}$ be two independent standard centred Gaussian r.v.'s. We have the equalities in law
\begin{displaymath}
(\phi_{x},\phi_{y})\stackrel{(\text{law})}{=}
(\sqrt{G(x,x)}Z_{1},\sqrt{G(y,y)}
(g(x,y)Z_{1}+\sqrt{1-g(x,y)^{2}}Z_{2})),
\end{displaymath}
\begin{displaymath}
(\operatorname{sign}(\phi_{x}),\operatorname{sign}(\phi_{y}))\stackrel{(\text{law})}{=}
(\operatorname{sign}(Z_{1}),\operatorname{sign}(g(x,y)Z_{1}+\sqrt{1-g(x,y)^{2}}Z_{2})).
\end{displaymath}
Then
\begin{displaymath}
\mathbb{E}\left[\operatorname{sign}(\phi_{x})\operatorname{sign}(\phi_{y})\right]=
\mathbb{P}\left(\dfrac{\vert Z_{2}\vert}{\vert Z_{1}\vert}\leq
\dfrac{g(x,y)}{\sqrt{1-g(x,y)^{2}}}\right).
\end{displaymath}
$\dfrac{Z_{2}}{Z_{1}}$ follows the Cauchy distribution
\begin{displaymath}
\dfrac{1}{\pi}\dfrac{dz}{1+z^{2}}.
\end{displaymath}
Thus
\begin{displaymath}
\mathbb{P}\left(\dfrac{\vert Z_{2}\vert}{\vert Z_{1}\vert}\leq
\dfrac{g(x,y)}{\sqrt{1-g(x,y)^{2}}}\right)=\dfrac{2}{\pi}
\arctan\left(\dfrac{g(x,y)}{\sqrt{1-g(x,y)^{2}}}\right)=\dfrac{2}{\pi}\arcsin(g(x,y)).
\qedhere
\end{displaymath}
\end{proof}

In the case of a graph $\mathbb{Z}^{d}$ ($d\geq 2$) with positive constant killing measure, inequality \eqref{SecPerco: EqBound} ensures an exponential decay of cluster size distribution. For $\mathbb{Z}^{d}$ ($d\geq 3$) with no killing inequality 
\eqref{SecPerco: EqBound} implies
\begin{displaymath}
\mathbb{P}\left(x~\text{and}~y~\text{belong to the same cluster of}~
\mathcal{L}_{1/2}\right)=
O\left(\dfrac{1}{\vert y-x\vert^{d-2}}\right).
\end{displaymath}
However this bound is certainly not sharp and one expects that for $d\geq 5$,
\begin{displaymath}
\mathbb{P}\left(x~\text{and}~y~\text{belong to the same cluster of}~
\mathcal{L}_{1/2}\right)=
O\left(\dfrac{1}{\vert y-x\vert^{2(d-2})}\right);
\end{displaymath}
see Proposition $5.3$ in \cite{ChangSapozhnikov2014PercLoops}. This also means that the percolation by discrete loops on periodic lattices and the percolation by continuous loops on the corresponding metric graphs behave differently.

\begin{proof}[Proof of Theorem \ref{ThmPerco}]
Assume that $\mathcal{L}_{1/2}$ has an infinite cluster. Let $\mathcal{C}_{\infty}$ be this infinite cluster. Let $x$ be a vertex and
\begin{displaymath}
\theta(x):=\mathbb{P}(x\in\mathcal{C}_{\infty}).
\end{displaymath}
Let $u_{1}$ be the unit vector corresponding to the first coordinate
\begin{displaymath}
u_{1}=(1,0,\dots,0). 
\end{displaymath}
Let $x_{n}:=x+nu_{1}$. From the invariance under translation by $u_{1}$ it follows that $\theta(x_{n})=\theta(x)$.
\begin{displaymath}
\mathbb{P}\left(x~\text{and}~x_{n}~\text{belong to the same cluster of}~
\mathcal{L}_{1/2}\right)\geq
\mathbb{P}(x\in\mathcal{C}_{\infty},x_{n}\in\mathcal{C}_{\infty}).
\end{displaymath}
$\mathcal{L}_{1/2}$ satisfies Harris-FKG inequality (\cite{LeJanLemaire2012LoopClusters}). Thus
\begin{displaymath}
\mathbb{P}(x\in\mathcal{C}_{\infty},x_{n}\in\mathcal{C}_{\infty})\geq
\theta(x)\theta(x_{n})=\theta(x)^{2}.
\end{displaymath}
It follows that
\begin{displaymath}
\theta(x)^{2}\leq\dfrac{2}{\pi}\arcsin(g(x,x_{n})).
\end{displaymath}
Letting $n$ go to $+\infty$ we get that $\theta(x)=0$.
\end{proof}

Let $d\geq 3$. Let $\widetilde{\mathbb{Z}}^{d}$ be the metric graph associated to the graph $\mathbb{Z}^{d}$. All edges have length $\frac{1}{2}$. We consider the Gaussian free field $(\phi_{z})_{z\in\widetilde{\mathbb{Z}}^{d}}$ on $\widetilde{\mathbb{Z}}^{d}$ and the following dependent percolation model on the edges of $\mathbb{Z}^{d}$: Let $\omega$ be the random configuration on the edges of $\mathbb{Z}^{d}$ with $\omega_{e}=1$ ($e$ is open) if $\vert\phi\vert$ has no zeros on $I_{e}$ and $\omega_{e}=0$ ($e$ is closed) otherwise. The set of clusters of $\omega$ is exactly $\mathfrak{C}'$ which appears in the coupling of Theorem \ref{ThmAltDescCoupl} bis. The free field on the metric graph has an unbounded sign cluster if an only if there is an infinite cluster in $\mathfrak{C}'$, as the sign clusters of $\phi$ that are contained inside the intervals $I_{e}$ corresponding to the edges are all bounded.
We will show that this cannot happen. We will follow the same pattern as for the proof of Theorem \ref{ThmPerco}: first show that $\mathfrak{C}'$ can contain at most one infinite cluster, the show that $\omega$ satisfies the Harris-FKG inequality and conclude using inequality \eqref{SecPerco: EqBound}.

\begin{lemma}
\label{LemUniqGFFCable}
Let $d\geq 3$. We consider the metric graph loops $\widetilde{\mathcal{L}}_{1/2}$ on the metric graph $\widetilde{\mathbb{Z}}^{d}$ associated to $\mathbb{Z}^{d}$. $\mathfrak{C}'$ is the trace on the vertices and edges of $\mathbb{Z}^{d}$ of the clusters of $\widetilde{\mathcal{L}}_{1/2}$. With probability one $\mathfrak{C}'$ has at most one infinite cluster.
\end{lemma}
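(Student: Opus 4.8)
The plan is to prove the statement by running the Burton--Keane uniqueness argument on the dependent edge percolation $\omega$ introduced just above, where $\omega_e=1$ exactly when $\vert\phi\vert$ has no zero on $I_e$. First I would record two soft facts. By Proposition~\ref{PropCouplingCont} and Lemma~\ref{LemClustPosComp}, the clusters of $\widetilde{\mathcal{L}}_{\frac{1}{2}}$ on $\widetilde{\mathbb{Z}}^d$ are precisely the positive components of $(\vert\phi_z\vert)_{z\in\widetilde{\mathbb{Z}}^d}$, so $\mathfrak{C}'$ is the trace of the sign clusters of $\phi$ on the vertices and edges of $\mathbb{Z}^d$, and (as observed in the paragraph preceding the statement) $\mathfrak{C}'$ has an infinite cluster if and only if $\phi$ has an unbounded sign cluster on $\widetilde{\mathbb{Z}}^d$. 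Moreover the free field on $\widetilde{\mathbb{Z}}^d$, equivalently $\widetilde{\mathcal{L}}_{\frac{1}{2}}$, is invariant and ergodic --- in fact mixing, since its covariance $G$ tends to $0$ at infinity --- under the lattice translations of $\mathbb{Z}^d$, and $\omega$ is a translation-equivariant factor of it; hence the number $N_\infty$ of infinite clusters of $\mathfrak{C}'$ is almost surely a deterministic constant.

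The one genuinely nontrivial ingredient, and the step I expect to be the main obstacle, is the finite-energy (insertion and deletion tolerance) property of $\omega$: for every edge $e=\lbrace x,y\rbrace$,
\begin{displaymath}
0<\mathbb{P}\bigl(\omega_e=1\ \big\vert\ \sigma(\omega_{e'}:e'\neq e)\bigr)<1\qquad\text{almost surely.}
\end{displaymath}
I would prove this using the Markov property of the metric graph free field on the open interval $I_e$: conditionally on $(\phi_z)_{z\notin I_e}$, and a fortiori conditionally on $(\omega_{e'})_{e'\neq e}$, the restriction of $\phi$ to $I_e$ is a Brownian bridge of variance $2$ at time $1$ and length $\rho(e)=\frac{1}{2}$ joining $\phi_x$ to $\phi_y$. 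Such a bridge vanishes somewhere on $(0,\rho(e))$ with positive probability in every case, which gives $\mathbb{P}(\omega_e=0\mid\cdot)>0$; and when $\phi_x\phi_y>0$ it avoids $0$ with positive probability, which gives $\mathbb{P}(\omega_e=1\mid\cdot)>0$ \emph{provided} that, conditionally on $(\omega_{e'})_{e'\neq e}$, one still has $\phi_x\phi_y>0$ with positive probability. This provision is the delicate point, and it is here that Lemma~\ref{LemSignGFF} enters: if $x$ and $y$ are already connected in $\omega_{E\setminus\lbrace e\rbrace}$ then $\phi_x$ and $\phi_y$ automatically have the same sign, while otherwise they lie in two distinct sign components, whose signs are conditionally independent and uniform in $\lbrace -1,+1\rbrace$, so that $\phi_x\phi_y>0$ with conditional probability $\frac{1}{2}$.

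Granting translation invariance, ergodicity and finite energy, the Burton--Keane argument then proceeds exactly as in the proof of Proposition~\ref{PropUniqHalfPlane}, and as in \cite{ChangSapozhnikov2014PercLoops} for the discrete loop clusters on $\mathbb{Z}^d$ (see also \cite{Grimmett1999Percolation}, section~$8.2$). Writing $\Lambda_n:=[-n,n]^d\cap\mathbb{Z}^d$, one first rules out $2\leq N_\infty<\infty$: choosing $n$ large so that with probability close to $1$ all infinite clusters meet $\Lambda_n$, one opens finitely many edges inside $\Lambda_n$ to merge them into a single infinite cluster, an event of positive conditional probability by finite energy, which contradicts the a.s.\ constancy of $N_\infty$. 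Then one rules out $N_\infty=\infty$: the same local surgery creates, with positive probability, a trifurcation of $\omega$ at a prescribed vertex, so the expected number of trifurcations in $\Lambda_n$ grows like $n^d$, whereas the Burton--Keane tree bound on each infinite cluster forces that number to be at most $\vert\partial\Lambda_n\vert=O(n^{d-1})$, a contradiction. Hence $N_\infty\leq 1$. An alternative that bypasses the finite-energy estimate would be to run Burton--Keane directly on the continuous loop clusters $\widetilde{\mathfrak{C}}_{\frac{1}{2}}$, exploiting the finite range of loops and adapting the device of the families $\mathcal{L}_{\frac{1}{2}}^{>a}$ from the proof of Proposition~\ref{PropUniqHalfPlane} to the full lattice $\mathbb{Z}^d$; the route through $\omega$ and finite energy seems, however, the most economical.
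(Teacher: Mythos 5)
Your proposal is correct and follows the same top-level strategy as the paper: reduce the lemma to a uniqueness statement for the translation-invariant edge process $\omega$ and verify a finite-energy condition. The differences are in the two ingredients. First, the paper does not redo Burton--Keane by hand: it invokes Theorem~$1$ of \cite{GandolfiKeaneNewman1992UniqInfClust}, which gives uniqueness from translation invariance plus \emph{positive} finite energy alone, so only the insertion-tolerance half $\mathbb{P}(\omega_{e}=1\mid\sigma(\omega_{f},f\neq e))>0$ is needed (your deletion-tolerance bound is harmless but superfluous, and your hand-run of the trifurcation argument is essentially a rederivation of what that citation buys). Second, and more substantively, the finite-energy step is established by a different mechanism. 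The paper conditions only on the \emph{occupation field} $\big(\frac{1}{2}\phi_{z}^{2}\big)_{z\in\widetilde{\mathbb{Z}}^{d}\setminus I_{e}}$, which already determines all $\omega_{f}$, $f\neq e$, and uses the loop/excursion decomposition of Section~\ref{SecAltDesc} (Corollary~\ref{CorProbAbsEdge}) to get the explicit bound $\mathbb{P}\big(\omega_{e}=1\mid(\tfrac{1}{2}\phi_{z}^{2})_{z\notin I_{e}}\big)\geq 1-e^{-\vert\phi_{x}\phi_{y}\vert}>0$ a.s.; no discussion of signs is required. You instead condition on the \emph{signed} field off $I_{e}$, where the conditional probability genuinely vanishes on $\lbrace\phi_{x}\phi_{y}<0\rbrace$, and then repair this with Lemma~\ref{LemSignGFF}. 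That route is valid, but one sentence in it is stated too loosely: if $x$ and $y$ are not joined by open edges other than $e$, they need not lie in two distinct sign components of $\phi$ on $\widetilde{\mathbb{Z}}^{d}$, since they may still be joined through $I_{e}$ itself. The clean fix is to condition on $\vert\phi\vert$ (which determines every $\omega_{f}$): by Lemma~\ref{LemSignGFF}, a.s. $\mathbb{P}(\phi_{x}\phi_{y}>0\mid\vert\phi\vert)$ equals $1$ if $x,y$ lie in the same positive component and $\tfrac{1}{2}$ otherwise, hence is $\geq\tfrac{1}{2}$, and the tower property gives $\mathbb{P}(\phi_{x}\phi_{y}>0\mid\sigma(\omega_{f},f\neq e))\geq\tfrac{1}{2}$; combined with your Brownian-bridge observation via the Markov property on $I_{e}$ this yields insertion tolerance. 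With that small patch your argument is complete; the paper's version is a bit slicker in that the occupation-field conditioning sidesteps the sign issue entirely, while yours avoids the excursion computation of Corollary~\ref{CorProbAbsEdge} and uses only the Markov property of the metric-graph field together with the sign-randomization lemma.
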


\begin{proof}
According to Theorem $1$ in \cite{GandolfiKeaneNewman1992UniqInfClust}, the uniqueness of the infinite clusters is implied by translation invariance and positive finite energy property. We need only show the finite energy property,
\begin{equation}
\label{SecPerco: EqPosFiniteNRJ}
\mathbb{P}(\omega_{e}=1\vert (\omega_{f},~f~\text{is an edge of}~\mathbb{Z}^{d}~\text{and}~f\neq e))>0~\text{a.s.}
\end{equation}
Let $e=\lbrace x,y\rbrace$ be an edge. We see $(\frac{1}{2}\phi_{z}^{2})_{z\in\widetilde{\mathbb{Z}}^{d}}$ as the occupation field of continuous loops $\widetilde{\mathcal{L}}_{1/2}$. The loops inside $I_{e}$ and the excursions inside $I_{e}$ from $x$ to $x$ and $y$ to $y$ that do not cross entirely $I_{e}$ are independent of  $(\frac{1}{2}\phi_{z}^{2})_{z\in\widetilde{\mathbb{Z}}^{d}\setminus I_{e}}$ conditional on $\vert\phi_{x}\vert$ and $\vert\phi_{y}\vert$. Thus, according to the computations made in Section \ref{SecAltDesc},
\begin{displaymath}
\mathbb{P}\Big(\omega_{e}=1\Big\vert \Big(\frac{1}{2}\phi_{z}^{2}\Big)_{z\in\widetilde{\mathbb{Z}}^{d}\setminus I_{e}}\Big)\geq 1-e^{-\vert \phi_{x}\phi_{y}\vert}.
\end{displaymath}
Hence
\begin{multline}
\label{SecPerco: EqMajCondProb}
\mathbb{P}(\omega_{e}=1\vert (\omega_{f},~f~\text{is an edge of}~\mathbb{Z}^{d}~\text{and}~f\neq e))\geq\\
\mathbb{E}\left[1-e^{-\vert \phi_{x}\phi_{y}\vert}\vert (\omega_{f},~f~\text{is an edge of}~\mathbb{Z}^{d}~
\text{and}~f\neq e)\right].
\end{multline}
Since $\vert \phi_{x}\phi_{y}\vert>0$ a.s., the right-hand side in \eqref{SecPerco: EqMajCondProb} is a.s. non-zero and the condition \eqref{SecPerco: EqPosFiniteNRJ} is satisfied.
\end{proof}

\begin{lemma}
\label{LemCableFKG}
Le random configuration $\omega$ on edges of $\mathbb{Z}^{d}$ satisfies the Harris-FKG inequality: given $A_{1}(\omega)$ and $A_{2}(\omega)$ two increasing events
\begin{equation}
\label{SecPerco: EqFKG}
\mathbb{P}(A_{1}(\omega),A_{2}(\omega))\geq\mathbb{P}(A_{1}(\omega))
\mathbb{P}(A_{2}(\omega)) 
\end{equation}
\end{lemma}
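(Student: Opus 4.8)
The plan is to recognise $\omega$ as a \emph{monotone} functional of the metric‑graph loop ensemble $\widetilde{\mathcal{L}}_{\frac12}$ and then to invoke the Harris–FKG inequality for Poisson point processes. Since the law of $\omega$ depends only on the law of the metric‑graph free field $(\phi_z)_{z\in\widetilde{\mathbb{Z}}^d}$, I may compute it inside any convenient coupling, and I will use the one of Proposition~\ref{PropCouplingCont}: under it $\frac12\phi_z^2=\widehat{\mathcal{L}}^z_{\frac12}$ for all $z$, where $\widehat{\mathcal{L}}_{\frac12}$ is a continuous version of the occupation field of $\widetilde{\mathcal{L}}_{\frac12}$. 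In this coupling $\omega_e=1$ precisely when $\widehat{\mathcal{L}}_{\frac12}$ has no zero on $I_e$.

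The key structural step uses Lemma~\ref{LemClustPosComp}, more precisely the almost sure properties (i) and (ii) from its proof, which say that the zero set of $\widehat{\mathcal{L}}_{\frac12}$ equals the set of points of $\widetilde{\mathbb{Z}}^d$ not covered by the range of any loop of $\widetilde{\mathcal{L}}_{\frac12}$. Hence, almost surely,
\begin{displaymath}
\omega_e=1\quad\Longleftrightarrow\quad I_e\subseteq\bigcup_{\tilde\gamma\in\widetilde{\mathcal{L}}_{\frac12}}Range(\tilde\gamma).
\end{displaymath}
The event on the right is non‑decreasing under the addition of loops: if $\widetilde{\mathcal{L}}\subseteq\widetilde{\mathcal{L}}'$ as point configurations of loops then $\omega_e(\widetilde{\mathcal{L}})\le\omega_e(\widetilde{\mathcal{L}}')$ for every edge $e$. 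Therefore $\omega$ is a non‑decreasing function of the point configuration $\widetilde{\mathcal{L}}_{\frac12}$, and for any increasing event $A(\omega)$ the indicator $1_{A(\omega)}$ is a bounded non‑decreasing functional of $\widetilde{\mathcal{L}}_{\frac12}$.

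It then remains to apply the Harris–FKG inequality for Poisson point processes: bounded non‑decreasing functionals of a Poisson point process with $\sigma$‑finite intensity are positively correlated. This applies to $\widetilde{\mathcal{L}}_{\frac12}$, whose intensity $\frac12\tilde\mu$ is $\sigma$‑finite, by exactly the argument already used for the discrete loop soup in \cite{LeJanLemaire2012LoopClusters}; taking the two functionals to be $1_{A_1(\omega)}$ and $1_{A_2(\omega)}$ yields \eqref{SecPerco: EqFKG}. The only point that genuinely relies on the earlier results is the identification of $\{\omega_e=1\}$ with the manifestly monotone event ``$I_e$ is covered by the loop ranges'', via Lemma~\ref{LemClustPosComp}; the rest is routine, so I do not expect a serious obstacle here. (An alternative route avoiding the loop ensemble would condition on the vertex values $(|\phi_v|)_{v\in V}$: given these the $\omega_e$ are conditionally independent with success probabilities that are non‑decreasing in $|\phi_x|$ and $|\phi_y|$, see Corollary~\ref{CorProbAbsEdge} and \eqref{EqNiceExpr}, and one concludes from the positive association of $(|\phi_v|)_{v\in V}$; but that association itself rests on the same loop‑soup monotonicity, so the direct route above is shorter.)
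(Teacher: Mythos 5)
Your proof is correct and follows essentially the same route as the paper: view $\bigl(\tfrac12\phi_y^2\bigr)_{y\in\widetilde{\mathbb{Z}}^d}$ as the occupation field of the metric-graph loop soup $\widetilde{\mathcal{L}}_{\frac12}$, observe that increasing events in $\omega$ become events stable under adding loops, and conclude by the Harris--FKG inequality for Poisson point processes (the paper cites lemma $2.1$ of \cite{Janson1984FKGPPP}). The only difference is that you spell out, via the almost sure properties in the proof of Lemma \ref{LemClustPosComp}, why $\lbrace\omega_e=1\rbrace$ coincides with the manifestly monotone covering event $I_e\subseteq\bigcup_{\tilde\gamma}Range(\tilde\gamma)$ — a detail the paper leaves implicit.
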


\begin{proof}
We see the field $(\frac{1}{2}\phi_{y}^{2})_{y\in\widetilde{\mathbb{Z}}^{d}}$ as the occupation field of the metric graph loops $\widetilde{\mathcal{L}}_{1/2}$. If the events $A_{1}(\omega)$ and $A_{2}(\omega)$ are increasing in the sense that opening more edges in $\omega$ only helps their occurrence, then these events are also increasing in the sense that they are stable by adding more loops to $\widetilde{\mathcal{L}}_{1/2}$. The inequality \eqref{SecPerco: EqFKG} follows from the FKG inequality for Poisson point processes (Lemma $2.1$ in \cite{Janson1984FKGPPP}).
\end{proof}

\begin{proposition}
\label{PropFiniteClusters}
Let $d\geq 3$. We consider the metric graph loops $\widetilde{\mathcal{L}}_{1/2}$ on the metric graph $\widetilde{\mathbb{Z}}^{d}$ associated to $\mathbb{Z}^{d}$. $\mathfrak{C}'$ is the trace on the vertices and edges of $\mathbb{Z}^{d}$ of the clusters of $\widetilde{\mathcal{L}}_{1/2}$. With probability one $\mathfrak{C}'$ has only finite clusters.
\end{proposition}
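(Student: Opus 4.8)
The plan is to transcribe the proof of Theorem \ref{ThmPerco}, now applied to the dependent edge percolation $\omega$ on $\mathbb{Z}^{d}$ whose open clusters are exactly the clusters of $\mathfrak{C}'$. I would argue by contradiction: assume that with positive probability $\mathfrak{C}'$ contains an infinite cluster. By Lemma \ref{LemUniqGFFCable} this infinite cluster is a.s.\ unique, so I may denote it $\mathcal{C}'_{\infty}$ and note that, up to a null event, $\{x\in\mathcal{C}'_{\infty}\}$ coincides with the increasing event ``$x$ lies in some infinite cluster of $\omega$''. Set $\theta(x):=\mathbb{P}(x\in\mathcal{C}'_{\infty})$ for $x\in\mathbb{Z}^{d}$. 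Since $\widetilde{\mathcal{L}}_{\frac{1}{2}}$ on $\widetilde{\mathbb{Z}}^{d}$ is invariant under the translations of $\mathbb{Z}^{d}$, $\theta$ does not depend on $x$, and by assumption $\theta>0$.

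Next I would fix a vertex $x$, put $u_{1}=(1,0,\dots,0)$ and $x_{n}:=x+nu_{1}$, and apply the Harris--FKG inequality of Lemma \ref{LemCableFKG} to the increasing events $\{x\in\mathcal{C}'_{\infty}\}$ and $\{x_{n}\in\mathcal{C}'_{\infty}\}$:
\begin{displaymath}
\mathbb{P}(x\in\mathcal{C}'_{\infty},\,x_{n}\in\mathcal{C}'_{\infty})\geq\theta(x)\,\theta(x_{n})=\theta^{2}.
\end{displaymath}
On the left-hand event $x$ and $x_{n}$ lie in a common cluster of $\mathfrak{C}'$; moreover any two vertices in a common cluster of $\mathfrak{C}'$ lie in a common cluster of $\widetilde{\mathcal{L}}_{\frac{1}{2}}$, since $\mathfrak{C}'$ is the trace on $V$ of the clusters of $\widetilde{\mathcal{L}}_{\frac{1}{2}}$ (each $\omega$-open edge has its two endpoints inside one positive component of the occupation field by Lemma \ref{LemClustPosComp}, and each discrete-space loop of $\mathcal{L}_{\frac{1}{2}}$ sits inside a loop of $\widetilde{\mathcal{L}}_{\frac{1}{2}}$). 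Hence, by the two-point bound \eqref{SecPerco: EqBound},
\begin{displaymath}
\theta^{2}\leq\mathbb{P}\big(x\text{ and }x_{n}\text{ are in the same cluster of }\widetilde{\mathcal{L}}_{\frac{1}{2}}\big)=\frac{2}{\pi}\arcsin\big(g(x,x_{n})\big).
\end{displaymath}

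Finally, since $d\geq 3$ the walk on $\mathbb{Z}^{d}$ is transient, so $G(x,x_{n})\to 0$ as $n\to\infty$, and therefore $g(x,x_{n})=G(x,x_{n})/G(0,0)\to 0$; letting $n\to\infty$ forces $\theta=0$, a contradiction. Thus a.s.\ $\mathfrak{C}'$ has no infinite cluster, and since every cluster of $\mathfrak{C}'$ contains a vertex one gets $\mathbb{P}(\mathfrak{C}'\text{ has an infinite cluster})\leq\sum_{x}\theta(x)=0$, i.e.\ all clusters of $\mathfrak{C}'$ are finite a.s. I do not expect a genuine obstacle here: the argument is a line-by-line copy of the one for Theorem \ref{ThmPerco}, with Lemmas \ref{LemUniqGFFCable} and \ref{LemCableFKG} providing the only nontrivial ingredients (uniqueness, so that membership in the infinite cluster is an increasing event, and Harris--FKG for $\omega$). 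The two points deserving a line of care are that $\mathfrak{C}'$-connectivity is dominated by $\widetilde{\mathcal{L}}_{\frac{1}{2}}$-connectivity — immediate from the description of $\mathfrak{C}'$ in Theorem \ref{ThmAltDescCoupl} bis together with Lemma \ref{LemClustPosComp} — and the classical decay $G(x,x_{n})\to 0$ for transient walks on $\mathbb{Z}^{d}$, $d\geq 3$.
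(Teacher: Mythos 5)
Your proof is correct and follows exactly the route the paper intends: uniqueness of the infinite cluster (Lemma \ref{LemUniqGFFCable}), the Harris--FKG inequality for $\omega$ (Lemma \ref{LemCableFKG}), and the two-point bound \eqref{SecPerco: EqBound} combined with $G(x,x_{n})\to 0$ in transient dimension, mirroring the proof of Theorem \ref{ThmPerco}. The paper leaves this assembly implicit after announcing the strategy, and your write-up supplies precisely that argument, including the (correct) observation that connectivity in $\mathfrak{C}'$ is controlled by connectivity of the clusters of $\widetilde{\mathcal{L}}_{\frac{1}{2}}$.
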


\section{Random interlacements and level sets of the Gaussian free field}
\label{SecInterlacement}

Let $d\geq 3$. As in Section \ref{SecCouplComplex} we consider the metric graph $\widetilde{\mathbb{Z}}^{d}$ associated to the graph $\mathbb{Z}^{d}$. All edges have length $\frac{1}{2}$. We construct a continuous version $\widetilde{\mathcal{I}}^{u}$ of the random interlacement of level $u$ on the metric graph $\widetilde{\mathbb{Z}}^{d}$.
First we sample $\mathcal{I}^{u}$. Given a path $w$ in $\mathcal{I}^{u}$ we replace each jump from a vertex to its neighbour by a Brownian excursion inside the linking edge and we add Brownian excursions from a vertex visited by $w$ to itself inside adjacent edges such that the local time on the vertex equals the time $w$ spends in it (as in \eqref{SecCouplComplex: EqIntensity} for loops). By construction $\mathcal{I}^{u}$ is the restriction of $\widetilde{\mathcal{I}}^{u}$ to the vertices. $\widetilde{\mathcal{I}}^{u}$ has an occupation field 
$(L^{y}(\widetilde{\mathcal{I}}^{u}))_{y\in\widetilde{\mathbb{Z}}^{d}}$ which is continuous (because the occupation field of the Brownian excursions is) and its restriction to the vertices is $(L^{x}(\mathcal{I}^{u}))_{x\in\mathbb{Z}^{d}}$. We will show that the isomorphism \eqref{SecIntro: EqIsoSz} also holds in the continuous setting on $\widetilde{\mathbb{Z}}^{d}$. To this end we will use the approximation scheme of random interlacement by excursions that appeared in \cite{Sznitman2012Isomorphism}.

Let $K$ be a finite subset of $\mathbb{Z}^{d}$. Let $\mathcal{I}_{K}^{u}$ be the set of trajectories in $\mathcal{I}^{u}$ that visit $K$. Given such a trajectory $w$ we will denote by $(w^{K}(t))_{t\geq 0}$ the trajectory obtained by setting the origin of times at the entrance time of $w$ in $K$ and running $w$ onward from this time. Conditional on $w^{K}_{0}$, $(w^{K}(t))_{t\geq 0}$ is a Markov jump process on $\mathbb{Z}^{d}$.

Let $\mathcal{G}_{n}$ be the (discrete) graph obtained from the subgraph $[-n,n]^{d}$ of $\mathbb{Z}^{d}$ by identifying to one vertex $x_{\ast}$ the boundary of $[-n,n]^{d}$, that is the vertices
\begin{displaymath}
\lbrace (x_{1},x_{2},\dots,x_{d})\in[-n,n]^{d}\vert \exists ! i\in\lbrace 1,\dots,d\rbrace, 
\vert x_{i}\vert=n\rbrace.
\end{displaymath}
Between any two distinct adjacent vertices in $\mathcal{G}_{n}$ the conductance is $1$. Let $(X^{n}_{t})_{t\geq 0}$ be the recurrent Markov jump process on $\mathcal{G}_{n}$ starting from $x_{\ast}$. $X^{n}$ jumps away from $x_{\ast}$ with rate $2d(2n-1)^{d-1}$. Let
\begin{displaymath}
\tau^{n}_{u}:=\inf\bigg\lbrace t\geq 0\Big\vert\int_{0}^{t} 1_{X^{n}_{s}=x_{\ast}}ds=u\bigg\rbrace .
\end{displaymath}
There are two sequences $(D_{j}^{n})_{j\geq 1}$ and $(R_{j}^{n})_{j\geq 1}$ with
\begin{displaymath}
0<D_{1}^{n}<R_{1}^{n}<D_{2}^{n}<R_{2}^{n}<\dots<D_{j}^{n}<R_{j}^{n}<\cdots
\end{displaymath}
of successive departure and return times of $X^{n}$ from and to $x_{\ast}$. By convention we set $R_{0}^{n}=0$. $X^{n}$ is outside $x_{\ast}$ on time intervals $[D_{j}^{n},R_{j}^{n})$ and in $x_{\ast}$ on intervals $[R_{j-1}^{n},D_{j}^{n})$. Let
\begin{displaymath}
j_{u}^{n}:=\max\lbrace j\geq 0\vert R_{j}^{n}<\tau^{n}_{u}\rbrace.
\end{displaymath}
Let $K$ be a subset of $[-(n-1),n-1]^{d}$. Let
\begin{displaymath}
J_{u,K}^{n}:=\lbrace j\in\lbrace 1,\dots,j_{u}^{n}\rbrace\vert X^{n}~visits~K~on~[D_{j}^{n},R_{j}^{n})\rbrace.
\end{displaymath}
For $j\in J_{u,K}^{n}$ we define the stopping time $T_{K,j}^{n}$:
\begin{displaymath}
T_{K,j}^{n}:=\inf\lbrace t\in [D_{j}^{n},R_{j}^{n})\vert X^{n}_{t}\in K\rbrace.
\end{displaymath}
Conditional on $j\in J_{u,K}^{n}$ and on the value of $X^{n}_{T_{K,j}^{n}}$, the trajectory
\begin{displaymath}
(X^{n}_{T_{K,j}^{n}+t})_{0\leq t\leq R_{j}^{n}-T_{K,j}^{n}}
\end{displaymath} 
is a Markov jump process on $\mathcal{G}_{n}$ run until hitting $x_{\ast}$ or equivalently a Markov jump process on $\mathbb{Z}^{d}$ run until hitting the boundary of $[-n,n]^{d}$.

The next approximation result was shown in the first proof of Theorem $2.1$ in \cite{Sznitman2012Isomorphism}:

\begin{lemma}
\label{SecInterlacement: LemApproxScheme}
Let $K$ be a finite subset of $\mathbb{Z}^{d}$. The set of points
\begin{displaymath}
\lbrace X^{n}_{T_{K,j}^{n}}\vert j\in J_{u,K}^{n}\rbrace
\end{displaymath}
converges in law as $n\rightarrow +\infty$ to
\begin{displaymath}
\lbrace w^{K}(0)\vert w\in \mathcal{I}_{K}^{u}\rbrace.
\end{displaymath}
The set of trajectories
\begin{displaymath}
\left\lbrace(X^{n}_{T_{K,j}^{n}+t})_{0\leq t\leq R_{j}^{n}-T_{K,j}^{n}}\vert j\in J_{u,K}^{n}\right\rbrace
\end{displaymath}
converges in law to the set of trajectories
\begin{displaymath}
\lbrace (w^{K}(t))_{t\geq 0}\vert w\in \mathcal{I}_{K}^{u}\rbrace.
\end{displaymath}
\end{lemma}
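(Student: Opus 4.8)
The statement is precisely the content of the first proof of Theorem~2.1 in \cite{Sznitman2012Isomorphism}, and the plan is to reproduce that argument, which realizes the excursions of $X^{n}$ away from $x_{\ast}$ as a Poisson point process and then identifies its limit as $n\to\infty$ through a thinning-and-restriction procedure. The essential ingredients are excursion theory at $x_{\ast}$, the strong Markov property of $X^{n}$, transience of $\mathbb{Z}^{d}$, and the convergence of the relevant hitting quantities to the capacity and equilibrium measure.

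First I would use that $x_{\ast}$ has constant holding rate $r_{n}=2d(2n-1)^{d-1}$ and leaves $x_{\ast}$ through a uniformly chosen adjacent edge, so that the successive holding times at $x_{\ast}$ are i.i.d. exponential of mean $r_{n}^{-1}$. Hence, indexing the excursions of $X^{n}$ away from $x_{\ast}$ by the occupation time at $x_{\ast}$, these excursions form a Poisson point process whose intensity is Lebesgue measure $d\ell$ on the interval $[0,u]$ of occupation-time values, tensored with a finite excursion measure $n^{n}$ of total mass $r_{n}$ (equivalently, $r_{n}$ times the law of one excursion of $X^{n}$ started from $x_{\ast}$). Since $\tau^{n}_{u}$ is exactly the instant the occupation time at $x_{\ast}$ reaches $u$, and $j^{n}_{u}$ counts the excursions completed before $\tau^{n}_{u}$, the family $\{(X^{n}_{D^{n}_{j}+s})_{0\le s<R^{n}_{j}-D^{n}_{j}} : 1\le j\le j^{n}_{u}\}$ coincides, up to at most one excursion whose influence is negligible as $n\to\infty$, with this Poisson point process.

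Next, keeping only the excursions that visit $K$ is a deterministic thinning of this Poisson point process, and by the strong Markov property of $X^{n}$ at the hitting time $T^{n}_{K,j}$ of $K$, each retained excursion is, conditionally on its entry point $X^{n}_{T^{n}_{K,j}}=z$, a fresh nearest-neighbour walk from $z$ run until it reaches $\partial[-n,n]^{d}$, independent of the past. Therefore the family $\{(X^{n}_{T^{n}_{K,j}+t})_{0\le t\le R^{n}_{j}-T^{n}_{K,j}} : j\in J^{n}_{u,K}\}$ is a Poisson point process whose intensity factorizes as $d\ell$ on $[0,u]$, times an entry-point measure $\mu^{n}_{K}$ on $K$, times the law of a walk started at the entry point and killed on reaching $\partial[-n,n]^{d}$.

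Finally I would pass to the limit $n\to\infty$. The killing boundary recedes to infinity and $\mathbb{Z}^{d}$, $d\ge3$, is transient, so a walk from $z$ killed on $\partial[-n,n]^{d}$ converges in law, on path space, to a nearest-neighbour walk from $z$ on $\mathbb{Z}^{d}$ run for all $t\ge0$. The only genuine computation is that $\mu^{n}_{K}$ converges to the equilibrium measure $e_{K}$; equivalently, its total mass, which is $r_{n}$ times the probability that an excursion of $X^{n}$ from $x_{\ast}$ visits $K$, converges to $cap(K)$, while the normalized entry distribution converges to $e_{K}/cap(K)$. This is where the capacity enters, and it follows from a Green's-function (last-exit) decomposition for simple random walk on $\mathbb{Z}^{d}$, carried out in \cite{Sznitman2010RandomInterlacement} and \cite{Sznitman2012Isomorphism}. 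Combining the two convergences, the Poisson point process above converges to the one of intensity $d\ell$ on $[0,u]$ tensored with $e_{K}$ tensored with the law of the nearest-neighbour walk on $\mathbb{Z}^{d}$, that is, to $u$ times the forward part of $Q_{K}$, which is by definition the law of $\{(w^{K}(t))_{t\ge0} : w\in\mathcal{I}^{u}_{K}\}$; reading off the starting points $w^{K}(0)$ yields the first convergence. The main obstacle is thus the capacity identification in this last step, but it is standard and available in the cited references.
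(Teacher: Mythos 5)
Your proposal is correct and follows essentially the same route as the paper: the paper does not reprove this lemma but simply invokes the first proof of Theorem 2.1 in Sznitman's isomorphism paper, which is exactly the excursion-decomposition argument you reproduce (Poisson point process of excursions of $X^{n}$ from $x_{\ast}$ indexed by occupation time, thinning to excursions hitting $K$, strong Markov property at $T^{n}_{K,j}$, and convergence of the normalized entry measure to $e_{K}$ with total mass $cap(K)$). The one step you defer to the references, the capacity identification, is likewise left to those references by the paper, so nothing further is required.
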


Let $\widetilde{\mathcal{G}}_{n}$ be the metric graph associated to the graph $\mathcal{G}_{n}$. Let $(B^{\widetilde{\mathcal{G}}_{n}}_{t})_{t\geq 0}$ be a Brownian motion on $\widetilde{\mathcal{G}}_{n}$ starting from $x_{\ast}$ and $(L^{y}_{t}(B^{\widetilde{\mathcal{G}}_{n}}))_{t\geq 0, y\in \widetilde{\mathcal{G}}_{n}}$ its family of local times. Let
\begin{displaymath}
\tilde{\tau}_{u}^{n}:=\inf\lbrace t\geq 0\vert L^{x_{\ast}}_{t}(B^{\widetilde{\mathcal{G}}_{n}})>u\rbrace.
\end{displaymath}
For $r\in\mathbb{N}^{\ast}$ we denote by $\widetilde{\Lambda}_{r}$ the metric graph associated to the subgraph 
$[-r,r]^{d}$ of $\mathbb{Z}^{d}$ (without identification of boundary points).

\begin{lemma}
\label{SecInterlacement: LemApproxCable}
For all $r\in\mathbb{N}^{\ast}$ the occupation field $\Big(L^{y}_{\tilde{\tau}_{u}^{n}}(B^{\widetilde{\mathcal{G}}_{n}})\Big)_{y\in \widetilde{\Lambda}_{r}}$ converges in law as $n\rightarrow +\infty$ to 
$\Big(L^{y}(\widetilde{\mathcal{I}}^{u})\Big)_{y\in \widetilde{\Lambda}_{r}}$.
\end{lemma}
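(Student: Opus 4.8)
The plan is to reduce the statement to Sznitman's approximation scheme (Lemma \ref{SecInterlacement: LemApproxScheme}) by writing both sides of the claimed convergence as one and the same deterministic functional applied to the discrete excursion data and to an independent family of Brownian decorations.

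Fix $r$ and set $K:=[-r,r]^{d}$, seen as a subgraph of $\mathbb{Z}^{d}$, so that $\widetilde{\Lambda}_{r}=\widetilde{K}$ is the union of the vertices of $K$ and the open intervals $I_{e}$ over the edges $e$ of $K$. Recall from section \ref{SecCouplComplex} that $B^{\widetilde{\mathcal{G}}_{n}}$, observed at the times where the total local time on the vertices of $\mathcal{G}_{n}$ increases, is the jump process $X^{n}$, and that $B^{\widetilde{\mathcal{G}}_{n}}$ is recovered from $X^{n}$ by the decoration procedure behind \eqref{SecCouplComplex: EqIntensity}: across each edge traversed by the skeleton one inserts an independent Brownian excursion in the corresponding interval, and at each vertex $v$ visited by the skeleton one launches into the incident edges an independent Poisson cloud of excursions from $v$ to $v$ of intensity $(\text{local time of the skeleton at }v)\times 1_{\text{height}<\rho}\,\eta_{+}$. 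The point is that every portion of $B^{\widetilde{\mathcal{G}}_{n}}$ whose occupation meets $\widetilde{\Lambda}_{r}$ has its skeleton in $K$: a portion at a vertex of $\widetilde{\Lambda}_{r}$ has the skeleton at that vertex, which lies in $K$; a portion inside $I_{e}$ with $e$ an edge of $K$ belongs either to a cross-edge excursion, so the skeleton jumps between the two endpoints of $e$, or to a hair, so the skeleton sits at an endpoint of $e$. Hence this occupation comes only from the excursions of $B^{\widetilde{\mathcal{G}}_{n}}$ away from $x_{\ast}$ whose skeleton visits $K$, and only from the part of each such excursion after its first visit to $K$. Writing $\Phi_{r}$ for the map which, given a finite family of continuous-time trajectories together with an auxiliary family of Brownian excursions, reads off for each trajectory its holding times at the vertices of $K$ and its numbers of crossings of the edges of $K$, decorates the edges of $K$ accordingly as in \eqref{SecCouplComplex: EqIntensity}, and adds up the resulting occupation fields together with the holding times at the vertices, we get
\[
\Big(L^{y}_{\tilde{\tau}_{u}^{n}}(B^{\widetilde{\mathcal{G}}_{n}})\Big)_{y\in \widetilde{\Lambda}_{r}}
=\Phi_{r}\Big(\big\{(X^{n}_{T_{K,j}^{n}+t})_{0\leq t\leq R_{j}^{n}-T_{K,j}^{n}}:j\in J_{u,K}^{n}\big\},\,\Xi_{n}\Big),
\]
where $\Xi_{n}$ is a decoration family independent of the trajectories and with law not depending on $n$. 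By the construction of $\widetilde{\mathcal{I}}^{u}$ from $\mathcal{I}^{u}$ in section \ref{SecInterlacement}, which uses exactly the same decoration, and since the part of an interlacement trajectory preceding its first visit to $K$ never meets $K$, the same identity holds in the limit:
\[
\Big(L^{y}(\widetilde{\mathcal{I}}^{u})\Big)_{y\in \widetilde{\Lambda}_{r}}
=\Phi_{r}\Big(\big\{(w^{K}(t))_{t\geq 0}:w\in \mathcal{I}_{K}^{u}\big\},\,\Xi_{\infty}\Big),
\]
with $\Xi_{\infty}$ independent of the interlacement trajectories and distributed as each $\Xi_{n}$.

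It then remains to pass to the limit inside $\Phi_{r}$. By Lemma \ref{SecInterlacement: LemApproxScheme} the finite family $\{(X^{n}_{T_{K,j}^{n}+t})_{0\leq t\leq R_{j}^{n}-T_{K,j}^{n}}:j\in J_{u,K}^{n}\}$ converges in law to $\{(w^{K}(t))_{t\geq 0}:w\in \mathcal{I}_{K}^{u}\}$; as the decoration families have a fixed law and are independent of the fragments, the pair (fragments, decoration) converges jointly in law, and by the continuous mapping theorem it suffices to check that $\Phi_{r}$ is almost surely continuous at the limiting configuration. For this one uses that, almost surely, the limiting family consists of finitely many trajectories, each hitting $K$, and that the walk being transient ($d\geq 3$) each of them spends only a finite time in the bounded set $K$. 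Consequently, along a Skorokhod coupling realising the convergence almost surely, the number of fragments visiting $K$, the list of visited vertices of $K$ and the crossing counts of the edges of $K$ are eventually constant and equal to those of the limit, while the finitely many holding times at the vertices of $K$ converge. The integer data being eventually frozen, the cross-edge Brownian excursions feeding $\Phi_{r}$ are eventually fixed, so the only input still moving is the collection of intensities, that is the local times at the vertices of $K$, of the Poisson hair clouds; by the second Ray-Knight theorem already invoked in section \ref{SecAltDesc} the occupation field produced by such a cloud is a squared Bessel process in the local-time parameter, hence continuous in it at any value not charged by the (independent) Poisson points, which almost surely covers the limiting local times. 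Continuity of Brownian local times then gives uniform convergence of the total occupation field on the finite metric graph $\widetilde{\Lambda}_{r}$, which is the continuity of $\Phi_{r}$ we need.

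The main obstacle is exactly this last continuity point: one must check that the decoration, in particular the Poissonian hair clouds whose intensity is itself a random local time fluctuating with $n$, depends continuously on the discrete data in a way compatible with the rather weak topology in which Lemma \ref{SecInterlacement: LemApproxScheme} provides convergence, and that no degeneracy (a limiting trajectory meeting $\partial K$ in a non-generic way, a local time equal to one of the excursion marks, and so on) occurs with positive probability. All of these are routine once the one-dimensional facts recalled in sections \ref{SecCouplComplex} and \ref{SecAltDesc} are in place, so the work here is bookkeeping rather than a new idea.
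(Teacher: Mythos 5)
Your proposal is correct and follows essentially the same route as the paper: both occupation fields on $\widetilde{\Lambda}_{r}$ are realised as the same decoration-plus-local-time functional of the discrete trajectory fragments, and Lemma \ref{SecInterlacement: LemApproxScheme} applied to $K=[-r,r]^{d}$ transfers the convergence. The only difference is that you spell out the continuous-mapping/continuity-of-the-decoration step, which the paper asserts without detail.
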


\begin{proof}
The Markov jump process $X^{n}$ on $\mathcal{G}_{n}$ is obtained from the Brownian motion $B^{\widetilde{\mathcal{G}}_{n}}$ through a time change by the inverse of the continuous additive functional
\begin{displaymath}
\sum_{x\in \mathcal{G}_{n}} L^{x}_{t}(B^{\widetilde{\mathcal{G}}_{n}}).
\end{displaymath}
Let $n\geq r+1$. The occupation field $\Big(L^{y}_{\tilde{\tau}_{u}^{n}}(B^{\widetilde{\mathcal{G}}_{n}})\Big)_{y\in \widetilde{\Lambda}_{r}}$ is obtained from the set of discrete-space trajectories
\begin{equation}
\label{SecInterlacement: EqTraj1}
\left\lbrace(X^{n}_{T_{[-r,r]^{d},j}^{n}+t})_{0\leq t\leq R_{j}^{n}-T_{[-r,r]^{d},j}^{n}}\vert 
j\in J_{u,[-r,r]^{d}}^{n}\right\rbrace
\end{equation}
in the same way as the occupation field $\Big(L^{y}(\widetilde{\mathcal{I}}^{u})\Big)_{y\in \widetilde{\Lambda}_{r}}$ is obtained from the trajectories
\begin{equation}
\label{SecInterlacement: EqTraj2}
\lbrace (w^{[-r,r]^{d}}(t))_{t\geq 0}\vert w\in \mathcal{I}_{[-r,r]^{d}}^{u}\rbrace.
\end{equation}
In both cases one adds Brownian excursions and takes the local times. According to Lemma 
\ref{SecInterlacement: LemApproxScheme} applied to the set $[-r,r]^{d}$, \eqref{SecInterlacement: EqTraj1} converges in law to \eqref{SecInterlacement: EqTraj2}. This implies the convergence in law of $\Big(L^{y}_{\tilde{\tau}_{u}^{n}}(B^{\widetilde{\mathcal{G}}_{n}})\Big)_{y\in \widetilde{\Lambda}_{r}}$ to 
$\Big(L^{y}(\widetilde{\mathcal{I}}^{u})\Big)_{y\in \widetilde{\Lambda}_{r}}$.
\end{proof}

\begin{proposition}
\label{SecInterlacement: PropIsomorphismCable}
Let $(\phi_{y})_{y\in\widetilde{\mathbb{Z}}^{d}}$ be the Gaussian free field on the metric graph $\widetilde{\mathbb{Z}}^{d}$ and $(\phi'_{y})_{y\in\widetilde{\mathbb{Z}}^{d}}$ a copy of $\phi$ independent of $\widetilde{\mathcal{I}}_{u}$. The following equality in law holds:
\begin{equation}
\label{SecInterlacement: EqIsomorphismCable}
\Big(L^{y}(\widetilde{\mathcal{I}}^{u})+\dfrac{1}{2}\phi'^{2}_{y}\Big)_{y\in\widetilde{\mathbb{Z}}^{d}}\stackrel{(d)}{=}
\Big(\dfrac{1}{2}(\phi_{y}-\sqrt{2u})^{2}\Big)_{y\in\widetilde{\mathbb{Z}}^{d}}.
\end{equation}
\end{proposition}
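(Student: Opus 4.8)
The plan is to prove \eqref{SecInterlacement: EqIsomorphismCable} first on each finite metric graph $\widetilde{\mathcal{G}}_{n}$, where it is an instance of the generalized second Ray--Knight theorem, and then to pass to the limit $n\to+\infty$ using Lemma \ref{SecInterlacement: LemApproxCable} together with the convergence of the pinned free field on $\widetilde{\mathcal{G}}_{n}$ to the free field on $\widetilde{\mathbb{Z}}^{d}$. On $\widetilde{\mathcal{G}}_{n}$ the Brownian motion $B^{\widetilde{\mathcal{G}}_{n}}$ is a recurrent symmetric Markov process with jointly continuous local times relative to $m$ (Section \ref{SecCouplComplex}); killed at $x_{\ast}$ it becomes transient, with $0$-potential density $G^{\widetilde{\mathcal{G}}_{n}\setminus\lbrace x_{\ast}\rbrace}$ relative to $m$. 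Let $(\phi^{(n)}_{y})_{y\in\widetilde{\mathcal{G}}_{n}}$ be the corresponding Gaussian free field, so $\phi^{(n)}_{x_{\ast}}=0$, taken independent of $B^{\widetilde{\mathcal{G}}_{n}}$. The generalized second Ray--Knight theorem, which applies to $B^{\widetilde{\mathcal{G}}_{n}}$ by the joint continuity of its local times (same framework as in \cite{FitzsimmonsRosen2012LoopsIsomorphism}, \cite{Sznitman2012Isomorphism}), with base point $x_{\ast}$ and level $u$, gives
\begin{equation*}
\Big(L^{y}_{\tilde{\tau}_{u}^{n}}(B^{\widetilde{\mathcal{G}}_{n}})+\tfrac{1}{2}(\phi^{(n)}_{y})^{2}\Big)_{y\in\widetilde{\mathcal{G}}_{n}}\stackrel{(d)}{=}\Big(\tfrac{1}{2}(\phi^{(n)}_{y}+\sqrt{2u})^{2}\Big)_{y\in\widetilde{\mathcal{G}}_{n}},
\end{equation*}
the same field $\phi^{(n)}$ appearing on both sides; fixing $r$ and taking $n\geq r+1$ restricts this to an equality in law of continuous fields on $\widetilde{\Lambda}_{r}$.

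Next I would record the two convergences as $n\to+\infty$, for each fixed $r$. First, by Lemma \ref{SecInterlacement: LemApproxCable}, $\big(L^{y}_{\tilde{\tau}_{u}^{n}}(B^{\widetilde{\mathcal{G}}_{n}})\big)_{y\in\widetilde{\Lambda}_{r}}$ converges in law in $C(\widetilde{\Lambda}_{r})$ to $\big(L^{y}(\widetilde{\mathcal{I}}^{u})\big)_{y\in\widetilde{\Lambda}_{r}}$. Second, for fixed $y,z\in\widetilde{\Lambda}_{r}$ the process $B^{\widetilde{\mathcal{G}}_{n}}$ killed at $x_{\ast}$ agrees with $B^{\widetilde{\mathbb{Z}}^{d}}$ until the latter leaves the copy of $[-n,n]^{d}$ inside $\widetilde{\mathbb{Z}}^{d}$, so by transience of $B^{\widetilde{\mathbb{Z}}^{d}}$ (here $d\geq 3$ is essential) one gets $G^{\widetilde{\mathcal{G}}_{n}\setminus\lbrace x_{\ast}\rbrace}(y,z)\to G(y,z)$; since on a metric graph the free field is obtained from its trace on the locally finite vertex set by affine interpolation along each edge plus an independent Brownian bridge (Section \ref{SecCouplComplex}), this convergence of covariances upgrades to convergence in law in $C(\widetilde{\Lambda}_{r})$ of $(\phi^{(n)}_{y})_{y\in\widetilde{\Lambda}_{r}}$ to $(\phi_{y})_{y\in\widetilde{\Lambda}_{r}}$.

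Passing to the limit: since $\phi^{(n)}$ is independent of $B^{\widetilde{\mathcal{G}}_{n}}$, the pair $\big((L^{y}_{\tilde{\tau}_{u}^{n}}(B^{\widetilde{\mathcal{G}}_{n}}))_{y},(\phi^{(n)}_{y})_{y}\big)$ restricted to $\widetilde{\Lambda}_{r}$ converges in law to $\big((L^{y}(\widetilde{\mathcal{I}}^{u}))_{y},(\phi'_{y})_{y}\big)$ with $\phi'$ a free field on $\widetilde{\mathbb{Z}}^{d}$ independent of $\widetilde{\mathcal{I}}^{u}$. Applying the continuous map $(\ell,f)\mapsto(\ell_{y}+\tfrac{1}{2}f_{y}^{2})_{y}$ shows the left-hand side of the finite identity converges in law to $\big(L^{y}(\widetilde{\mathcal{I}}^{u})+\tfrac{1}{2}\phi^{\prime 2}_{y}\big)_{y\in\widetilde{\Lambda}_{r}}$, while the map $f\mapsto(\tfrac{1}{2}(f_{y}+\sqrt{2u})^{2})_{y}$ shows the right-hand side converges to $\big(\tfrac{1}{2}(\phi_{y}+\sqrt{2u})^{2}\big)_{y\in\widetilde{\Lambda}_{r}}$. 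These two limit fields on $\widetilde{\Lambda}_{r}$ therefore have the same law; letting $r\to+\infty$ gives \eqref{SecInterlacement: EqIsomorphismCable} with $+\sqrt{2u}$, and $+\sqrt{2u}$ may be replaced by $-\sqrt{2u}$ since $\phi\stackrel{(d)}{=}-\phi$.

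The substantial input is Lemma \ref{SecInterlacement: LemApproxCable} — equivalently, the approximation of the interlacement trajectories through $[-r,r]^{d}$ by the excursions of $X^{n}$ (Lemma \ref{SecInterlacement: LemApproxScheme}) — which we take as given. What still needs care is checking that $B^{\widetilde{\mathcal{G}}_{n}}$ really satisfies the hypotheses of the generalized second Ray--Knight theorem (recurrence is immediate; joint continuity of local times is Section \ref{SecCouplComplex}), and that both convergences hold in $C(\widetilde{\Lambda}_{r})$ for every $r$, strongly enough for the continuous quadratic functionals above to be applied in the limit. Transience ($d\geq 3$) is used twice: it is what makes a free field on $\widetilde{\mathbb{Z}}^{d}$ exist at all, and it is what yields $G^{\widetilde{\mathcal{G}}_{n}\setminus\lbrace x_{\ast}\rbrace}\to G$.
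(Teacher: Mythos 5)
Your proposal is correct and follows essentially the same route as the paper: apply the generalized second Ray--Knight theorem to $B^{\widetilde{\mathcal{G}}_{n}}$ on the finite metric graph $\widetilde{\mathcal{G}}_{n}$ with base point $x_{\ast}$, then pass to the limit using Lemma \ref{SecInterlacement: LemApproxCable} for the occupation fields and the convergence in law of the pinned free fields $\phi^{n}$ to $\phi$ (your sign convention $+\sqrt{2u}$ versus $-\sqrt{2u}$ is immaterial by the symmetry $\phi\stackrel{(d)}{=}-\phi$). You merely spell out details the paper leaves implicit (convergence of the Green's functions, joint convergence via independence, and the restriction to $\widetilde{\Lambda}_{r}$ followed by $r\to+\infty$).
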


\begin{proof}
Let $\phi^{n}$ be the Gaussian free field on the metric graph $\widetilde{\mathcal{G}}_{n}$ associated to the Brownian motion with instantaneous killing at $x_{\ast}$ ($\phi^{n}_{x_{\ast}}=0$). Let $\phi'^{n}$ be a copy of $\phi^{n}$ independent of the Brownian motion $B^{\widetilde{\mathcal{G}}_{n}}$ starting from $x_{\ast}$. The second generalized Ray-Knight theorem (see Theorem $8.2.2$ in \cite{MarcusRosen2006MarkovGaussianLocTime}) holds in this setting:
\begin{displaymath}
\Big(L^{y}_{\tilde{\tau}_{u}^{n}}(B^{\widetilde{\mathcal{G}}_{n}})
+\dfrac{1}{2}(\phi'^{n}_{y})^{2}\Big)_{y\in\widetilde{\mathcal{G}}_{n}}\stackrel{(d)}{=}
\Big(\dfrac{1}{2}(\phi^{n}_{y}-\sqrt{2u})^{2}\Big)_{y\in\widetilde{\mathcal{G}}_{n}}.
\end{displaymath}
Since the $\phi^{n}$ converges in law to $\phi$ and according to Lemma \ref{SecInterlacement: LemApproxCable}
$\Big(L^{y}_{\tilde{\tau}_{u}^{n}}(B^{\widetilde{\mathcal{G}}_{n}})\Big)_{y\in\widetilde{\mathcal{G}}_{n}}$ converges in law to $\Big(L^{y}(\widetilde{\mathcal{I}}^{u})\Big)_{y\in \widetilde{\mathbb{Z}}^{d}}$ we get the isomorphism \eqref{SecInterlacement: EqIsomorphismCable}.
\end{proof}

\begin{proof}[Proof of Theorem \ref{ThmLevelSets}]
The coupling is the following: take a discrete-space random interlacement $\mathcal{I}^{u}$ and extend it to a continuous interlacement $\widetilde{\mathcal{I}}^{u}$ of the metric graph $\widetilde{\mathbb{Z}}^{d}$. Take a Gaussian free field $\phi'$ on $\widetilde{\mathbb{\mathbb{Z}}}^{d}$ independent on $\widetilde{\mathcal{I}}^{u}$. Using isomorphism
\eqref{SecInterlacement: EqIsomorphismCable} we see $\Big(L^{y}(\widetilde{\mathcal{I}}^{u})
+\frac{1}{2}\phi'^{2}_{y}\Big)_{y\in\widetilde{\mathbb{Z}}^{d}}$ as 
$\Big(\frac{1}{2}(\phi_{y}-\sqrt{2u})^{2}\Big)_{y\in\widetilde{\mathbb{Z}}^{d}}$ where $\phi$ is a Gaussian free field on $\widetilde{\mathbb{\mathbb{Z}}}^{d}$ and sample the sign of $\phi-\sqrt{2u}$ using its conditional law given $\vert\phi-\sqrt{2u}\vert$.

The continuous occupation field $\Big(L^{y}(\widetilde{\mathcal{I}}^{u})\Big)_{y\in \widetilde{\mathbb{Z}}^{d}}$ is strictly positive on all the vertices and inside the edges visited by the discrete random interlacement $\mathcal{I}^{u}$. In the isomorphism \eqref{SecInterlacement: EqIsomorphismCable}, $(\vert\phi_{y}-\sqrt{2u}\vert)_{y\in\widetilde{\mathbb{Z}}^{d}}$ is strictly positive on these vertices and inside these edges. This means that each trajectory in $\mathcal{I}^{u}$ is contained in a sign cluster of $\phi-\sqrt{2u}$, which is necessarily unbounded. But according Proposition \ref{PropFiniteClusters},
$\phi$ has only bounded sign clusters on the metric graph and \textit{a fortiori} the connected components of 
$\lbrace y\in\widetilde{\mathbb{Z}}^{d}\vert \phi_{y}>\sqrt{2u}\rbrace$ are all bounded. Thus in our coupling all the vertices visited by $\mathcal{I}^{u}$ are contained in $\lbrace y\in\widetilde{\mathbb{Z}}^{d}\vert \phi_{y}<\sqrt{2u}\rbrace$ and since these are vertices, they are contained in $\lbrace x\in \mathbb{Z}^{d}\vert \phi_{x}<\sqrt{2u}\rbrace$.
\end{proof}

The fact that for all $h>0$, $\lbrace x\in \mathbb{Z}^{d}\vert \phi_{x}<h\rbrace$, seen as a dependent site percolation on $\mathbb{Z}^{d}$, has an infinite cluster was proved in \cite{BricmontLebowitzMaes1987PercGFF}. However, Theorem \ref{ThmLevelSets} may be used as an alternative proof of this fact.

\section*{Acknowledgements}
I would like to thank Yves Le Jan and Alain-Sol Sznitman
for fruitful discussions. I would also like to thank the anonymous reviewer for
his detailed and thorough review. 
I would like to thank Hao Wu for pointing out an error in
Formula \eqref{SecCouplComplex: EqLinInterpol} in a previous version,
and
Franco Severo for pointing out that in the proof of
Proposition \ref{PropAltProof}, an argument was missing for the extension to the case of
infinite graph.

\bibliographystyle{plain}
\bibliography{tituslupuabr2}

\end{document}